\numberwithin{equation}{section}
\DeclareRobustCommand{\hvec}[1]{{\mathpalette\hvec@{#1}}}
\newcommand{\hvec@}[2]{%
  \vbox{\offinterlineskip
    \ialign{%
      \hfil##\hfil\cr
      $\m@th#1{}_{\rightharpoonup}$\kern-\scriptspace\cr
      $\m@th#1#2$\cr
    }%
  }%
}
\DeclareMathOperator{\minkdim}{\dim_{\mathbb{M}}}
\DeclareMathOperator{\hausdim}{\dim_{\mathbb{H}}}
\DeclareMathOperator{\lowminkdim}{\underline{\dim}_{\mathbb{M}}}
\DeclareMathOperator{\fordim}{\dim_{\mathbb{F}}}
\DeclareMathOperator{\RR}{\mathbb{R}}
\DeclareMathOperator{\ZZ}{\mathbb{Z}}
\DeclareMathOperator{\QQ}{\mathbb{Q}}
\DeclareMathOperator{\TT}{\mathbb{T}}
\DeclareMathOperator{\CC}{\mathbb{C}}
\DeclareMathOperator{\EE}{\mathbb{E}}
\DeclareMathOperator{\PP}{\mathbb{P}}
\newcommand{\psitwo}[1]{\| {#1} \|_{\psi_2(L)}}
\newtheorem{theorem}{Theorem}
\newtheorem{lemma}[theorem]{Lemma}
\newtheorem{remark}[theorem]{Remark}
\newtheorem*{remarksaboutresults}{Remarks About The Results Stated}
\numberwithin{theorem}{section}
\begin{document}

\centerline{}

\centerline{}

\title[Large Salem Sets Avoiding Nonlinear Configurations]{Large Salem Sets Avoiding Nonlinear Configurations} 
\author[Jacob Denson]{Jacob Denson$^*$}
\address{$^{*}$ University of Madison Wisconsin, Madison, WI, jcdenson@wisc.edu}
\subjclass[2010]{Primary 42A32; Secondary 42A38.}
\keywords{Salem Sets, Fourier Dimension, Pattern Avoidance, Concentration of Measure, Geometric Measure Theory  }

\begin{abstract}
    We obtain a family of results on the construction of large Salem sets avoiding patterns, complementing previous constructions of sets with large Hausdorff dimension avoiding patterns. Given a smooth function $f: (\TT^d)^{n-1} \to \TT^d$ satisfying a modest geometric condition, we prove that a generic Salem subset of $\TT^d$ with dimension $d/(n-3/4)$ avoids nontrivial solutions to the equation $x_n = f(x_1,\dots,x_{n-1})$, which, in particular, implies the existence of large Salem subsets of curves avoiding isosceles triangles. We also show a generic Salem subsets of $\TT^d$ with dimension $d/(n-1)$ avoids nontrivial solutions to any $n$ term linear equation with rational coefficients, extending a result of K\"{o}rner to higher dimensions. We also provide results in the `rough pattern setting', proving that for any set $Z \subset (\TT^d)^n$ formed from a countable union of sets with lower Minkowski dimension at most $s$, and for a generic Salem subset $E$ of $\TT^d$ with dimension $(dn-s) / (n-1/2)$, the Cartesian product $E^n$ only intersects $Z$ at points with non-distinct coordinates.
\end{abstract} \maketitle

\section{Introduction}

Geometric measure theory explores the relationship between the geometry of subsets of $\TT^d$, and analytic properties of Borel measures supported on those subsets. From the perspective of harmonic analysis, it is interesting to explore what geometric information can be gathered from the Fourier analytic properties of these measures, in particular, their Fourier decay. Some results show that the support of measures with Fourier decay must contain certain arrangements of points, such as arithmetic progressions \cite{ChanLabaPramanik, HenriotLabaPramanik,LabaPramanik}. In this paper, we work in the opposite direction, showing that \emph{most} sets supporting measures with a certain type of Fourier decay \emph{do not} contain certain configurations. More precisely, for certain sets $Z \subset (\TT^d)^n$, and $s > 0$, we show that a `generic' compact set $E \subset \TT^d$ supporting a measure $\mu$ such that $|\widehat{\mu}(\xi)| \lesssim |\xi|^{\varepsilon -s/2}$ for any $\varepsilon > 0$ also \emph{avoids} the pattern defined by $Z$, in the sense that for any \emph{distinct} points $x_1,\dots,x_n \in E$, $(x_1,\dots,x_n) \not \in Z$. As an example, one might have $Z = \{ (x_1,\dots,x_n) \in \TT^{dn}: x_n = f(x_1,\dots,x_{n-1}) \}$, in which case a set $E$ avoids $Z$ precisely when, for any distinct points $x_1,\dots,x_n \in E$, $x_n \neq f(x_1,\dots,x_{n-1})$, i.e. $E$ does not contain nontrivial solutions to the equation $x_n = f(x_1,\dots,x_{n-1})$.


A useful statistic associated with any Borel set $E \subset \TT^d$ is its \emph{Fourier dimension}; given a finite Borel measure $\mu$, its Fourier dimension $\fordim(\mu)$ is the supremum of all $s \in [0,d]$ such that $\sup_{\xi \in \ZZ^d} |\widehat{\mu}(\xi)| |\xi|^{s/2} < \infty$. The Fourier dimension of a compact Borel set $E$ is then the supremum of $\fordim(\mu)$, where $\mu$ ranges over all Borel probability measures $\mu$ with $\text{supp}(\mu) \subset E$. A particularly tractable family of sets in this scheme are \emph{Salem sets}, sets whose Fourier dimension agrees with their Hausdorff dimension. Often constructions in the pattern avoidance literature focus on constructing sets with large Hausdorff dimension which avoid patterns \cite{OurPaper,PramanikFraser,Mathe}, but it is not necessarily possible to use the same techniques to construct large Salem sets avoiding patterns, or even to construct sets with positive Fourier dimension, since having large Fourier dimension is often a much stronger property than having large Hausdorff dimension. Nonetheless, in this paper we are able to provide techniques proving the existence of large Salem pattern avoiding sets.

Our paper is part of a growing body of literature on \emph{pattern avoidance problems}: given a set $Z \subset \TT^{dn}$, the pattern avoidance problem for $Z$ asks to construct a \emph{pattern avoiding} set $E \subset \TT^d$, a set such that for any \emph{distinct} $x_1,\dots,x_n \in E$, $(x_1,\dots,x_n) \not \in Z$, which is as large as possible with respect to some particular statistic relevant to the problem, such as the Hausdorff or Fourier dimension. The main inspiration for the results of this paper were the results of \cite{OurPaper} on `rough' patterns, which constructed, for any set $Z \subset \TT^{dn}$ formed from the countable union of compact sets with lower Minkowski dimension at most $\alpha$, a set $E$ avoiding $Z$ with
\begin{equation} \label{HausdorffDimensionBoundObtainedinOurPaper}
    \hausdim(E) = \max\left( \frac{dn - \alpha}{n - 1}, d \right).
\end{equation}
While the sets $E$ constructed using this method are not guaranteed to be Salem, the techniques in the method (an iterative random selection procedure) seemed amenable to produce Salem sets, since random methods are often very useful for construction Salem sets (randomly constructed functions are often uncorrelated with oscillation at any particular frequency, and thus tend to have good Fourier decay properties). Our goal was thus to modify the construction of \cite{OurPaper} in order to ensure the resulting sets constructed were Salem. The general baseline in the setting of Salem sets was Theorem 38 of \cite{MyThesis}, which constructed a Salem set $E$ avoiding $Z$ with
\begin{equation} \label{FourierDimensionBoundFromMyTHesis}
    \fordim(E) = \max \left( \frac{dn - \alpha}{n}, d \right).
\end{equation}
The natural conjecture is thus whether the $n$ in the denominator of \eqref{FourierDimensionBoundFromMyTHesis} can be replaced with an $n-1$, like in \eqref{HausdorffDimensionBoundObtainedinOurPaper}. In this paper, we are only able to construct Salem sets with dimension matching the bound in \eqref{HausdorffDimensionBoundObtainedinOurPaper} when $Z$ exhibits a weak translation symmetry, as detailed in Theorem \ref{thirdTheorem} of this paper, but for more general sets we are still able to improve upon the dimension given by \eqref{FourierDimensionBoundFromMyTHesis}, as detailed in Theorems \ref{maintheorem} and \ref{theoremJOICVIOJVI122}.

The methods in this paper are generic, in the sense of the Baire category theorem; we define a complete metric space $\mathcal{X}_\beta$ for each $\beta \in (0,d]$, which consists of all pairs $(E,\mu)$, where $E$ is a compact set, and $\mu$ is a Borel probability measure supported on $E$ with $\fordim(\mu) \geq \beta$. We then show that for an appropriate choice of $\beta$, the family of all pairs $(E,\mu) \in \mathcal{X}_\beta$ such that $E$ is Salem and avoids a pattern is \emph{comeager}, or \emph{generic} in $\mathcal{X}_\beta$ (the complement of a set of first category). In particular, the Baire category theorem then guarantees that there exists many sets $E$ with $\fordim(E) = \beta$ which avoid a given pattern.

Many other approaches \cite{OurPaper,PramanikFraser,Keleti} to the pattern avoidance problem construct large pattern avoiding sets explicitly, exchanging nonconstructive Baire category type methods for various constructive \emph{queuing techniques}. The approaches in this paper can be modified to give a constructive queuing argument for the existence of sets with the properties guaranteed in Theorems \ref{maintheorem}, \ref{theoremJOICVIOJVI122}, and \ref{thirdTheorem}. But using Baire category techniques allows us to avoid the technical numerology that goes into queuing arguments, so that we can focus on the more novel aspects of our analysis.



Let us now introduce the three primary results of this paper. Theorem \ref{maintheorem} has the weakest conclusions and it's proof relies on a small modifications of previous techniques that exist in the literature, e.g. in \cite{Korner1}. But the theorem works for the most general family of patterns, and it's proof acts as a good foundation on which to build the more novel and powerful theorems in this paper.

\begin{theorem} \label{maintheorem}
    Fix $0 \leq \alpha < dn$, and let $Z \subset \TT^{dn}$ be a compact set with lower Minkowski dimension at most $\alpha$. Set
    \[ \beta_0 = \min \left( \frac{dn - \alpha}{n-1/2}, d \right). \]
    Then there exists a compact Salem set $E \subset \TT^d$ with $\fordim(E) = \beta_0$, such that for any distinct points $x_1, \dots, x_n \in E$, $(x_1, \dots, x_n) \not \in Z$. Moreover, if $\beta \leq \beta_0$, then the family of all pairs $(E,\mu) \in \mathcal{X}_\beta$ such that $E$ is Salem and avoids the pattern generated by $Z$ is comeager.
\end{theorem}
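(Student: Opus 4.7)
The plan is to apply the Baire category theorem in $\mathcal{X}_\beta$ for each $\beta \leq \beta_0$. Since every pair $(E,\mu) \in \mathcal{X}_\beta$ satisfies $\fordim(E) \geq \fordim(\mu) \geq \beta$ by the definition of $\mathcal{X}_\beta$, it suffices to show that two further properties are comeager: (I) the upper bound $\hausdim(E) \leq \beta$, which combined with the built-in Fourier lower bound forces the chain $\beta \leq \fordim(E) \leq \hausdim(E) \leq \beta$ and hence that $E$ is Salem with $\fordim(E) = \beta$; and (II) the pattern-avoidance property that no $n$-tuple of distinct points in $E$ lies in $Z$. Property (I) is standard: for each $s > \beta$ and $M \in \mathbb{N}$, the set $\{(E,\mu) : \mathcal{H}^s_\infty(E) > 1/M\}$ is closed, and any pair admits arbitrarily small perturbations whose support is contained in a finite union of balls with prescribed small $s$-dimensional content.

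The core of the proof is property (II). Define
\[ \mathcal{P}_\varepsilon = \{(E,\mu) \in \mathcal{X}_\beta : \text{no } \varepsilon\text{-separated } n\text{-tuple in } E \text{ lies in } Z\}. \]
Each $\mathcal{P}_\varepsilon$ is open in $\mathcal{X}_\beta$ by compactness of $Z$, and the avoidance condition equals $\bigcap_k \mathcal{P}_{1/k}$; it therefore suffices to prove each $\mathcal{P}_\varepsilon$ is dense. Given $(E_0,\mu_0) \in \mathcal{X}_\beta$ and a small neighbourhood $U$, I would construct a perturbation in $U \cap \mathcal{P}_\varepsilon$ by a randomised discretise-and-delete scheme: fix a scale $\delta \ll \varepsilon$, cover $Z$ by $N_\delta \lesssim \delta^{-\alpha}$ cubes of side $\delta$, and modify $\mu_0$ by a random Rademacher-type multiplier $\phi_\omega$ adapted to scale $\delta$, giving $\mu = \phi_\omega \mu_0 / \int \phi_\omega\, d\mu_0$. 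The support of $\mu$ is a random sparsification of $E_0$ designed so that, with high probability, every cube in the cover contributing an $\varepsilon$-separated configuration of $Z$ is missed.

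The principal obstacle is a two-sided quantitative control on this random perturbation. On one hand, the expected mass of $\mu^n$ on $\varepsilon$-separated configurations in a $\delta$-neighbourhood of $Z$ must tend to $0$ as $\delta \to 0$; on the other, the Fourier-decay bound $\fordim(\mu) \geq \beta$ must be maintained with high probability. For the mass bound, one sums a cube estimate over the $N_\delta \lesssim \delta^{-\alpha}$ cubes of the cover. The crucial cube estimate couples $n-1$ pointwise applications of $|\widehat{\mu}(\xi)| \lesssim |\xi|^{-\beta/2}$ with a single $L^2$ Plancherel estimate on the remaining coordinate; the $\varepsilon$-separation of the configurations is what allows the $n$ coordinates to be decoupled in this way. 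The total contribution is a positive power of $\delta$ precisely when $\beta \leq \beta_0 = (dn-\alpha)/(n-1/2)$, with the denominator $n - 1/2$ recording $n-1$ full factors of pointwise Fourier decay plus one half-factor from the $L^2$ step. Preservation of $\fordim(\mu) \geq \beta$ is then obtained by a separate concentration argument applied frequency by frequency to $\widehat{\mu} - \widehat{\mu_0}$, exploiting the independence of the signs in $\phi_\omega$. Chebyshev's inequality then selects a deterministic $\omega$ for which $\mu \in U \cap \mathcal{P}_\varepsilon$, concluding the density argument.
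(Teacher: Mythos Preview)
Your global architecture matches the paper: write the avoidance condition as $\bigcap_k \mathcal{P}_{1/k}$ with each $\mathcal{P}_\varepsilon$ open (the paper calls this $B(Z,s)$), prove each is dense, and intersect with the generic Salem property. That part is fine.

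The gap is in your density argument for $\mathcal{P}_\varepsilon$, which diverges from the paper and, as written, does not close. The paper does \emph{not} perturb a given $\mu_0$ by a random sign multiplier. Instead, via an intermediate reduction (Lemma~\ref{LemmaFIOAJFOIWJ}), density follows once one can exhibit, for each $\lambda<\beta$, a finite set $S=\{x_1,\dots,x_M\}\subset\TT^d$ with $M\sim r^{-\lambda}$ such that $S_r\in\mathcal{P}_\varepsilon$ and the normalized exponential sums are $O(M^{-1/2}\log M)$ for all $|\xi|\le r^{-(1+\kappa)}$. To build $S$ the paper throws $M$ i.i.d.\ uniform points in $\TT^d$ and \emph{deletes} every $x_{k_n}$ that participates in an $n$-tuple within $O(r)$ of $Z$. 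The expected number of bad tuples is $\lesssim M^n r^{dn-\alpha}$, and this is $\lesssim M^{1/2}$ precisely when $\lambda(n-\tfrac12)\le dn-\alpha$. The $\tfrac12$ therefore comes from the fact that one can afford to discard up to $M^{1/2}$ points and still control the deleted part of the exponential sum by the \emph{triangle inequality} without destroying the square-root cancellation (from Hoeffding) on the full sum. No Fourier or energy estimate on $Z_\delta$ is used.

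Your proposed mechanism has several problems. A ``Rademacher-type'' ($\pm1$-valued) multiplier does not change $\text{supp}(\mu_0)$ at all, so it cannot produce avoidance; if you mean $0/1$ Bernoulli selectors on $\delta$-cubes you should say so, and then avoidance becomes a combinatorial deletion event rather than a mass bound. Your ``crucial cube estimate'' is aimed at $\EE[\mu^{\otimes n}(Z_\delta)]\to 0$, but small mass on $Z_\delta$ does not imply that $\text{supp}(\mu)^n$ misses $Z$. Finally, the claimed numerology ``$(n-1)$ pointwise Fourier bounds $+$ one $L^2$ step $\Rightarrow$ denominator $n-\tfrac12$'' does not check out: the pointwise decay $|\widehat{\mu}(\xi)|\lesssim|\xi|^{-\beta/2}$ only gives $\mu(B_\delta)\lesssim\delta^{\beta/2}$, not $\delta^{\beta}$, so $n-1$ such factors contribute $\delta^{(n-1)\beta/2}$, and it is unclear what ``half-factor from the $L^2$ step'' you have in mind or how summing over the $\delta^{-\alpha}$ cubes then yields the threshold $\beta\le(dn-\alpha)/(n-\tfrac12)$. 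The paper's $\tfrac12$ has a different, purely counting-based origin as described above; to repair your argument you would need to replace the mass/energy heuristic by an explicit random-point-and-delete construction together with the trivial bound on the deleted exponential sum.
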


The remaining two results give a more novel analysis, improving upon the result of Theorem \ref{maintheorem} when the pattern $Z$ satisfies additional regularity conditions. In these latter theorems, we focus on patterns specified by equations of the form
\[ x_n = f(x_1,\dots,x_{n-1}), \]
i.e. where $Z = \{ (x_1,\dots,x_n): x_n = f(x_1,\dots,x_{n-1}) \}$, and our assumptions will be on structure of the function $f$. Under the assumption that $f$ is smooth, and satisfies a regularity condition geometrically equivalent to the graph of $f$ being transverse to any axis-oriented $(n-2) \times d$ dimensional plane in $(\RR^d)^{n-1}$, we are able to improve the Fourier dimension bound obtained using previous construction techniques, though not quite enough to match the Hausdorff dimension bound obtained in \cite{PramanikFraser}, except in the fairly trivial case where $n = 2$.

\begin{theorem} \label{theoremJOICVIOJVI122}
    Consider a smooth function $f: V \to \TT^d$, where $V$ is an open subset of $\TT^{d(n-1)}$, such that for each $k \in \{ 1, \dots, n-1 \}$, the matrix
    \[ D_{x_k} f(x_1,\dots,x_{n-1}) = \begin{pmatrix} \frac{\partial f_i}{\partial (x_k)_j} \end{pmatrix}_{1 \leq i,j \leq d} \]
    is invertible whenever $x_1,\dots,x_{n-1}$ are distinct and $(x_1,\dots,x_{n-1}) \in V$. Then there exists a compact Salem set $E \subset \TT^d$ with dimension
    \[ \beta_0 = \begin{cases} d &: n = 2 \\ d/(n - 3/4) &: n \geq 3 \end{cases} \]
    such that for any distinct points $x_1, \dots, x_n \in E$, with $x_1,\dots,x_{n-1} \in V$,
    \[ x_n \neq f(x_1,\dots,x_{n-1}). \]
    Moreover, if $\beta \leq \beta_0$, then the family of pairs $(E,\mu) \in \mathcal{X}_\beta$ such that $E$ is Salem and does not contain any solutions to the equation $x_n = f(x_1,\dots,x_{n-1})$ is comeager.
\end{theorem}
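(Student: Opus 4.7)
The plan is to apply the Baire category framework announced in the introduction. Fix $\beta \leq \beta_0$ and work in the complete metric space $\mathcal{X}_\beta$. For each $\epsilon > 0$, define
\[
    B_\epsilon = \bigl\{ (E,\mu) \in \mathcal{X}_\beta : \text{there exist distinct } x_1,\dots,x_n \in E, \; \min_{i \neq j} |x_i - x_j| \geq \epsilon, \; x_n = f(x_1,\dots,x_{n-1}) \bigr\}.
\]
Continuity of $f$ together with compactness of supports makes each $B_\epsilon$ closed, so the pairs whose $E$ admits a nontrivial solution form the $F_\sigma$ set $\bigcup_{k \geq 1} B_{1/k}$. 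The theorem follows once I show each $B_\epsilon$ has empty interior; the Salem property $\fordim(E) = \hausdim(E) = \beta_0$ then drops out of generic membership in $\mathcal{X}_\beta$.

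Given $(E,\mu)$ in an open neighborhood $U \subset \mathcal{X}_\beta$, I would construct a random perturbation $(E_N,\mu_N) \in U$ that lies outside $B_\epsilon$ with positive probability. Partition $\TT^d$ into cubes $\{Q\}$ of side $N^{-1}$, attach independent centered sub-Gaussian random weights $\xi_Q$ with $\EE \xi_Q = \mu(Q)$ and $\psitwo{\xi_Q} \lesssim \mu(Q)$, and let $\mu_N$ be the random absolutely continuous measure whose density on each $Q$ is a smooth bump of total mass $\xi_Q$. For $N$ large, $\mu_N \in U$. The Fourier decay bound $|\widehat{\mu_N}(\xi)| \lesssim |\xi|^{-\beta/2}$ with probability tending to $1$ follows by applying a Hoeffding-type sub-Gaussian concentration inequality, keyed to $\psitwo{\cdot}$, to the independent sum $\widehat{\mu_N}(\xi) - \widehat{\mu}(\xi)$, and union-bounding over $\xi \in \ZZ^d$ in a dyadic range.

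The main obstacle is the pattern count estimate, which is what forces the exponent $\beta_0 = d/(n - 3/4)$. Writing $Z$ for the graph of $f$ and $Z_\delta$ for its $\delta$-thickening, I aim to prove
\[
    \EE\Bigl[ \mu_N^{\otimes n}\bigl( Z_\delta \cap \{\min_{i \neq j} |x_i - x_j| \geq \epsilon\} \bigr) \Bigr] = o(1)
\]
whenever $\beta < \beta_0$. The assumption that each $D_{x_k} f$ is invertible at distinct points means $Z$ is a smooth $d(n-1)$-dimensional submanifold of $\TT^{dn}$ transverse to every axis-parallel $d$-block; by itself this combinatorial transversality only recovers the baseline $\beta \leq d/(n - 1/2)$ of Theorem~\ref{maintheorem} via an incidence count of pattern-hosting cube tuples. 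The additional $1/4$ improvement requires the full $C^2$ structure of $f$: after Fourier inverting the indicator of $Z_\delta$ and integrating one distinguished variable $x_k$ against $\widehat{\mu_N}$, the invertibility of $D_{x_k} f$ makes the resulting phase nondegenerate, so a stationary-phase estimate produces an extra factor of $N^{-d/4}$ beyond the merely Lipschitz bound; a single Cauchy--Schwarz step applied to the remaining variables then closes the estimate. The Borel--Cantelli lemma produces a realization of the $\xi_Q$ delivering $(E_N,\mu_N) \in U \setminus B_\epsilon$ with $\fordim(\mu_N) \geq \beta$, proving $B_\epsilon$ is nowhere dense and completing the theorem.
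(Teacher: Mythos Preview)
Your Baire-category scaffolding (the closed sets $B_\epsilon$, showing each has empty interior) matches the paper's framework, but the probabilistic construction you sketch does not deliver what the argument needs, and your explanation of the exponent $d/(n-3/4)$ misidentifies its source.

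First, a structural problem: pattern avoidance is a statement about the \emph{support} of $\mu_N$, not about the number $\mu_N^{\otimes n}(Z_\delta)$. If your weights $\xi_Q$ are positive on every cube (which, with $\EE\xi_Q = \mu(Q)>0$, is the generic behaviour of any reasonable sub-Gaussian model), then $\text{supp}(\mu_N) = \text{supp}(\mu)$ and the support contains many solutions $x_n = f(x_1,\dots,x_{n-1})$, regardless of how small $\mu_N^{\otimes n}(Z_\delta)$ is. The paper avoids this by a genuinely different mechanism: it selects $M\approx r^{-\lambda}$ random points $X_i(k)$ in disjoint cubes $Q_1,\dots,Q_n$, \emph{deletes} from the last block those points $X_n(k_n)$ that fall within $O(r)$ of some $f(X_1(k_1),\dots,X_{n-1}(k_{n-1}))$, and then mollifies the surviving point masses at scale $r$. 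Set-theoretic avoidance is thus built in by construction, and the entire analytic burden shifts to controlling the exponential sum
\[
H(\xi) = \sum_{k \in I} e^{2\pi i \xi \cdot X_n(k)}
\]
over the deleted indices $I$. A two-stage McDiarmid argument (first conditioning on $X_1,\dots,X_{n-1}$, then varying them) shows $H(\xi)$ concentrates within $O(M^{1/2})$ of its mean; all the work is in bounding $\EE(H(\xi))$.

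Second, the $3/4$ does not come from a stationary-phase gain. The phase $\xi \cdot f(x_1,\dots,x_{n-1})$ has no critical points because $Df$ is surjective, so integration by parts gives arbitrary polynomial decay $|\xi|^{-T}$ for the main term---that part is free. What actually forces $\beta \leq d/(n-3/4)$ is the \emph{error} incurred when approximating $\PP(k_n \in I \mid X_n(k_n)=x)$ by a linearised expression in $x$. The paper writes $f^{-1}(B_r(x))$ via a Whitney decomposition into cubes, approximates the probability of the union by the sum of probabilities, and bounds the second-order inclusion-exclusion overlaps by exploiting the transversality of level sets of $f$ to axis hyperplanes (this is exactly where the invertibility of each $D_{x_k}f$ enters). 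That overlap sum comes out as $O(M^{2(n-1)} r^{2d})$, and the requirement $M^{2(n-1)} r^{2d} \lesssim M^{-1/2}$ is precisely $\lambda \leq d/(n-3/4)$. Your ``single Cauchy--Schwarz plus $N^{-d/4}$ oscillatory gain'' does not reproduce this combinatorial calculation, and no stationary-phase Hessian is ever invoked.
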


Finally, we consider patterns defined by equations with some translational symmetry. Here we can construct Salem sets with dimension exactly matching the Hausdorff dimension results obtained in \cite{OurPaper}. The simplest example of such a pattern is that specified by a family of equations of the form $m_1x_1 + \dots + m_nx_n = s$, where at least two of the integers $m_1,\dots,m_n \in \ZZ$ is nonzero, and $s$ ranges over a low dimension set in $\TT^d$. But we can also consider more nonlinear patterns, such as those formed by solutions to an equation
\[ x_n - a x_{n-1} = f(x_1,\dots,x_{n-2}) \]
for a non-zero rational number $a$, and a locally Lipschitz function $f$. Even in the case where $f$ is linear, this theorem gives new results.

\begin{theorem} \label{thirdTheorem}
    Fix $d(n-1) \leq \alpha < dn$, a non-zero rational number $a$, and a locally Lipschitz function $f: V \to \RR$, where $V$ is an open subset of $\TT^{d(n-2)}$. Suppose $F \subset \RR$ is a compact set with lower Minkowski dimension at most $\alpha - d(n-1)$. Set
    \[ \beta_0 = \frac{dn - \alpha}{n - 1}. \]
    Then there exists a compact Salem set $E \subset \TT^d$ with $\fordim(E) = \beta_0$ such that for any distinct points $x_1,\dots,x_n \in E$, with $(x_1,\dots,x_{n-2}) \in V$,
    \[ x_n - a x_{n-1} - f(x_1,\dots,x_{n-2}) \not \in F. \]
    Moreover, if $\beta \leq \beta_0$, then the family of all pairs $(E,\mu) \in \mathcal{X}_\beta$ such that $E$ is Salem and does not contain distinct points satisfying the equation above is comeager in $\mathcal{X}_\beta$.
\end{theorem}

    %
    %
    %

    %
    %

Under the assumptions of Theorem \ref{thirdTheorem}, the set
\[ Z = \{ (x_1,\dots,x_n) \in \TT^d: x_n - ax_{n-1} - f(x_1,\dots,x_{n-2}) \in F \} \]
has lower Minkowski dimension $\alpha$. Observing the value $\beta_0$ in that Theorem, we see that in this setting, we have attained precisely the dimension bound obtained in \cite{OurPaper}, but now in the setting of Salem sets rather than just Hausdorff dimension, though restricted to a more specialized set of patterns.

%
%
%
%

\begin{remarksaboutresults}
    \ 
    \begin{enumerate}[leftmargin=*]
        \item[1.] Because we are using Baire category techniques, the results we obtain remain true when, instead of avoiding a single pattern, we avoid a countable family of patterns. This is because the countable union of sets of first category is a set of first category. As an example of this property, we note that the conclusion of Theorem \ref{maintheorem} holds when $Z$ is replaced by a \emph{countable union} of compact sets, each with lower Minkowski dimension at most $\alpha$. Similar generalizations apply to Theorems \ref{theoremJOICVIOJVI122} and \ref{thirdTheorem}.

        \item[2.] It is quite surprising that we are able to generically improve the bound \eqref{FourierDimensionBoundFromMyTHesis} in the sense of the Baire category theorem. In \cite{Schmerkin2}, for each $\beta$, a natural probability measure, the \emph{fractal percolation}, is constructed on the space of all $\beta$-dimensional subsets of $\TT^d$, and the question of for which $\beta$ a fractal percolation almost surely contains a given pattern is studied. Such fractal percolations will almost surely be Salem sets, which connects such questions to our problem. The patterns they study in their paper are slightly different from the regimes of this paper under which sharper results are obtained, i.e. as given in Theorems \ref{theoremJOICVIOJVI122} and \ref{thirdTheorem}. But even using the more general result of Theorem \ref{maintheorem} we see that the parameters under which a random $\beta$ dimensional set almost surely contains a pattern behave quite differently to the parameters under which a generic $\beta$ dimensional set in $\chi_\beta$ contains a pattern.

        More precisely, consider the set $Z \subset \TT^{dn}$ defined to be the family of all translations and dilations of a fixed tuple of points $(y_1,\dots,y_n) \in \TT^{dn}$. Then $Z$ has dimension $d+1$, which we will label $\alpha$. Theorem 1.1 of \cite{Schmerkin2} implies that for any $\beta > (dn - \alpha)/n$, a random fractal percolation $E \subset \TT^d$ will almost surely contain points $x_1,\dots,x_n \in E$ such that $(x_1,\dots,x_n) \in Z$. On the other hand, Theorem \ref{maintheorem} of this paper implies that for any $\beta \leq (dn - \alpha)/(n-1/2)$, a generic element of $\mathcal{X}_\beta$ \emph{does not} contain any instances of the pattern specified by $Z$. Thus the theorems in this paper imply that generic pattern avoidance in a Baire category sense differs from generic pattern avoidance in a probabilistic sense, at least in the range $(dn - \alpha)/n < \beta \leq (dn - \alpha)/(n - 1/2)$.

        \item[3.] If $Z \subset \TT^{dn}$ is a compact set with lower Minkowski dimension $\alpha$ with $0 \leq \alpha < d$, then for any $1 \leq i \leq d$, the set $\TT^d - \pi_i(Z)$ has full Hausdorff dimension $d$, where $\pi_i: \TT^{dn} \to \TT^d$ is given by $\pi_i(x_1,\dots,x_n) = x_i$. Thus the pattern avoidance problem is trivial in this case for Hausdorff dimension. This is no longer true when studying Fourier dimension, since $\TT^d - \pi_i(Z)$ need not be a Salem set, nor even have particularly large Fourier dimension, making the results of this paper still interesting in this range.

        That this is true is hinted at in Example 8 of \cite{Ekstrom2014}, where it is shown that there exists a set $X \subset \TT$ which is the countable union of compact sets $\{ X_k \}$, with $\sup_k \minkdim(X_k) \leq 3/4$, such that $\fordim(\TT - X) \leq 3/4$. Thus $\TT - X$ is not a Salem set, since $\TT$ has Hausdorff dimension one. The pattern
        \[ Z = \bigcup_{i = 0}^{n-1} \mathbf{Q}^i \times X \times \mathbf{Q}^{n-i-1}, \]
        is a countable union of compact sets, each with Minkowski dimension at most $3/4$. On the other hand, for each $i \in \{ 1, \dots, n \}$, we find that
        \[ \fordim(\TT - \pi_i(Z)) \leq \fordim(\TT - X) \leq 3/4. \]
        Thus the trivial solution obtained by removing a projection of $Z$ onto a particular coordinate axis does not necessarily give a pattern avoiding set with optimal Fourier dimension in this setting. Applying Theorem \ref{maintheorem} naively to the pattern $Z$ shows that a generic Salem set $E \subset \TT$ of dimension $(n-3/4)/(n-1/2)$ avoids $Z$, which exceeds the dimension of the trivial construction for all $n > 1$. In fact, a generic Salem set $E \subset \TT$ of dimension $1$ will avoid $Z$, since any subset of $\TT - X$ will avoid $Z$, and Theorem \ref{maintheorem} applied with $Z = X$ implies that a generic Salem set $E$ of dimension 1 will be contained in $\TT - X$.



        %
        %


        \item[4.] If $n = 2$, the problem of avoiding solutions to the equation $y = f(x)$ for a continuous function $f: V \to \TT^d$ is essentially trivial. If there exists $x \in \TT^d$ such that $f(x) \neq x$, there there exists an open set $U$ around $x$ such that $U \cap f(U) = \emptyset$. Then $U$ has full Fourier dimension, and avoids solutions to the equation $y = f(x)$. On the other hand, if $f(x) = x$ for all $x$, then there are no distinct $x$ and $y$ in $[0,1]$ such that $y = f(x)$, and so the problem is also trivial. But it is a less trivial to argue that a \emph{generic} set with full Fourier dimension avoids this pattern, which is proved in Theorem \ref{theoremJOICVIOJVI122}, so we still obtain nontrivial information in this case.

        \item[5.] Working on patterns on $\RR^d$ is not significantly different from working over $\TT^d$. For our purposes, the latter domain has several notational advantages, which is why in this paper we have chosen to work with the pattern avoidance pattern in this setting. But there is no theoretical obstacle in applying the techniques described here to prove the existence of pattern avoiding sets in $\RR^d$. Let us briefly describe how this can be done. Given a finite Borel measure $\mu$ on $\RR^d$, we define the Fourier dimension $\fordim(\mu)$ of $\mu$ to be the supremum of all $s \in [0,d]$ such that $\sup_{\xi \in \RR^d} |\widehat{\mu}(\xi)| |\xi|^{s/2} < \infty$, and define $\fordim(E)$ for a Borel set $E \subset \RR^d$ to be the supremum of the quantities $\fordim(\mu)$, taken over finite Borel measures $\mu$ supported on $E$. It is a simple consequence of the Poisson summation formula that if $\mu$ is a compactly supported finite measure on $\RR^d$, and we consider the \emph{periodization} $\mu^*$ of $\mu$, i.e. the finite Borel measure on $\TT^d$ such that for any $f \in C(\TT^d)$,
    \begin{equation}
        \int_{\TT^d} f(x)\; d\mu^*(x) = \int_{\RR^d} f(x)\; d\mu(x),
    \end{equation}
    then $\fordim(\mu^*) = \fordim(\mu)$. A proof is given in Lemma 39 of \cite{MyThesis}. Since $\mu$ is compactly supported, it is also simple to see that $\hausdim(\mu^*) = \hausdim(\mu)$. It follows that if $E$ is a compact subset of $[0,1)^d$, and $\pi: [0,1)^d \to \TT^d$ is the natural projection map, then $\fordim(E) = \fordim(\pi(E))$ and $\hausdim(E) = \hausdim(\pi(E))$. These results therefore imply we can reduce the study of patterns on $\RR^{dn}$ to patterns on $\TT^{dn}$, and thus obtain analogous results to Theorems \ref{maintheorem}, \ref{theoremJOICVIOJVI122}, and \ref{thirdTheorem} for the construction of Salem sets avoiding patterns in $\RR^d$, where every instance of $\TT$ in the statement of those theorems is replaced by $[0,1]$.
    \end{enumerate}
\end{remarksaboutresults}

\section{Notation} \label{notationSection}

\begin{itemize}[leftmargin=*]

    \item Given a metric space $X$, a point $x \in X$, and a positive number $\varepsilon > 0$, we let $B_\varepsilon(x)$ denote the open ball of radius $\varepsilon$ around $x$. For $x \in X$, we let $\delta_x$ denote the Dirac delta measure at $x$. For a set $E \subset X$ and $\varepsilon > 0$, we let $N(E,\varepsilon) = \bigcup_{x \in E} B_\varepsilon(x)$ denote the \emph{$\varepsilon$-neighborhood} of the set $E$. For two sets $E_1,E_2 \subset X$, we let
    \[ d(E_1,E_2) = \inf \{ d(x_1,x_2) : x_1 \in E_1, x_2 \in E_2 \}, \]
    and then define the \emph{Hausdorff distance}
    \[ d_{\mathbb{H}}(E_1,E_2) = \max \left( \sup_{x_1 \in E_1} d(x_1,E_2), \sup_{x_2 \in E_2} d(E_1,x_2) \right). \]
    If $d_{\mathbb{H}}(E_1,E_2) < \varepsilon$, then $E_1 \subset N(E_2,\varepsilon)$ and $E_2 \subset N(E_1,\varepsilon)$, and the Hausdorff distance can be described as the infinum of such $\varepsilon$.

    \item A subset of a metric space $X$ is of \emph{first category}, or \emph{meager} in $X$ if it is the countable union of closed sets with empty interior, and is \emph{comeager} if it is the complement of such a set (a countable intersection of open, dense sets). We say a property holds \emph{quasi-always}, or a property is \emph{generic} in $X$, if the set of points in $X$ satisfying that property is comeager. The Baire category theorem then states precisely that any comeager set in a complete metric space is dense.

    \item We let $\TT^d = \RR^d/\ZZ^d$. Given $x \in \TT$, we let $|x|$ denote the minimal absolute value of an element of $\RR$ lying in the coset of $x$. For $x \in \TT^d$, we let $|x| = \sqrt{|x_1|^2 + \dots + |x_d|^2}$. The canonical metric on $\TT^d$ is then given by $d(x,y) = |x - y|$, for $x,y \in \TT^d$.

    For an axis-oriented cube $Q$ in $\TT^d$, and some $t > 0$, we let $tQ$ be the axis-oriented cube in $\TT^d$ with the same center and $t$ times the sidelength.

    We say a family of subsets $\mathcal{A}$ of $\TT^d$ is \emph{downward closed} if, whenever $E \in \mathcal{A}$, any subset of $E$ is also an element of $\mathcal{A}$. The quintessential downward closed family for our purposes, given a set $Z \subset \TT^{dn}$, is the collection of all sets $E \subset \TT^d$ that avoid the pattern $Z$, i.e. such that for any distinct points $x_1,\dots,x_n \in E$, $(x_1,\dots,x_n) \not \in Z$.

    \item For $\alpha \in [0,d]$ and $\delta > 0$, the $(\alpha,\delta)$ \emph{Hausdorff content} of a Borel set $E \subset \TT^d$ is
    \[ H^\alpha_\delta(E) = \inf \left\{ \sum_{k = 1}^\infty \varepsilon_k^\alpha : E \subset \bigcup_{k = 1}^\infty B_{\varepsilon_k}(x_k)\ \text{and $0 < \varepsilon_k \leq \delta$ for all $k \geq 1$} \right\}. \]
    The $\alpha$ dimensional Hausdorff measure of $E$ is equal to
    \[ H^\alpha(E) = \lim_{\delta \to 0} H^\alpha_\delta(E). \]
    The Hausdorff dimension $\hausdim(E)$ of a Borel set $E$ is then the supremum over all $s \in [0,d]$ such that $H^s(E) = 0$.

    %
    %


    For a measurable set $E \subset \TT^d$, we let $|E|$ denote its Lebesgue measure. We define the lower Minkowski dimension of a compact Borel set $E \subset \TT^d$ as
    \[ \lowminkdim(E) = \liminf_{r \to 0} d - \log_r|N(E,r)|. \]
    Thus $\lowminkdim(E)$ is the largest number such that for $\alpha < \lowminkdim(E)$, there exists a decreasing sequence $\{ r_i \}$ with $\lim_{i \to \infty} r_i = 0$ and $|N(E,r_i)| \leq r_i^{d - \alpha}$ for each $i$.

    \item At several points in this paper we will need to employ probabilistic concentration bounds. In particular, we use \emph{McDiarmid's inequality}. Let $S$ be a set, let $\{ X_1, \dots, X_N \}$ be an independent family of $S$-valued random variables, and consider a function $f: S^N \to \CC$. Suppose that for each $i \in \{ 1, \dots, N \}$, there exists a constant $A_i > 0$ such that for any $x_1, \dots, x_{i-1}, x_{i+1}, \dots, x_N \in S$, and for each $x_i, x_i' \in S$,
    \[ |f(x_1, \dots, x_i, \dots, x_N) - f(x_1, \dots, x_i', \dots, x_N)| \leq A_i. \]
    Then McDiarmid's inequality guarantees that for all $t \geq 0$,
    \[ \PP \left( |f(X_1, \dots, X_N) - \EE(f(X_1, \dots, X_N))| \geq t \right) \leq 4 \exp \left( \frac{-2t^2}{A_1^2 + \dots + A_N^2} \right). \]
    Proofs of McDiarmid's inequality for real-valued functions are given in many probability texts, for instance, in Theorem 3.11 of \cite{VanHandel}, but can be trivially extended to the complex-valued case by taking a union bound to the inequality for real and imaginary values of $f$.

    A special case of McDiarmid's inequality is \emph{Hoeffding's Inequality}. Hoeffding's inequality is often stated in slightly different ways depending on the context; In this paper we use the following formulation: if $\{ X_1, \dots, X_N \}$ is a family of independent random variables, such that for each $i$, there exists a constant $A_i \geq 0$ such that $|X_i| \leq A_i$ almost surely, then for each $t \geq 0$,
    \[ \PP \left( |(X_1 + \dots + X_N) - \EE(X_1 + \dots + X_N)| \geq t \right) \leq 4 \exp \left(\frac{-t^2}{2(A_1^2 + \dots + A_N^2)} \right). \]
\end{itemize}

\section{Applications of our Results} \label{ApplicationsSection}

\subsection{Arithmetic Patterns}

An important problem in current research on pattern avoidance is to construct sets $E$ which avoid \emph{linear patterns}, i.e. sets $E$ which avoid solutions to equations of the form
\begin{equation}
    m_1x_1 + \dots + m_nx_n = 0
\end{equation}
for distinct points $x_1,\dots,x_n \in E$.
%
%
This is one scenario in which we know \emph{upper bounds} on the Fourier dimension of pattern avoiding sets. It is simple to prove that if $E \subset \TT^d$, $\fordim(E) > 2d/n$, and $m_1,\dots,m_n$ are non-zero integers, then $m_1 E + \dots + m_n E$ is an open subset of $\TT^d$. This is obtained by a simple modification of the argument of \cite[Proposition 3.14]{MattilaFourier}. Thus there exists \emph{some} choice of integers $m_1,\dots,m_n$ and distinct points $x_1,\dots,x_n \in E$ such that $m_1x_1 + \dots + m_nx_n = 0$. Recently, under the same assumptions, Liang and Pramanik \cite{LiangPramanik} have shown that for $d = 1$, one can choose these integers $m_1,\dots,m_n$ to satisfy $m_1 + \dots + m_n = 0$. These results drastically contrasts the Hausdorff dimension setting, where there exists sets $E \subset \TT^d$ with $\hausdim(E) = d$ which are linearly independent over the rational numbers, and thus avoid nontrivial solutions to integer equations of an arbitrary size (see \cite{Keleti} for a discussion of the case where $d = 1$, whose proof can be adapted to higher dimensions).

If $\fordim(E) > d/n$, and $m_1,\dots,m_n \neq 0$, then the set $m_1 E + \dots + m_n E$ has \emph{positive Lebesgue measure} \cite[Proposition 3.14]{MattilaFourier}. This does not necessarily mean that $E$ will necessarily contain solutions to the equation $m_1x_1 + \dots + m_nx_n = 0$, but indicates why it might be difficult to  push past the current Fourier dimension estimates obtained in \eqref{FourierDimensionBoundFromMyTHesis}, which construct sets with Fourier dimension $d/n$ avoiding such patterns. The first success in pushing past this barrier was the main result of \cite{Korner2}, which showed that for each $n > 0$, there exists a set $E \subset \TT$ with Fourier dimension $1/(n-1)$ such that for any integers $m_1,\dots,m_n \in \ZZ$, not all zero, and any distinct $x_1,\dots,x_n \in \TT^d$, $m_1x_1 + \dots + m_nx_n \neq 0$. The technique used to control Fourier decay in that paper (bounding first derivatives of distribution functions associated with the construction of a random family of pattern avoiding sets) relies heavily on the one dimensional nature of the problem, which makes it difficult to generalize the proof technique to higher dimensions. The results of this paper imply a more robust $d$-dimensional generalization of the result of \cite{Korner2}.

\begin{theorem} \label{dDimensionalKornerResult}
    Suppose $F \subset \TT^d$ is the countable union of a family of compact sets, each with lower Minkowski dimension $\alpha$. Then there exists a Salem set $E \subset \TT^d$ of dimension $(d - \alpha)/(n-1)$ such that for any $x \in F$, any distinct $x_1,\dots,x_n \in E$, and any integers $m_1,\dots,m_n \in \ZZ$, $m_1x_1 + \dots + m_nx_n \neq x$. Moreover, if $\beta_0 = d/(n-1)$, then for any $\beta \leq \beta_0$, and for a generic set $(E,\mu) \in \mathcal{X}_\beta$, the set $E$ has this property.
\end{theorem}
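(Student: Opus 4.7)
The plan is to reduce the uncountable family of linear equations to countably many sub-patterns, each handled by Theorem \ref{maintheorem} or Theorem \ref{thirdTheorem}, and then to intersect the resulting comeager subsets of $\mathcal{X}_\beta$ for $\beta = d/(n-1)$. Write $S = \bigcup_{\ell \geq 1} S_\ell$ with each $S_\ell$ compact of lower Minkowski dimension zero. For each non-zero tuple $m \in \ZZ^n$ and each index $\ell$, we need to forbid distinct solutions to $m_1 x_1 + \dots + m_n x_n = s$ with $s \in S_\ell$, and since there are only countably many such pairs $(m,\ell)$, it suffices to show that each such avoidance condition holds on a comeager subset of $\mathcal{X}_\beta$.

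Suppose first that exactly one coefficient $m_j$ is nonzero. Then the equation forces $x_j \in m_j^{-1}(S_\ell)$, the preimage of $S_\ell$ under the finite-to-one Lipschitz covering $x \mapsto m_j x$ of $\TT^d$. This preimage is compact of lower Minkowski dimension zero, and Theorem \ref{maintheorem} applied with $n = 1$ and $\alpha = 0$ yields $\beta_0 = d$, so the collection of $(E,\mu) \in \mathcal{X}_\beta$ with $E \cap m_j^{-1}(S_\ell) = \emptyset$ is comeager. Next suppose that at least two coefficients are nonzero; after permuting the roles of $x_1,\dots,x_n$ (which is harmless, as distinctness is symmetric), we may assume $m_{n-1}, m_n \neq 0$. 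The equation then becomes $x_n - a x_{n-1} \in T(x_1,\dots,x_{n-2})$, where
\[
    a = -\frac{m_{n-1}}{m_n} \in \QQ \setminus \{0\}, \qquad T(x_1,\dots,x_{n-2}) = m_n^{-1}\!\left(S_\ell - \sum_{i=1}^{n-2} m_i x_i\right).
\]
The map $T$ sends $\TT^{d(n-2)}$ into compact subsets of $\TT^d$; it is Lipschitz in Hausdorff distance, because translation by a Lipschitz function is a Hausdorff isometry and the covering $m_n^{-1}$ is locally Lipschitz with constant $1/|m_n|$. Each value $T(x_1,\dots,x_{n-2})$ is, up to translation, a fixed preimage $m_n^{-1}(S_\ell)$, and so has lower Minkowski dimension zero uniformly. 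Taking $\alpha = d(n-1)$ in Theorem \ref{thirdTheorem} gives $\beta_0 = (dn - d(n-1))/(n-1) = d/(n-1)$, and the corresponding avoidance is comeager in $\mathcal{X}_\beta$.

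Intersecting these countably many comeager sets with the comeager Salem condition, which is already furnished by each individual theorem, yields a comeager subfamily $G \subset \mathcal{X}_\beta$ whose every pair $(E,\mu)$ satisfies the full claim of Theorem \ref{dDimensionalKornerResult}, and the Baire category theorem establishes existence. The principal technical point is to make precise the interpretation of ``division by $m_n$'' in $\TT^d$ and to check the hypotheses of Theorem \ref{thirdTheorem}; this reduces to observing that $x \mapsto m_n x$ is an $|m_n|^d$-to-one Lipschitz covering with Lipschitz local inverse, so that preimages of compact zero-dimensional sets remain compact and zero-dimensional and depend Lipschitz-continuously on translations of the base set.
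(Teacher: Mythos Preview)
Your argument is correct and follows essentially the same route as the paper: reduce to countably many integer tuples $m$ and countably many compact pieces $S_\ell$, treat the case of a single nonzero coefficient via Theorem~\ref{maintheorem} with $n=1$, treat the case of at least two nonzero coefficients via Theorem~\ref{thirdTheorem} with $\alpha=d(n-1)$, and intersect the resulting comeager families. Your handling of division by $m_n$ on $\TT^d$ (passing to the finite preimage under $x\mapsto m_n x$) is in fact more explicit than the paper's own writeup, which simply rescales to rational coefficients and absorbs the preimage into a modified $S$; the two formulations are equivalent.
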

\begin{proof}
    It will suffice to show that a generic set $(E,\mu) \in \mathcal{X}_\beta$ is Salem and avoids solutions to equations of the form
    \begin{equation} \label{equationOIDJDIOJOIJ2131232143fdfef}
        x_n - a_{n-1} x_{n-1} = x + a_3x_3 + \dots + a_nx_n,
    \end{equation}
    with $a_2,\dots,a_n \in \QQ$, $x \in F$, and where either $a_{n-1} \neq 0$, or $a_2 = a_3 = \dots = a_n = 0$. Without loss of generality, we may assume $F$ is compact and has lower Minkowski dimension $\alpha$. If $a_{n-1} \neq 0$, then Theorem \ref{thirdTheorem} applies directly to the equation
    \begin{equation} \label{FirstKindofEquation}
        x_n - a_{n-1} x_{n-1} - f(x_1,\dots,x_{n-2}) \in F,
    \end{equation}
    where $f(x_1,\dots,x_{n-2}) = a_1x_1 + \dots + a_{n-2}x_{n-2}$. Applying Theorem \ref{thirdTheorem}, we conclude that the set of $(E,\mu) \in \mathcal{X}_\beta$ such that $E$ is Salem and avoids solutions to \eqref{FirstKindofEquation} is comeager. On the other hand, if $a_2 = a_3 = \dots = a_n = 0$, then the equation we must avoid is precisely
    \begin{equation} \label{SecondKindofEquation}
        x_1 \in S,
    \end{equation}
    and it follows from Theorem \ref{maintheorem} with $Z = S$ and $n = 1$ that the set of $(E,\mu) \in \mathcal{X}_\beta$ such that $E$ is Salem and avoids solutions to \eqref{SecondKindofEquation} is comeager. Taking countable unions ranging over the choices of coefficients $a_2,\dots,a_n$ shows that the set of all $(E,\mu) \in \mathcal{X}_\beta$ such that $E$ is Salem and avoids all $n$-variable linear equations is comeager, which completes the proof.
\end{proof}

\begin{remark}
    For \emph{particular} linear patterns, it is certainly possible to improve the result of Theorem \ref{dDimensionalKornerResult}. For instance, Schmerkin \cite{Schmerkin} constructed a set $E \subset \TT$ with $\fordim(E) = 1$ which contains no three numbers forming an arithmetic progressions, i.e. solutions to the linear equation
    \[ (x_3 - x_2) - (x_2 - x_1) = 0. \]
    This equation can also be written as
    \[ x_3 - 2x_1 + x_1 = 0. \]
    Liang and Pramanik \cite{LiangPramanik} generalized this technique by constructing, for any finite family of translation-invariant linear functions $\{ f_i \}$, a set $E \subset \TT$ with $\fordim(E) = 1$ such that for distinct $x_1,\dots,x_n \in E$, and any index $i$, $f_i(x_1,\dots,x_n) \neq 0$. This same paper even constructs a set with Fourier dimension close to one avoiding an uncountable family of translation-invariant linear functions, though only those that are of a very special form. The advantage of Theorem \ref{dDimensionalKornerResult} is that it applies to a very general family of uncountably many linear equations, though one does not obtain as high a Fourier dimension bound as those obtained in \cite{LiangPramanik} and \cite{Schmerkin}. Nonetheless, though we construct sets of dimension $d/(n-1)$, we still remain quite far away from the best known upper bound $2d/n$ of the Fourier dimension of a set avoiding general integer linear equations.
\end{remark}

The arguments in this paper are heavily inspired by the techniques of \cite{Korner2}, but augmented with some more robust probabilistic concentration inequalities and oscillatory integral techniques, which enables us to push the results of \cite{Korner2} to a much more general family of patterns. In particular, Theorem \ref{thirdTheorem} shows that the results of that paper do not depend on the rich arithmetic structure of the equation $m_1x_1 + \dots + m_nx_n = 0$, but rather only on a very weak translation invariance property of the pattern. We are unable to close the gap between the upper bound $2d/n$ of sets avoiding $n$-variable linear equations for $n \geq 3$, which would seem to require utilizing the full linear nature of the equations involved much more heavily than the very weak linearity assumption that Theorem \ref{dDimensionalKornerResult} requires.

\subsection{Isosceles Triangles on Curves, and other Nonlinear Patterns}

Theorems \ref{maintheorem}, \ref{theoremJOICVIOJVI122}, and \ref{thirdTheorem} can be applied to find sets avoiding linear patterns, but the main power of these results that they can be applied to `nonlinear' patterns which cannot be analyzed quite as easily via the Fourier transform. As a result, relatively few results exist showing that sets with large Fourier dimension avoid patterns, though some partial results are given in \cite{HenriotLabaPramanik} for a slightly different regime than that considered here; we do not even know if sets $E \subset \TT$ with Fourier dimension one contain one of the simplest nonlinear patterns $\{ x, y, z, z + (y - x)^2 \}$, and results like that of \cite{Schmerkin} show that even in the linear setting it is difficult to conjecture what might be optimal in this setting. However, Theorem \ref{thirdTheorem} applies to this pattern, since the pattern is specified by the equation $z - x = (y - w)^2$, obtaining a Salem set $E \subset \TT$ of dimension $1/3$ avoiding this pattern, the first proof of the existence of a set with positive Fourier dimension avoiding isosceles triangles in the literature.

In this section, we consider a more geometric example of a non-linear pattern, finding subsets of curves which do not contain isosceles triangles; given a simple segment of a curve given by a smooth map $\gamma : [0,1] \to \RR^d$, we say a set $E \subset [0,1]$ \emph{avoids isosceles triangles on $\gamma$} if for any distinct values $t_1,t_2,t_3 \in [0,1]$, $|\gamma(t_1) - \gamma(t_2)| \neq |\gamma(t_2) - \gamma(t_3)|$, i.e. if $\gamma(E)$ does not contain any three points forming the vertices of an  isosceles triangle. In \cite{PramanikFraser}, methods are provided to construct sets $E \subset [0,1]$ with $\hausdim(E) = \log_3 2 \approx 0.63$ such that $\gamma(E)$ does not contain any isosceles triangles, but $E$ is not guaranteed to be Salem. Using the results of this paper, we can now construct \emph{Salem sets} $E \subset [0,1]$ with $\fordim(E) = 4/9 \approx 0.44$, such that $\gamma(E)$ does not contain any isosceles triangles.

\begin{theorem}
    For any smooth map $\gamma: [0,1] \to \RR^d$ with $\gamma'(t) \neq 0$ for all $t \in [0,1]$, there exists a Salem set $E \subset [0,1]$ with $\fordim(E) = 4/9$ which avoids isosceles triangles on $\gamma$.
\end{theorem}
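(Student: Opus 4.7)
The plan is to deduce this from Theorem \ref{theoremJOICVIOJVI122} applied with $n = 3$ and $d = 1$, which yields exactly $\beta_0 = 1/(3 - 3/4) = 4/9$. First I would reduce from $[0,1]$ to $\TT$ using the periodization argument sketched in the paper's introduction, and rewrite the isosceles condition on three distinct points $(x_1, x_2, x_3)$ as a union of three smooth equations $F_k = 0$, one for each choice of apex $k \in \{1, 2, 3\}$; for example, $F_3(x_1, x_2, x_3) = |\gamma(x_1) - \gamma(x_3)|^2 - |\gamma(x_2) - \gamma(x_3)|^2$. Because pattern avoidance for a set $E$ is invariant under relabeling of the points in a triple, it suffices to handle each $F_k = 0$ independently.

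For each $F_k = 0$, on the open set where $\partial_{x_k} F_k \neq 0$, the implicit function theorem locally writes the apex variable as a smooth function $x_k = f(x_i, x_j)$. A short computation gives $\partial_{x_\ell} f = -\partial_{x_\ell} F_k / \partial_{x_k} F_k$, so the transversality hypothesis of Theorem \ref{theoremJOICVIOJVI122} (invertibility of $D_{x_\ell} f$, which in dimension $d = 1$ is just nonvanishing of a scalar) reduces to the geometric condition $\gamma'(x_\ell) \cdot (\gamma(x_\ell) - \gamma(x_k)) \neq 0$ for $\ell \in \{i, j\}$: neither leg of the isosceles triangle is perpendicular to the tangent of $\gamma$ at its non-apex endpoint. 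I would cover the open locus where all three partials of $F_k$ are nonzero by countably many such implicit-function patches $V \subset \TT^2$ (using second-countability), apply Theorem \ref{theoremJOICVIOJVI122} on each patch to obtain a comeager subfamily of $\mathcal{X}_{4/9}$, and intersect over all three apex cases.

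The main obstacle is the degenerate locus inside each $\{F_k = 0\}$ where one of the three partials of $F_k$ also vanishes, since there the implicit parametrization or its transversality breaks down. Each such pair of simultaneous equations is two smooth conditions in three variables, so one expects it to carve out a one-dimensional set. To verify this for an arbitrary smooth $\gamma$ with nonvanishing $\gamma'$, I would use a Taylor expansion: the identity $\partial_{x_\ell}[(\gamma(x_\ell) - \gamma(x_k)) \cdot \gamma'(x_\ell)]\big|_{x_\ell = x_k} = |\gamma'(x_k)|^2 \neq 0$ shows the offending scalar vanishes only to first order along the diagonal, so no bad solutions with distinct coordinates accumulate near the diagonal; away from the diagonal the set is compact and still cut out by two independent smooth equations, hence has lower Minkowski dimension at most some $\alpha$ I expect to be $1$.

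Because Theorem \ref{maintheorem} with $n = 3$, $d = 1$ delivers avoidance up to dimension $(3 - \alpha)/(5/2)$, any bound $\alpha \leq 17/9$ is more than enough to reach $4/9$, so the degenerate locus contributes only a weaker constraint than the good part. A final Baire-category argument assembling the countably many comeager conditions from the good patches together with the finitely many applications of Theorem \ref{maintheorem} to the degenerate loci yields a generic $(E, \mu) \in \mathcal{X}_{4/9}$ that is Salem, has $\fordim(E) = 4/9$, and whose image $\gamma(E)$ contains no isosceles triangle.
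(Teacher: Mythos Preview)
Your strategy is sound in outline, and the paper likewise reduces to Theorem~\ref{theoremJOICVIOJVI122} with $n=3$, $d=1$. But the paper's execution is much simpler than yours, and the simplification matters because your treatment of the degenerate locus has a gap.

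The paper does not attempt to work on all of $[0,1]$. Instead it first restricts (and rescales) to a short subinterval $[0,\varepsilon]$ on which $\gamma$ is uniformly close to linear. On such an interval one has the quantitative bound
\[
\bigl|(\gamma(t_i)-\gamma(t_j))\cdot\gamma'(t_k)\bigr| \;\gtrsim\; |t_i-t_j|
\]
for all $t_i,t_j,t_k\in[0,\varepsilon]$, so every partial $\partial_{t_\ell}F$ of $F(t_1,t_2,t_3)=|\gamma(t_1)-\gamma(t_2)|^2-|\gamma(t_2)-\gamma(t_3)|^2$ is nonzero off the diagonal. Hence the degenerate locus is empty: the implicit functions $t_1=f_i(t_2,t_3)$ exist globally on $[0,\varepsilon]$ and have nonvanishing partials everywhere that $t_2\neq t_3$, so Theorem~\ref{theoremJOICVIOJVI122} applies directly with no recourse to Theorem~\ref{maintheorem}. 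Also, only one apex case is needed: since Theorem~\ref{theoremJOICVIOJVI122} quantifies over \emph{all ordered} triples of distinct points in $E$, avoiding $F=0$ for every ordering already forbids every isosceles configuration.

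By contrast, your approach stays on the full interval and must therefore control the degenerate locus $\{F_k=0\}\cap\{\partial_{t_\ell}F_k=0\}$ via Theorem~\ref{maintheorem}. Your Taylor argument does show this set does not accumulate on the diagonal, but away from the diagonal your claim that ``two smooth conditions in three variables'' yields lower Minkowski dimension $\le 1$ (or even $\le 17/9$) is unjustified for merely $C^\infty$ curves: you have not checked that the two differentials are linearly independent along the common zero set, and even when they are, the zero set of a smooth function can have pathological lower Minkowski dimension without analyticity. Shrinking the interval, as the paper does, eliminates this difficulty entirely.
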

\begin{proof}
    Assume without loss of generality (working on a smaller portion of the curve if necessary and then rescaling) that there exists a constant $C \geq 1$ such that for any $t,s \in [0,1]$,
    \begin{equation} \label{equationOIVOIJOIJOIJ1231231245532}
        |\gamma(t) - \gamma(s) - (t - s)\gamma'(0)| \leq C (t - s)^2,
    \end{equation}
    \begin{equation} \label{equationDOIJCOIJCOIJCOIJ}
        1/C \leq |\gamma'(t)| \leq C,
    \end{equation}
    and
    \begin{equation} \label{equationCIOJAOIVJVOIJioj1312421541}
        |\gamma'(t) - \gamma'(s)| \leq C |t - s|.
    \end{equation}
    Let $\varepsilon = 1/2C^3$, and let
    \begin{equation}
        F(t_1,t_2,t_3) = |\gamma(t_1) - \gamma(t_2)|^2 - |\gamma(t_2) - \gamma(t_3)|^2.
    \end{equation}
    A simple calculation using \eqref{equationOIVOIJOIJOIJ1231231245532} and \eqref{equationDOIJCOIJCOIJCOIJ} reveals that for $0 \leq t_1,t_2 \leq \varepsilon$,
    \begin{equation} \label{equationCOIJAWOIJCAWOIJWOAI2112412}
        \left| \frac{\partial F}{\partial t_1} \right| = 2 \left| (\gamma(t_1) - \gamma(t_2)) \cdot \gamma'(t_1) \right| \geq (2/C) |t_2 - t_1| - 2C |t_2 - t_1|^2 \geq (1/C) |t_2 - t_1|.
    \end{equation}
    This means that $\partial F / \partial t_1 \neq 0$ unless $t_1 = t_2$. Thus the implicit function theorem implies that there exists a countable family of smooth functions $\{ f_i: U_i \to [0,1] \}$, where $U_i \subset [0,\varepsilon]^2$ for each $i$ and $f_i(t_2,t_3) \neq t_3$ for any $(t_2,t_3) \in U_i$, such that if $F(t_1,t_2,t_3) = 0$ for distinct points $t_1,t_2,t_3 \in [0,\varepsilon]$, then there exists an index $i$ with $(t_2,t_3) \in U_i$ and $t_1 = f_i(t_2,t_3)$. Differentiating both sides of the equation
    \begin{equation}
        |\gamma(f_i(t_2,t_3)) - \gamma(t_2)|^2 = |\gamma(t_2) - \gamma(t_3)|^2
    \end{equation}
    in $t_2$ and $t_3$ shows that
    \begin{equation} \label{firstPartialDerivative}
        \frac{\partial f_i}{\partial t_2}(t_2,t_3) = \frac{(\gamma(f_i(t_2,t_3)) - \gamma(t_3)) \cdot \gamma'(t_2)}{(\gamma(f_i(t_2,t_3)) - \gamma(t_2)) \cdot \gamma'(f_i(t_2,t_3))}
    \end{equation}
    and
    \begin{equation} \label{secondPartialDerivative}
        \frac{\partial f_i}{\partial t_3}(t_2,t_3) = \frac{- (\gamma(t_2) - \gamma(t_3)) \cdot \gamma'(t_3)}{(\gamma(f_i(t_2,t_3)) - \gamma(t_2)) \cdot \gamma'(f_i(t_2,t_3))}.
    \end{equation}
    In order to apply Theorem \ref{theoremJOICVIOJVI122}, we must show that the partial derivatives in \ref{firstPartialDerivative} and \ref{secondPartialDerivative} are both non-vanishing for $t_2,t_3 \in [0,\varepsilon]$. We calculate using \eqref{equationOIVOIJOIJOIJ1231231245532}, \eqref{equationDOIJCOIJCOIJCOIJ} and \eqref{equationCIOJAOIVJVOIJioj1312421541} that
    \begin{align} \label{equationDOIJCOIJCOIJJOIJddwad12}
    \begin{split}
        |(\gamma(f_i(t_2,t_3)) - \gamma(t_3)) \cdot \gamma'(t_2)| &\geq |(\gamma(f_i(t_2,t_3)) - \gamma(t_3)) \cdot \gamma'(t_3)|\\
        &\quad\quad + |(\gamma(f_i(t_2,t_3)) - \gamma(t_3)) \cdot (\gamma'(t_2) - \gamma'(t_3))|\\
        &\geq (1/C) |f_i(t_2,t_3) - t_3| - C^2 |f_i(t_2,t_3) - t_3||t_2 - t_3|\\
        &\geq (1/C - C^2 \varepsilon) |f_i(t_2,t_3) - t_3|\\
        &\geq (1/2) |f_i(t_2,t_3) - t_3|.
    \end{split}
    \end{align}
    Since $f_i(t_2,t_3) \neq t_3$ for all $(t_2,t_3) \in U_i$, it follows from \eqref{firstPartialDerivative} and \eqref{equationDOIJCOIJCOIJJOIJddwad12} that if $(t_2,t_3) \in U_i$ with $t_2 \neq t_3$,
    \begin{equation} \label{nonvanishingt2derivative}
        \frac{\partial f_i}{\partial t_2}(t_2,t_3) \neq 0.
    \end{equation}
    A similar calculation to \eqref{equationCOIJAWOIJCAWOIJWOAI2112412} shows that for $t_2,t_3 \in [0,\varepsilon]$,
    \begin{equation} \label{thirdPartialDerivativeIsNonvanishing}
        |(\gamma(t_2) - \gamma(t_3)) \cdot \gamma'(t_3)| \geq (1/C) |t_2 - t_3|.
    \end{equation}
    Combining \eqref{secondPartialDerivative} with \eqref{thirdPartialDerivativeIsNonvanishing} shows that for $t_2 \neq t_3$ with $(t_2,t_3) \in U_i$,
    \begin{equation} \label{nonvanishingt3derivative}
        \frac{\partial f_i}{\partial t_3}(t_2,t_3) \neq 0.
    \end{equation}
    Now \eqref{nonvanishingt2derivative} and \eqref{nonvanishingt3derivative} imply that each function in the family $\{ f_i \}$ satisfy the hypothesis of Theorem \ref{theoremJOICVIOJVI122}. Thus that theorem implies that for $\beta = 4/9$, each index $i$, and a generic element of $(E,\mu) \in \mathcal{X}_\beta$, the set $E$ is Salem and for any distinct $t_1,t_2,t_3 \in E \cap [0,\varepsilon]$, $f_i(t_1,t_2,t_3) \neq 0$. This means precisely that $|\gamma(t_1) - \gamma(t_2)| \neq |\gamma(t_2) - \gamma(t_3)|$ for any distinct $t_1,t_2,t_3 \in E$. Thus we conclude we can find a Salem set $E \subset [0,\varepsilon]$ with $\fordim(E) = 4/9$ such that $\gamma(E)$ does not contain the vertices of any isosceles triangles.
\end{proof}

Theorem \ref{maintheorem} can also be used to construct sets with a slightly smaller dimension avoiding isosceles triangles on a rougher family of curves. If we consider a Lipschitz function $\gamma: [0,1] \to \RR^{d-1}$, where there exists $M < 1$ with $|\gamma(t) - \gamma(s)| \leq M |t - s|$ for each $t,s \in [0,1]$, then Theorem 3 of \cite{OurPaper} guarantees that the set
\[ Z = \left\{ (x_1,x_2,x_3) \in [0,1]^3 : \begin{array}{c}
            \text{$(x_1,\gamma(x_1)), (x_2,\gamma(x_2)), (x_3,\gamma(x_3))$}\\
            \text{form the vertices of an isosceles triangle.} \end{array} \right\} \]
has lower Minkowski dimension at most two. Thus Theorem \ref{maintheorem} guarantees that there exists a Salem set $E \subset [0,1]$ with $\fordim(E) = 2/5 = 0.4$ such that $\gamma(E)$ avoids all isosceles triangles. The main result of \cite{PramanikFraser} constructs a set $E \subset [0,1]$ with $\hausdim(E) = 0.5$ such that $\gamma(E)$ avoids all isosceles triangles, but this set is not guaranteed to be Salem.

\section{A Metric Space Controlling Fourier Dimension}

In order to work with a Baire category type argument, we must construct an appropriate metric space appropriate for our task, and establish a set of tools for obtaining convergence in this metric space. In later sections we will fix a specific choice of $\beta$ to avoid a particular pattern. But in this section we let $\beta$ be an arbitrary fixed number in $(0,d]$. Our approach in this section is heavily influenced by \cite{Korner2}. However, we employ a Fr\'{e}chet space construction instead of the Banach space construction used in \cite{Korner2}, which enables us to use softer estimates in our arguments, with the disadvantage that we can obtain only Fourier dimension bounds in Theorems \ref{maintheorem}, \ref{theoremJOICVIOJVI122}, and \ref{thirdTheorem} at the endpoint dimensions $\beta_0$ considered in the theorems, rather than the explicit decay estimates as is obtained, for instance, in Theorem 2.4 of \cite{Korner2}:
\begin{itemize}
    \item We let $\mathcal{E}$ denote the family of all compact subsets of $\TT^d$. If we consider the Hausdorff distance $d_{\mathbb{H}}$ between sets, then $(\mathcal{E},d_\mathbb{H})$ forms a complete metric space. 

    \item We let $M_*(\beta)$ consist of all finite Borel measures $\mu$ on $\TT^d$ such that for each $\lambda \in [0,\beta)$,
    \[ \| \mu \|_{M(\lambda)} = \sup_{\xi \in \ZZ^d} |\widehat{\mu}(\xi)| |\xi|^{\lambda/2} \]
    is finite. Then $\| \cdot \|_{M(\lambda)}$ is a seminorm on $M_*(\beta)$ for each $\lambda \in [0,\beta)$, and the collection of all such seminorms gives $M_*(\beta)$ the structure of a Frech\'{e}t space (most importantly, this means $M_*(\beta)$ is a \emph{complete} metric space). Under this topology, a sequence of probability measures $\{ \mu_k \}$ converges to a probability measure $\mu$ in $M_*(\beta)$ if and only if for any $\lambda \in [0,\beta)$, $\lim_{k \to \infty} \| \mu_k - \mu \|_{M(\lambda)} = 0$.
\end{itemize}

We now let $\mathcal{X}_\beta$ be the collection of all pairs $(E,\mu) \in \mathcal{E} \times M_*(\beta)$, where $\mu$ is a probability measure such that $\text{supp}(\mu) \subset E$. Then $\mathcal{X}_\beta$ is a closed subset of $\mathcal{E} \times M_*(\beta)$ under the product metric, and thus a complete metrizable space. We remark that for any $\lambda \in [0,\beta)$ and $(E,\mu) \in \mathcal{X}_\beta$,
\begin{equation} \label{equationGFSCSC4}
    \lim_{|\xi| \to \infty} |\widehat{\mu}(\xi)| |\xi|^{\lambda/2} = 0.
\end{equation}
Thus $\fordim(E) \geq \fordim(\mu) \geq \beta$ for each $(E,\mu) \in \mathcal{X}_\beta$. This means that $\mathcal{X}_\beta$ can be thought of as a space of compact sets, augmented with a measure providing a certification guaranteeing the set's Fourier dimension is at least $\beta$.

Lemma \ref{LemmaFIOAJFOIWJ} allows us to reduce the proof of density arguments in $\mathcal{X}_\beta$ to the construction of large discrete subsets in $\TT^d$ with well-behaved Fourier analytic properties. We recall that a family $\mathcal{A}$ of subsets of $\TT^d$ is \emph{downward closed} if, whenever $E \subset \mathcal{A}$, any subset of $E$ is also contained in $\mathcal{A}$.

\begin{lemma} \label{LemmaFIOAJFOIWJ}
    Let $\mathcal{A}$ be a downward closed family of subsets of $\TT^d$. Fix $\beta > 0$, $\kappa > 0$, and a large constant $C > 0$. Suppose that for all small $\delta > 0$, and all $\lambda \in [0,\beta)$, there are arbitrarily large integers $N > 0$ for which there exists a finite set $S = \{ x_1, \dots, x_N \}$, positive numbers $\{ a_1, \dots, a_N \}$ such that $\sum_{k = 1}^N a_k = N$, and a quantity $r \geq (2N)^{-1/\lambda}$ such that the following properties hold:
    \begin{enumerate}
        \item[(1)] $N(S,r) \in \mathcal{A}$.
        \item[(2)] For each $\xi \in \ZZ^d - \{ 0 \}$ with $|\xi| \leq (1/r)^{1 + \kappa}$,
        \[ \left| \frac{1}{N} \sum_{k = 1}^N a_k e^{2 \pi i \xi \cdot x_k} \right| \leq C N^{-1/2} \log(N) + \delta |\xi|^{-\lambda/2} \]
    \end{enumerate}
    Then $\{ (E,\mu) \in \mathcal{X}_\beta : E \in \mathcal{A} \}$ is dense in $\mathcal{X}_\beta$.
\end{lemma}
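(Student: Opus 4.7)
Given $(E_0,\mu_0) \in \mathcal{X}_\beta$ and a basic open neighborhood, which we may take to be determined by a tolerance $\epsilon > 0$ together with a single seminorm $\|\cdot\|_{M(\lambda_0)}$ for some $\lambda_0 \in [0,\beta)$, the plan is to exhibit $(E,\mu) \in \mathcal{X}_\beta$ with $E \in \mathcal{A}$, $d_\mathbb{H}(E,E_0) < \epsilon$, and $\|\mu-\mu_0\|_{M(\lambda_0)} < \epsilon$. Fix intermediate parameters $\lambda \in (\lambda_0,\beta)$, $\kappa' \in (0,\kappa)$, and $\delta > 0$ small, all to be determined, and apply the hypothesis to obtain a finite set $S = \{x_1,\dots,x_N\}$ with positive weights $\{a(x_k)\}$ and scale $r > 0$ satisfying (1) and (2), with $N$ taken arbitrarily large.

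Define $E = S'_r$, where $S' = S \cap (E_0)_{\epsilon/2}$. Monotonicity of $\mathcal{A}$ and the containment $S'_r \subset S_r \in \mathcal{A}$ give $E \in \mathcal{A}$, and $E \subset (E_0)_{\epsilon/2+r} \subset (E_0)_\epsilon$ whenever $r < \epsilon/2$. For the reverse inclusion $E_0 \subset E_\epsilon$, hypothesis (2) forces the weighted counting measure $\nu_S = N^{-1}\sum_k a(x_k)\delta_{x_k}$ to be approximately equidistributed: testing against smoothed indicator functions of small balls and applying the bounds in (2) shows $\nu_S(B_\rho(y)) \approx M|B_\rho|$ for $\rho$ not too small (where $M = N^{-1}\sum_k a(x_k) \geq 1/2$), so $S$ is $O(N^{-1/d})$-dense in $\TT^d$, and once $N$ is large enough every $y \in E_0$ has a companion in $S'$ within $\epsilon/2$.

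For the measure, fix a nonnegative Schwartz bump $\phi$ on $\TT^d$ with $\hat{\phi}(0) = 1$, set $\phi_\sigma(x) = \sigma^{-d}\phi(x/\sigma)$ at scale $\sigma = r^{1+\kappa'}$ so that $1/\sigma \leq (1/r)^{1+\kappa}$, and let $\rho = \mu_0 * \phi_\sigma$. Because $\rho$ vanishes outside $(E_0)_\sigma \subset (E_0)_{\epsilon/2}$, only points $x_k \in S'$ contribute to
\[
\mu = Z^{-1}\sum_{k=1}^N a(x_k)\rho(x_k)\phi_\sigma(\cdot - x_k), \qquad Z = \sum_k a(x_k)\rho(x_k),
\]
so $\text{supp}(\mu) \subset S'_\sigma \subset E$ and $Z > 0$ by the equidistribution of $S$. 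Expanding $\rho(x_k)$ as a Fourier series yields
\[
\hat{\mu}(\xi) = \frac{N\hat{\phi}_\sigma(\xi)}{Z}\sum_\eta \hat{\phi}_\sigma(\eta)\hat{\mu_0}(\eta)\widehat{\nu_S}(\xi-\eta);
\]
the diagonal term $\eta=\xi$ contributes $\hat{\phi}_\sigma(\xi)^2\hat{\mu_0}(\xi)$ (after using $Z \approx NM$), which matches $\hat{\mu_0}(\xi)$ up to errors absorbed by the rapid decay of $\hat{\phi}_\sigma$ against the $|\xi|^{\lambda_0/2}$ weight, while the off-diagonal terms are controlled via hypothesis (2) applied at $\xi-\eta$, with the Schwartz decay of $\hat{\phi}_\sigma(\eta)$ restricting the effective sum to $|\eta| \lesssim 1/\sigma \leq (1/r)^{1+\kappa}$.

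The principal obstacle is ensuring that the $CN^{-1/2}\log N$ piece of hypothesis (2), once summed against the $\ell^1$ mass of $\hat{\phi}_\sigma\hat{\mu_0}$ and weighted by $|\xi|^{\lambda_0/2}$, tends to zero uniformly in $\xi$ as $N \to \infty$. Achieving this requires a delicate simultaneous choice of $\sigma$, $r$, $N$, and $\delta$, exploiting the extra decay $|\hat{\mu_0}(\eta)| \lesssim |\eta|^{-\lambda'/2}$ for $\lambda' < \beta$ to sharpen the $\ell^1$ bound, and the freedom in the hypothesis to take $N$ much larger than the minimal $r^{-\lambda}/2$. Once these parameters are tuned, sending first $\delta \to 0$ and then $N \to \infty$ forces $\|\mu-\mu_0\|_{M(\lambda_0)} \to 0$, completing the density argument.
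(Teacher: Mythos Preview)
Your approach has a genuine gap at exactly the point you flag as the ``principal obstacle,'' and the suggested fix does not work. Summing the $CN^{-1/2}\log N$ piece of hypothesis (2) against $\sum_{\eta}|\widehat{\phi_\sigma}(\eta)\widehat{\mu_0}(\eta)|$ and weighting by $|\xi|^{\lambda_0/2}$ (the worst case being $|\xi|\sim 1/\sigma$) produces a quantity of order
\[
N^{-1/2}\log N \cdot (1/\sigma)^{\lambda_0/2} \cdot (1/\sigma)^{\,d-\lambda'/2},
\]
since $|\widehat{\mu_0}(\eta)|\lesssim |\eta|^{-\lambda'/2}$ with $\lambda'<\beta\le d$ gives only $\sum_{|\eta|\lesssim 1/\sigma}|\eta|^{-\lambda'/2}\sim (1/\sigma)^{d-\lambda'/2}$. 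The constraint $N\ge (1/2)r^{-\lambda}$ in the hypothesis is a \emph{lower} bound on $r$, namely $r\ge (2N)^{-1/\lambda}$, so $\sigma=r^{1+\kappa'}\gtrsim N^{-(1+\kappa')/\lambda}$ and the displayed quantity is at least of order $N^{-1/2}\cdot N^{(1+\kappa')(d-\epsilon)/\lambda}$ for some small $\epsilon>0$. Since $\lambda<\beta\le d$, the exponent $(1+\kappa')(d-\epsilon)/\lambda$ exceeds $1$ (let alone $1/2$), and the expression diverges. Your appeal to ``freedom to take $N$ much larger than $r^{-\lambda}/2$'' is not granted by the hypothesis: the pair $(S,r)$ is produced together, and in the paper's applications one has $N\sim r^{-\lambda}$ with no room to spare.

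The paper resolves this by first reducing (via a separate density lemma) to the case where $\mu_0\in C^\infty(\mathbb{T}^d)$, so that $\|\mu_0\|_{M(3d)}<\infty$ and in particular $\widehat{\mu_0}\in\ell^1$. One then forms $f=\nu_S*\phi_r$ (mollification at scale $r$, not $r^{1+\kappa'}$) and sets $\mu$ to be the normalization of $f\mu_0$. The off-diagonal sum is absorbed into a product-type estimate $\|f\mu_0-\mu_0\|_{M(\gamma)}\lesssim_d \|\mu_0\|_{M(3d)}\,\|f\|_{M(\gamma)}$, whose first factor depends on the (now smooth) $\mu_0$ but is independent of $N$ and $r$; one is then left only with showing $\|f\|_{M(\gamma)}$ is small, which follows from hypothesis (2) and the mollifier decay. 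The preliminary smoothness reduction is the key idea missing from your argument; without it, the $\ell^1$ mass you must sum over grows with $1/\sigma$ faster than $N^{-1/2}$ can compensate.
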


\begin{remark}
    We will be able to take $\delta = 0$ for the applications of Lemma \ref{LemmaFIOAJFOIWJ} in Theorems \ref{maintheorem} and \ref{thirdTheorem} by applying purely probabilistic arguments which give rise to square root cancellation. We only need to take $\delta > 0$ when applying this result to Theorem \ref{theoremJOICVIOJVI122}, because we must apply some oscillatory integral bounds which give an additional decaying factor as $\xi \to \infty$.
\end{remark}

Let us comment on the intuition underlying Lemma \ref{LemmaFIOAJFOIWJ}. Consider a large integer $N$, and suppose there is a discrete family of $N$ points $S = \{ x_1, \dots, x_N \}$ such that $N(S,r)$ does not contain any incidences of a particular pattern. Then $N(S,r)$ is a union of $N$ balls of radius $r$, so if $N \approx r^{-\beta}$, and these balls do not overlap too much, we might expect $N(S,r)$ to behave like an $r$-thickening of a $\beta$-dimensional set. The Fourier analytic properties of $S$ can be understood by taking exponential sums, i.e. considering quantities of the form
\[ \frac{1}{N} \sum_{k = 1}^N a_k e^{2 \pi i \xi \cdot x_k}. \]
where $a_1,\dots,a_N$ are non-negative and sum to one as in Lemma \ref{LemmaFIOAJFOIWJ}. For any set $S$, taking in absolute values gives a trivial bound on the exponential sum
\begin{equation}
    \left| \frac{1}{N} \sum_{k = 1}^N a(x_k) e^{2 \pi i \xi \cdot x_k} \right| \leq 1,
\end{equation}
and this bound can be tight for general sets $S$, for instance, if $S$ behaves like an arithmetic progression with a frequency $\xi$, as happens for
\[ S = \left\{ k \frac{\xi}{|\xi|^2} : 1 \leq k \leq N \right\}. \]
If one can significantly improve upon this bound, one therefore thinks of $S$ as having additional regularity from the perspective of Fourier analysis. The best case we can hope to hold for a `generic' choice of $S$ is a \emph{square root cancellation bound} of the form
\begin{equation} \label{equationvVVVISJDOIAWJDOIAJIO}
    \left| \frac{1}{N} \sum_{k = 1}^N a_k e^{2 \pi i \xi \cdot x_k} \right| \leq C N^{-1/2},
\end{equation}
which, roughly speaking, holds if the points $\{ x_k \}$ are not significantly periodic at the frequency $\xi$. If $\kappa > 0$ is fixed, and equation \eqref{equationvVVVISJDOIAWJDOIAJIO} holds for all $|\xi| \lesssim (1/r)^{1 + \kappa}$, then one might therefore think of $N(S,r)$ as behaving like an $r$-thickening of a \emph{Salem set} with dimension $\beta$. Note that, up to a logarithmic constant, and a negligible term which decays as $|\xi| \to \infty$, Lemma \ref{LemmaFIOAJFOIWJ} obtains such a square root cancellation bound. Since the assumptions of Lemma \ref{LemmaFIOAJFOIWJ} guarantee that we can construct such sets at arbitrarily small scales $r$, it makes sense that one should be able to use such assumptions to construct Salem sets avoiding patterns.

Lemma \ref{LemmaFIOAJFOIWJ} not only guarantees that we can construct Salem sets avoiding patterns, but also guarantees that such pattern avoiding sets are \emph{generic} among all Salem sets. The reason such a result is possible given the assumptions of Lemma \ref{LemmaFIOAJFOIWJ} is that the points in the discrete set $S$ are not significantly periodic at any frequency $\xi$, which heuristically (à la Weyl's equidistribution theorem) implies such points are evenly distributed in $\TT^d$. We do not need equidistribution here, but we are able to obtain in Lemma \ref{LemmaTAOIAWOIDJ12301} that for any $\varepsilon > 0$, if $N$ is taken appropriately large, then the set $S$ guaranteed by the assumptions of Lemma \ref{LemmaFIOAJFOIWJ} is $\varepsilon$-dense in $\TT^d$, i.e. $\TT^d = N(S,\varepsilon)$. This will allow us to approximate an arbitrary compact set of dimension $\beta$ by subsets of $S$, and thus approximate an arbitrary element of $\mathcal{X}_\beta$ by an element which is pattern avoiding, thus guaranteeing that sets containing patterns are of first category.

Because the proof of Lemma \ref{LemmaFIOAJFOIWJ} is somewhat technical, we relegate it to an appendix found at the end of this paper. In the remainder of this section, we use Lemma \ref{LemmaFIOAJFOIWJ}, together with a probabilistic argument to argue some general results about $\mathcal{X}_\beta$ which will be useful in the proofs of Theorems \ref{maintheorem}, \ref{theoremJOICVIOJVI122}, and \ref{thirdTheorem}.

It is a useful heuristic that in a metric space whose elements are sets, quasi-all elements are as `thin as possible'. In particular, we should expect the Hausdorff dimension and Fourier dimension of a generic element of $\mathcal{X}_\beta$ to be as low as possible. For each $(E,\mu) \in \mathcal{X}_\beta$, the condition that $\mu \in M_*(\beta)$ implies that $\fordim(\mu) \geq \beta$, so $\hausdim(E) \geq \fordim(E) \geq \beta$. Since the Fourier dimension and Hausdorff dimension are lower bounded by $\beta$, and this bound is tight, our heuristic thus leads us to believe that for quasi-all $(E,\mu) \in M_*(\beta)$, the set $E$ has both Hausdorff dimension and Fourier dimension equal to $\beta$, i.e. $E$ is a Salem set of dimension $\beta$. We will finish this section with a proof of this fact. This will also give some more elementary variants of the kinds of probabilistic arguments we will later use to prove Theorems \ref{maintheorem}, \ref{theoremJOICVIOJVI122}, and \ref{thirdTheorem}, which will allow us to become more comfortable with these techniques in preparation for the proofs of these theorems.

\begin{lemma} \label{LemmaGISCICS1}
    Fix a positive integer $N$, and $\kappa > 0$. Let $X_1, \dots, X_N$ be independent random variables on $\TT^d$, such that for each $\xi \in \ZZ^d - \{ 0 \}$,
    \begin{equation} \label{equatioNVVVVSXXJVU1132}
        \sum_{k = 1}^N \EE \left( e^{2 \pi i \xi \cdot X_k} \right) = 0.
    \end{equation}
    Then there exists a constant $C$ depending on $d$ and $\kappa$ such that
    \[ \PP \left( \sup_{|\xi| \leq N^{1 + \kappa}} \left| \frac{1}{N} \sum_{k = 1}^N e^{2 \pi i \xi \cdot X_k} \right| \geq C N^{-1/2} \log(N)^{1/2} \right) \leq 1/10. \]
\end{lemma}

\begin{remark}
    In particular, the assumptions of Lemma \ref{LemmaGISCICS1} hold if the random variables $\{ X_1,\dots, X_N \}$ are uniformly distributed on $\TT^d$, since then $\EE(e^{2 \pi i \xi \cdot X_k}) = 0$ for all $\xi \in \ZZ^d - \{ 0 \}$ and $1 \leq k \leq N$.
\end{remark}

\begin{proof}
    For each $\xi \in \ZZ^d$ and $k \in \{ 1, \dots, N \}$, consider the random variable
    \[ Y(\xi,k) = N^{-1} e^{2 \pi i \xi \cdot X_k}. \]    
    Then for each $\xi \in \ZZ^d$,
    \begin{equation} \label{equationPPDOCS999223}
        \sum_{k = 1}^N Y(\xi,k) = \frac{1}{N} \sum_{k = 1}^N e^{2 \pi i \xi \cdot X_k}.
    \end{equation}
    We also note that for each $\xi \in \ZZ^d$ and $k \in \{ 1, \dots, N \}$,
    \begin{equation} \label{equationGFDSCSXAOOO99}
        |Y(\xi,k)| = N^{-1}.
    \end{equation}
    Moreover,
    \begin{equation} \label{equationDOIJWIJCCCCC5555322}
    \begin{split}
        \sum_{k = 1}^N \EE(Y(\xi,k)) = 0.
    \end{split}
    \end{equation}
    Since the family of random variables $\{ Y(\xi,k) \}$ is independent for a fixed $\xi$, we can apply Hoeffding's inequality together with \eqref{equationPPDOCS999223} and \eqref{equationGFDSCSXAOOO99} to conclude that for all $t \geq 0$,
    \begin{equation} \label{equationDDISCCOXOSPP998323}
        \PP \left( \left| \frac{1}{N} \sum_{k = 1}^N e^{2 \pi i \xi \cdot X_k} \right| \geq t \right) \leq 2 e^{-Nt^2/2}.
    \end{equation}
    Taking a union bound obtained by applying \eqref{equationDDISCCOXOSPP998323} over all $|\xi| \leq N^{1 + \kappa}$ gives the existence of a constant $C \geq 10$ depending on $d$ and $\kappa$ such that
    \begin{equation} \label{equationPPDOCS2424}
        \PP \left( \sup_{|\xi| \leq N^{1 + \kappa}} \left| \frac{1}{N} \sum_{k = 1}^N e^{2 \pi i \xi \cdot X_k} \right| \geq t \right) \leq \exp \left( C \log(N) - \frac{5N t^2}{C} \right).
    \end{equation}
    But then setting $t = CN^{-1/2} \log(N)^{1/2}$ in \eqref{equationPPDOCS2424} completes the proof.
\end{proof}

\begin{lemma} \label{lemmaoiajdoijwdowaj}
    For quasi-all $(E,\mu) \in \mathcal{X}_\beta$, $E$ is a Salem set of dimension $\beta$.
\end{lemma}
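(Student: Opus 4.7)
Since every pair $(E,\mu) \in \mathcal{X}_\beta$ already satisfies $\hausdim(E) \geq \fordim(E) \geq \beta$, it suffices to show $\hausdim(E) \leq \beta$ for quasi-all $(E,\mu)$. For each pair of rationals $\gamma \in (\beta,d]$ and $\varepsilon > 0$, I would consider the set
\[ \mathcal{U}(\gamma,\varepsilon) = \left\{ (E,\mu) \in \mathcal{X}_\beta : \text{$E$ admits a finite cover $\bigcup_{i=1}^M B_{r_i}(x_i)$ with $\sum_{i=1}^M r_i^\gamma < \varepsilon$} \right\}. \]
Openness is a routine continuity argument: given a witnessing cover with strict inequality, slightly enlarge each $r_i$ to $r_i' > r_i$ while keeping $\sum_{i=1}^M (r_i')^\gamma < \varepsilon$, so that any compact set within Hausdorff distance $\min_i(r_i' - r_i)$ of $E$ is also covered. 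Once each $\mathcal{U}(\gamma,\varepsilon)$ is shown to be dense, the Baire category theorem produces a comeager set of pairs lying in $\bigcap \mathcal{U}(\gamma,\varepsilon)$ over all rational $\gamma \in (\beta,d]$ and $\varepsilon > 0$; each such $E$ has $H^\gamma(E) = 0$ for every rational $\gamma > \beta$, whence $\hausdim(E) \leq \beta$, matching the a priori lower bound to produce a Salem set of dimension exactly $\beta$.

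For density I would invoke Lemma \ref{LemmaFIOAJFOIWJ} with $\mathcal{A}$ the (manifestly monotone) family of compact subsets of $\TT^d$ admitting such a cover. Given the parameters $\kappa > 0$ and $\lambda \in [0,\beta)$ demanded by that lemma, pick an auxiliary exponent $\beta' \in (\lambda,\beta)$, set $N = \lceil r^{-\beta'} \rceil$, sample $X_1,\dots,X_N$ independently and uniformly on $\TT^d$, and take weights $a(X_k) = 1$ so that $\sum a(X_k) = N$. Uniformity forces $\EE(e^{2\pi i \xi \cdot X_k}) = 0$ for every nonzero $\xi \in \ZZ^d$, so Lemma \ref{LemmaGISCICS1}, applied with an exponent $\kappa' \geq (1+\kappa)/\beta' - 1$ so that $N^{1+\kappa'} \geq (1/r)^{1+\kappa}$, gives with probability at least $9/10$ the bound
\[ \sup_{0 \neq |\xi| \leq (1/r)^{1+\kappa}} \left| \frac{1}{N} \sum_{k=1}^N e^{2\pi i \xi \cdot X_k} \right| \leq C N^{-1/2} \log(N)^{1/2} \leq C N^{-1/2} \log(N), \]
which is exactly property (2) of Lemma \ref{LemmaFIOAJFOIWJ} with $\delta = 0$. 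Property (1) holds deterministically once $r$ is small: the $N$ balls $\{B_r(X_k)\}$ cover $S_r$ with total $\gamma$-weight $Nr^\gamma \lesssim r^{\gamma - \beta'}$, which tends to $0$ as $r \to 0$ since $\gamma > \beta > \beta'$. Both conditions thus hold simultaneously on an event of positive probability, so a deterministic witness $S$ exists, and Lemma \ref{LemmaFIOAJFOIWJ} delivers density.

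The only delicate point is a quantifier interplay rather than a genuine conceptual obstacle: the frequency window $|\xi| \leq (1/r)^{1+\kappa}$ on which square-root cancellation is required is in general much wider than the window $|\xi| \leq N^{1+\kappa}$ one would obtain from Lemma \ref{LemmaGISCICS1} applied with the same $\kappa$, which is why one must feed in the inflated parameter $\kappa'$; the freedom to do so is built into the statement of Lemma \ref{LemmaGISCICS1}. Beyond that bookkeeping, the proof rests on two standard observations that package together very cleanly: that i.i.d.\ uniform samples on $\TT^d$ already realise Fourier square-root cancellation on wide frequency windows, and that a union of $N \approx r^{-\beta'}$ balls of radius $r$ has arbitrarily small $\gamma$-dimensional Hausdorff content once $\gamma > \beta'$ and $r$ is small.
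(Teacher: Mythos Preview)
Your proof is correct and follows essentially the same route as the paper's: reduce to density of open sets where $E$ has small Hausdorff content, then feed i.i.d.\ uniform points into Lemma~\ref{LemmaFIOAJFOIWJ} with square-root cancellation supplied by Lemma~\ref{LemmaGISCICS1}. The paper sets $r = N^{-1/\beta}$ (i.e.\ your $\beta' = \beta$) and works with $H^\alpha_\delta$ rather than $H^\gamma_\infty$, but these are cosmetic; your explicit handling of the $\kappa$ versus $\kappa'$ frequency-window bookkeeping is in fact cleaner than the paper's own treatment.
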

\begin{proof}
    We shall assume $\beta < d$ in the proof, since when $\beta = d$, $E$ is a Salem set for any $(E,\mu) \in \mathcal{X}_\beta$, and thus the result is trivial. Since the Hausdorff dimension of a measure is an upper bound for the Fourier dimension, it suffices to show that for quasi-all $(E,\mu) \in \mathcal{X}_\beta$, $E$ has Hausdorff dimension at most $\beta$. For each $\alpha > \beta$ and $\delta, s > 0$, we let
    \[ \mathcal{A}(\alpha,\delta,s) = \{ E \subset \TT^d: H^\alpha_\delta(E) < s \}. \]
    and set
    \[ A(\alpha,\delta,s) = \{ (E,\mu) \in \mathcal{X}_\beta: E \in \mathcal{A}(\alpha,\delta,s) \}. \]
    Then $A(\alpha,\delta,s)$ is an open subset of $\mathcal{X}_\beta$, and
    \begin{equation}
        \bigcap_{n = 1}^\infty \bigcap_{m = 1}^\infty \bigcap_{k = 1}^\infty A(\beta + 1/n, 1/m, 1/k)
    \end{equation}
    is precisely the family of $(E,\mu) \in \mathcal{X}_\beta$ such that $E$ has Hausdorff dimension at most $\beta$. Thus it suffices to show that $A(\alpha,\delta,s)$ is dense in $\mathcal{X}_\beta$ for all $\alpha > \beta$, all $\delta > 0$, and all $s > 0$. Since $\mathcal{A}(\alpha,\delta,s)$ is a \emph{downward closed} family of subsets of $\TT^d$, we may apply Lemma \ref{LemmaFIOAJFOIWJ}. Fix a large integer $N$, and set $r = N^{-1/\beta}$, so that $N \geq (1/2) r^{-\lambda}$ satisfies the condition for Lemma \ref{LemmaFIOAJFOIWJ} to apply to these quantities. Lemma \ref{LemmaGISCICS1} shows that there exists a constant $C$ depending on $\beta$ and $d$, as well as $N$ points $S = \{ x_1, \dots, x_N \} \subset \TT^d$ such that for each $|\xi| \leq N^{1 + \kappa}$,
    \begin{equation} \label{equationDDVVIXXSX23}
        \left| \frac{1}{N} \sum_{k = 1}^N e^{2 \pi i \xi \cdot x_k} \right| \leq C N^{-1/2} \log(N)^{1/2}.
    \end{equation}
    Now $N(S,r)$ is a union of $N$ balls of radius $r$, and thus if $r \leq \delta$,
    \begin{equation}
        H^\alpha_\delta(N(S,r)) \leq N r^\alpha = N^{1 - \alpha / \beta}.
    \end{equation}
    Since $\alpha > \beta$, taking $N$ appropriately large gives a set $N(S,r)$ with
    \begin{equation}
        H^\alpha_\delta(N(S,r)) < s.
    \end{equation}
    Thus $N(S,r) \in \mathcal{A}(\alpha,\delta,s)$ for sufficiently large integers $N$. But together with \eqref{equationDDVVIXXSX23}, this justifies that the hypothesis of Lemma \ref{LemmaFIOAJFOIWJ} applies to this scenario. Thus that lemma implies that $A(\alpha,\delta,s)$ is dense in $\mathcal{X}_\beta$, completing the proof.
\end{proof}

This concludes the setup to the proof of Theorems \ref{maintheorem}, \ref{theoremJOICVIOJVI122}, and \ref{thirdTheorem}. All that remains is to show that quasi-all elements of $\mathcal{X}_\beta$ avoid the given set $Z$ for a suitable parameter $\beta$; it then follows from Lemma \ref{lemmaoiajdoijwdowaj} that quasi-all elements of $\mathcal{X}_\beta$ are \emph{Salem} and avoid the given set $Z$ (since the intersection of two generic subsets of $\mathcal{X}_\beta$ is also generic). The advantage of Lemma \ref{LemmaFIOAJFOIWJ}, combined with a Baire category argument, is that we can now reduce our calculations to finding suitable finite families of points with nice Fourier analytic properties.

\section{Random Avoiding Sets for Rough Patterns}

We begin by proving Theorem \ref{maintheorem}, which requires simpler calculations than Theorem \ref{theoremJOICVIOJVI122} and Theorem \ref{thirdTheorem}. In the last section, our results held for an arbitrary $\beta \in (0,d]$. But in this section, we assume
\[ \beta \leq \min \left( d, \frac{dn - \alpha}{n - 1/2} \right), \]
where $\alpha$ and $n$ are as in the statement of Theorem \ref{maintheorem}. Then $\beta$ is small enough to show that the pattern $Z$ described in Theorem \ref{maintheorem} is avoided by a generic element of $\mathcal{X}_\beta$. The construction here is very similar to the construction in \cite{OurPaper}, albeit in a Baire category setting, and with modified parameters to ensure a Fourier dimension bound rather than just a Hausdorff dimension bound.

\begin{lemma} \label{LemmaVIVIJCIJSIJ}
    Let $Z \subset \TT^{dn}$ be a compact set with lower Minkowski dimension at most $\alpha$. Then for quasi-all $(E,\mu) \in \mathcal{X}_\beta$, for any distinct points $x_1, \dots, x_n \in E$, $(x_1, \dots, x_n) \not \in Z$.
\end{lemma}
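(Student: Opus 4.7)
The plan is to reduce the statement to Lemma \ref{LemmaFIOAJFOIWJ} by a Baire category argument. For each positive integer $K$, let
\[ \mathcal{A}_K = \{ E \in \mathcal{E} : \forall (x_1,\dots,x_n) \in E^n \text{ with } \min_{i \neq j} |x_i - x_j| \geq 1/K, \, (x_1,\dots,x_n) \notin Z \}. \]
Each $\mathcal{A}_K$ is monotone, and a compactness argument using that $Z$ is closed shows that $\{ (E,\mu) \in \mathcal{X}_\beta : E \in \mathcal{A}_K \}$ is an \emph{open} subset of $\mathcal{X}_\beta$. Since $E$ avoids $Z$ at distinct tuples exactly when $E \in \bigcap_K \mathcal{A}_K$, it suffices to show that each such open set is dense in $\mathcal{X}_\beta$, which I will do by verifying the hypotheses of Lemma \ref{LemmaFIOAJFOIWJ} applied to $\mathcal{A}_K$.

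Fix $K$, $\kappa > 0$, and $\lambda \in [0,\beta)$. Since $\lowminkdim(Z) \leq \alpha$, for any $\alpha' > \alpha$ there exist arbitrarily small $r > 0$ with $|Z_{\sqrt{n}r}| \lesssim r^{dn - \alpha'}$. Pick such an $r$ with $r < 1/(3K)$ and set $N = \lfloor r^{-\beta} \rfloor$, so $N \geq (1/2) r^{-\lambda}$. Let $X_1,\dots,X_N$ be independent and uniformly distributed on $\TT^d$. Lemma \ref{LemmaGISCICS1} yields
\[ \sup_{0 < |\xi| \leq N^{1+\kappa}} \left| \frac{1}{N} \sum_{k=1}^N e^{2\pi i \xi \cdot X_k} \right| \lesssim N^{-1/2} \log(N)^{1/2} \]
with probability at least $9/10$. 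Meanwhile, the expected number $T$ of tuples of distinct indices $(j_1,\dots,j_n)$ with $(X_{j_1},\dots,X_{j_n}) \in Z_{\sqrt{n}r}$ is at most $N^n |Z_{\sqrt{n}r}| \lesssim N^n r^{dn - \alpha'}$. Because $\beta \leq (dn-\alpha)/(n-1/2)$, I can pick $\alpha'$ close enough to $\alpha$ that $\beta(n-1/2) \leq dn - \alpha'$, and then $N^n r^{dn - \alpha'} \leq r^{-\beta/2} = N^{1/2}$. Markov's inequality gives $T \leq 10 N^{1/2}$ with probability at least $9/10$, so I can fix a realization on which both events hold.

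Greedily set $a(x_k) = 0$ for at most one point per bad tuple, and $a(x_k) = 1$ for the rest, producing a set $S$. The number of removed points obeys $R \leq T \leq 10 N^{1/2}$, so $\sum_k a(x_k) \geq N - R \geq N/2$ once $N$ is large. Because $r < 1/(3K)$, any $(y_1,\dots,y_n) \in (S_r)^n$ with pairwise separation $\geq 1/K$ forces $y_i \in B_r(x_{j_i})$ for distinct surviving indices $j_i$, and $(y_1,\dots,y_n) \in Z$ would imply $(x_{j_1},\dots,x_{j_n}) \in Z_{\sqrt{n}r}$, contradicting the removal procedure; hence $S_r \in \mathcal{A}_K$. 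Finally,
\[ \left| \frac{1}{N} \sum_k a(x_k) e^{2\pi i \xi \cdot x_k} \right| \leq \left| \frac{1}{N} \sum_{k=1}^N e^{2\pi i \xi \cdot X_k} \right| + \frac{R}{N} \lesssim N^{-1/2} \log(N)^{1/2} + N^{-1/2} \lesssim N^{-1/2} \log(N), \]
verifying hypothesis (2) of Lemma \ref{LemmaFIOAJFOIWJ} with $\delta = 0$.

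The crux of the argument is the balancing that produces the exponent $n - 1/2$. The $R$ removed points contribute a deterministic $O(R/N)$ error term with no probabilistic cancellation available, so to preserve the target $O(N^{-1/2})$ bound on the exponential sum I can only afford $R = O(\sqrt{N})$ rather than the $R = O(N)$ I would get away with for a Hausdorff-dimension bound. Matching the expected bad-tuple count $\EE(T) \lesssim N^n r^{dn-\alpha}$ against the budget $\sqrt{N}$ with $N = r^{-\beta}$ rearranges precisely to $\beta \leq (dn-\alpha)/(n-1/2)$, which explains why this exponent sits strictly between the Hausdorff threshold $(dn-\alpha)/(n-1)$ from \cite{OurPaper} and the naive Fourier threshold $(dn-\alpha)/n$ from \cite{MyThesis}.
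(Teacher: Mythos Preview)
Your argument is essentially identical to the paper's: the same Baire decomposition via pairwise-separated tuples, the same random construction with i.i.d.\ uniform points, the same Markov bound on bad tuples, and the same removal-plus-triangle-inequality to recover the exponential sum estimate. The paper removes the \emph{last} index of each bad tuple rather than ``one point per tuple'', but the effect is the same.

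There is one minor slip. You set $N = \lfloor r^{-\beta} \rfloor$ and then claim you can pick $\alpha' > \alpha$ with $\beta(n-1/2) \leq dn - \alpha'$. At the endpoint $\beta = (dn-\alpha)/(n-1/2)$ this forces $\alpha' \leq \alpha$, which is impossible. The fix, which is exactly what the paper does, is to set the number of points to be $\approx r^{-\lambda}$ for the given $\lambda \in [0,\beta)$ from the hypothesis of Lemma~\ref{LemmaFIOAJFOIWJ}; then $\lambda(n-1/2) < dn - \alpha$ strictly, leaving room to choose $\alpha' > \alpha$ (the paper's $\gamma$). Since Lemma~\ref{LemmaFIOAJFOIWJ} only requires $N \geq (1/2)r^{-\lambda}$ for each $\lambda < \beta$, nothing is lost by this change, and everything else in your argument goes through unchanged.
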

\begin{proof}
    For any $s > 0$, consider the set
    \[ \mathcal{B}(Z,s) = \left\{ E \subset \TT^d: \begin{array}{c}
            \text{for all $x_1, \dots, x_n \in E$ such that}\\
            \text{$|x_i - x_j| \geq s$ for $i \neq j$, $(x_1, \dots, x_n) \not \in Z$}
        \end{array} \right\}, \]
    and
    \[ B(Z,s) = \{ (E,\mu) \in \mathcal{X}_\beta: E \in \mathcal{B}(Z,s) \}. \]
    Then $B(Z,s)$ is open in $\mathcal{X}_\beta$, and
    \begin{equation}
        \bigcap_{k = 1}^\infty B(Z,1/k)
    \end{equation}
    %
    consists of the family of sets $(E,\mu)$ such that for distinct $x_1, \dots, x_n \in E$, $(x_1, \dots, x_n) \not \in Z$. Now for each $k$, the set $\mathcal{B}(Z,1/k)$ is a downward closed family, which means that, after we verify the appropriate hypotheses, we can apply Lemma \ref{LemmaFIOAJFOIWJ} to prove $B(Z,s)$ is dense in $\mathcal{X}_\beta$ for each $s > 0$, which would complete the proof. Thus we must construct a set $S = \{ x_1, \dots, x_N \}$ for $N$ which can be made arbitrarily large, such that $N(S,r) \in \mathcal{B}(Z,s)$ and associate with the set an exponential sum satisfying a square root cancellation bound.

    Let $c = 2 n^{1/2}$. Since $Z$ has lower Minkowski dimension at most $\alpha$, for any $\gamma \in (\alpha,dn]$, we can find arbitrarily small $r \in (0,1)$ such that
    \begin{equation} \label{equationGGSCSAS}
        |N(Z, cr)| \leq r^{dn - \gamma}.
    \end{equation}
    Pick $\lambda \in [0,(dn - \gamma)/(n-1/2))$, and suppose that we can find an integer $M \geq 10$ with
    \begin{equation} \label{equationICCISAXAX122412}
        r^{-\lambda} \leq M \leq r^{-\lambda} + 1.
    \end{equation}
    Let $X_1, \dots, X_M$ be independent and uniformly distributed on $\TT^d$. For each distinct set of indices $k_1, \dots, k_n \in \{ 1, \dots, M \}$, the random vector $X_k = (X_{k_1}, \dots, X_{k_n})$ is uniformly distributed on $\TT^{nd}$, and so \eqref{equationGGSCSAS} and \eqref{equationICCISAXAX122412} imply that
    \begin{equation} \label{equationGGASDCJWIJSFGGGG}
        \PP(d(X_k,Z) \leq cr) \leq |N(Z,cr)| \leq r^{dn - \gamma} \lesssim M^{- \frac{dn - \gamma}{\lambda}} \leq M^{-(n-1/2)},
    \end{equation}
    If $M_0$ denotes the number of indices $k$ such that $d(X_k,Z) \leq cr$, then by linearity of expectation, since there are at most $M^n$ such indices, we conclude from \eqref{equationGGASDCJWIJSFGGGG} that there is a constant $C > 0$ such that
    \begin{equation} \label{equationDDASGVV}
        \EE(M_0) \leq (C/10) M^{1/2}.
    \end{equation}
    Applying Markov's inequality to \eqref{equationDDASGVV}, we conclude that
    \begin{equation} \label{equationFGGGSC}
        \PP(M_0 \geq C M^{1/2}) \leq 1/10.
    \end{equation}
    %
    %
    %
    %
    %
    %
    %
    Fix some small $\kappa > 0$. Taking a union bound to \eqref{equationFGGGSC} 
    and the results of Lemma \ref{LemmaGISCICS1}, we conclude that if $M$ is sufficiently large, there exists $M$ distinct points $x_1, \dots, x_M \in \TT^d$ and a constant $C > 0$ such that the following two statements hold:
    \begin{itemize}
        \item[(1)] Let $I$ be the set of indices $k_n \in \{ 1, \dots, M \}$ with the property that we can find distinct indices $k_1, \dots, k_{n-1} \in \{ 1, \dots, M \}$ such that if $X = (X_{k_1}, \dots, X_{k_n})$, then $d(X,Z) \leq cr$. Then $\#(I) \leq C M^{1/2}$.

        \item[(2)] For $0 < |\xi| \leq M^{1 + \kappa}$,
        \[ \left| \frac{1}{M} \sum_{k = 1}^M e^{2 \pi i \xi \cdot x_k} \right| \leq C M^{-1/2} \log(M)^{1/2}. \]
    \end{itemize}
    Now set $S = \{ x_k : k \not \in I \}$ and let $N = \#(S)$. Then Property (1) implies that
    \begin{equation}
        N \geq M - \#(I) \geq M - C M^{1/2}.
    \end{equation}
    Thus for $M \geq 4C^2$,
    \begin{equation} \label{equatiojnOICOJWIAOJDIOj21i4j214ioJOIJIOJD}
        N \geq (1/2) M \geq (1/2) r^{-\lambda}.
    \end{equation}
    Property (1) and (2) imply that for $0 < |\xi| \leq N^{1 + \kappa}$,
    \begin{equation} \label{equationFIOJIOJOIadawda}
    \begin{split}
        \left| \frac{1}{N} \sum_{x \in S} e^{2 \pi i \xi \cdot x} \right| &\leq \left| \frac{1}{N} \sum_{k = 1}^N e^{2 \pi i \xi \cdot x_k} \right| + \left| \frac{1}{N} \sum_{k \in I} e^{2 \pi i \xi \cdot x_k} \right|\\
        &\leq 2C M^{-1/2} \log(M)^{1/2} + \#(I)/N \\
        &\lesssim N^{-1/2} \log(N)^{1/2} + N^{-1/2}\\
        &\lesssim N^{-1/2} \log(N)^{1/2}.
    \end{split}
    \end{equation}
    As long as we can show that $N(S,r) \in \mathcal{B}(Z,s)$, then \eqref{equatiojnOICOJWIAOJDIOj21i4j214ioJOIJIOJD} and \eqref{equationFIOJIOJOIadawda} allows us to apply Lemma \ref{LemmaFIOAJFOIWJ}, completing the proof that $B(Z,s)$ is dense. To check this, consider $n$ points $y_1,\dots,y_n \in N(S,r)$, with $|y_i - y_j| \geq s$ for any two indices $i \neq j$. Provided that $s > 10r$, we can therefore find distinct indices $k_1, \dots, k_n \not \in I$ such that for each $i \in \{ 1, \dots, n \}$, $|x_{k_i} - y_i| \leq r$, which means if we set $x = (x_{k_1}, \dots, x_{k_n})$ and $y = (y_1, \dots, y_n)$, then
    \begin{equation} \label{equationFISICISCI232222452}
        |x - y| \leq cr/2.
    \end{equation}
    Since $k_n \not \in I$, $d(x,Z) \geq cr$, which combined with \eqref{equationFISICISCI232222452} implies
    \begin{equation} \label{equationSICSICI}
        d(y,Z) \geq d(x,Z) - |x - y| \geq cr/2.
    \end{equation}
    Thus in particular, we conclude $y \not \in Z$. But this means we have proved precisely that $N(S,r) \in \mathcal{B}(Z,s)$. Thus Lemma \ref{LemmaFIOAJFOIWJ} implies that $B(Z,s)$ is dense in $\mathcal{X}_\beta$ for each $s > 0$, completing the proof.
\end{proof}

The Baire category theorem, applied to the result of Lemma \ref{LemmaVIVIJCIJSIJ}, shows that a pattern avoiding set exists in $\mathcal{X}_\beta$, completing the proof of Theorem \ref{maintheorem}. Before we move onto the proof of Theorem \ref{theoremJOICVIOJVI122}, let us discuss the main obstacle which prevents us from finding Salem sets with dimension
\begin{equation} \label{equationOIOIVJIOVJ2131245151csacs}
    \frac{dn - \alpha}{n - 1},
\end{equation}
avoiding the pattern $Z$, instead only obtaining Salem sets with dimension at most
\begin{equation} \label{equationCIONCIOJIOAJ12312}
    \frac{dn - \alpha}{n-1/2},
\end{equation}
We begin by noting that the quantity \eqref{equationOIOIVJIOVJ2131245151csacs} is the maximum dimension one can obtain using the randomized selection method used in Lemma \ref{LemmaVIVIJCIJSIJ} to choose the set $S$, since if one alters the parameters used in the proof so that $M \gtrsim r^{-\frac{dn-\alpha}{n-1}}$, then the expectation bounds used in the proof above to control $M_0$ cannot even guarantee that $S$ is non-empty with positive probability. More precisely, for general parameters $M$ and $r$, one can guarantee with high probability that $\#(I) \lesssim M^n r^{dn - \alpha}$. For $M \gg r^{-\frac{dn-\alpha}{n-1}}$, one also expects to have $\#(I) \gg M$, and in this situation we will have $S = \emptyset$, so the construction above does not work at all. In this proof however, we were forced to choose $M$ much smaller than $r^{-\frac{dn-\alpha}{n-1}}$, i.e. we chose $M \approx r^{-\frac{dn - \alpha}{n - 1/2}}$ so that we could guarantee that $\#(I) \lesssim N^{1/2}$. The importance of this is that the trivial bound
\begin{equation}
    \left| \sum_{k \in I} e^{2 \pi i (\xi \cdot X_k)} \right| \leq \#(I)
\end{equation}
obtained by the triangle inequality was then enough to obtain the square root cancellation bound in equation \eqref{equationFIOJIOJOIadawda}. On the other hand, if we were able to show that the set $I$ itself satisfied a square root cancellation bound of the form
\begin{equation} \label{equationDOIJCVOIVJOI213123}
    \left| \sum_{k \in I} e^{2 \pi i (\xi \cdot X_k)} \right| \lesssim \#(I)^{1/2},
\end{equation}
then there would be no barrier to choosing $M \approx r^{-\frac{dn-\alpha}{n-1}}$, which would allow us to prove the existence of a pattern avoiding set with Fourier dimension matching the quantity in equation \eqref{equationCIONCIOJIOAJ12312}, matching that of the Hausdorff dimension bound obtained in \cite{OurPaper}. Under stronger assumptions on the pattern we are trying to avoid, which form the hypotheses of Theorem \ref{theoremJOICVIOJVI122}, we are able to justify that some kind of square root cancellation, like that of \eqref{equationDOIJCVOIVJOI213123} takes place, though with an additional term that we are only able to bound appropriately for $n > 2$ using an inclusion-exclusion argument combined with some oscillatory integrals if we set $N \approx r^{-\frac{dn-\alpha}{n-3/4}}$. Under the hypothesis of Theorem \ref{thirdTheorem}, we are able to make this additional term vanish completely, which will enable us to set $N \approx r^{-\frac{dn - \alpha}{n - 1}}$, thus obtaining sets with Fourier dimension matching the quantity in equation \eqref{equationCIONCIOJIOAJ12312}, and completely recovering the dimension bound of \cite{OurPaper} in the setting of Salem sets.

\section{Concentration Bounds for Smooth Surfaces}

In this section we prove Theorem \ref{theoremJOICVIOJVI122} using some more robust probability concentration calculations, which allow us to justify the kinds of square root cancellation alluded to at the end of the last section. We set
\[ \beta \leq \begin{cases} d &: n = 2 \\ d/(n - 3/4) &: n \geq 3 \end{cases}. \]
For such $\beta$, we now prove that elements of the space $\mathcal{X}_\beta$ will generically avoid patterns given by an equation $x_n = f(x_1,\dots,x_{n-1})$, where $f$ satisfies the hypotheses of Theorem \ref{theoremJOICVIOJVI122}.

\begin{lemma} \label{lemmaOIOICJOIJOISJOIJS}
    Suppose $f: V \to \TT^d$ satisfy the hypothesis of Theorem \ref{theoremJOICVIOJVI122}. Then for quasi-all $(E,\mu) \in \mathcal{X}_\beta$, and for any distinct points $x_1,\dots,x_n \in E$, $x_n \neq f(x_1,\dots,x_{n-1})$.
\end{lemma}
\begin{proof}
    Given any family of disjoint, closed cubes $R_1,\dots,R_n \subset \TT^d$ such that $(R_1 \times \dots \times R_n) \cap V$ is a closed set, we let
    \[ \mathcal{H}(R_1,\dots,R_n) = \{ E \subset \TT^d: \text{for all $x_i \in R_i \cap E$, $x_n \neq f(x_1,\dots,x_{n-1})$} \}, \]
    and let
    \[ H(R_1,\dots,R_n) = \{ (E,\mu) \in \mathcal{X}_\beta: E \in \mathcal{H}(R_1,\dots,R_n) \}. \]
    Then $H(R_1,\dots,R_n)$ is an open subset of $\mathcal{X}_\beta$. For the purpose of a Baire category argument, this proof will follow by showing $H(R_1,\dots,R_n)$ is dense in $\mathcal{X}_\beta$ for any family of disjoint cubes $\{ R_1,\dots, R_n \}$, each having common sidelength $s$ for some $s > 0$, such that if $Q_i = 2R_i$ for each $i$, then $Q_2 \times \dots \times Q_n \subset V$, and $d(R_i,R_j) \geq 10s$ for each $i \neq j$. Since $\mathcal{H}(R_1,\dots,R_n)$ is a downward closed family of sets, we will prove this result by applying Lemma \ref{LemmaFIOAJFOIWJ}. Thus for a suitable choice of $r > 0$, we must construct a large discrete set $S$ such that $N(S,r) \in \mathcal{H}(R_1,\dots,R_n)$ and whose exponential sums exhibit square root cancellation.

    Since $f$ is smooth, we can fix a constant $L \geq 0$ such that for any $x,y \in Q_1 \times \dots \times Q_{n-1}$,
    \begin{equation} \label{eqiatiojawoij2134141235235231}
        |f(x) - f(y)| \leq L|x - y|.
    \end{equation}
    Fix a family of non-negative bump functions $\psi_0,\psi_1,\dots,\psi_n \in C^\infty(\TT^d)$, such that for $i \in \{ 1,\dots,n \}$, $\psi_i(x) = 1$ for $x \in 1.5 \cdot Q_i$, $\psi_i(x) = 0$ for $x \not \in R_i$, and $\psi_0(x) + \dots + \psi_n(x) = 1$ for $x \in \TT^d$. For $i \in \{ 0, \dots, n \}$, let $A_i = \int \psi_i(x)\; dx$ denote the total mass of $\psi_i$. Now fix a large integer $M > 0$, and consider a family of independent random variables
    \[ \{ X_i(k) : 0 \leq i \leq n, 1 \leq k \leq M \}, \]
    where the random variable $X_i(k)$ is chosen with respect to the probability density function $A_i^{-1} \psi_i$. Fix $\lambda \in [0,\beta)$, and set $r = M^{-1/\lambda}$, i.e. so that $M = r^\lambda$. Let $c = 2 n^{1/2} (L+1)$, and define $I$ to be the set of all indices $k_n \in \{ 1, \dots, M \}$ such that there are indices $k_1,\dots,k_{n-1} \in \{ 1,\dots,M \}$ with the property that
    \begin{equation} \label{equationDDIVJIVCJIJSDDKODKI23125}
        |X_n(k_n) - f(X_1(k_1),\dots,X_{n-1}(k_{n-1}))| \leq cr.
    \end{equation}
    Now \eqref{equationDDIVJIVCJIJSDDKODKI23125} implies that if $k_n \not \in I$, then for any $k_1,\dots,k_{n-1} \in \{ 1, \dots, M \}$,
    \begin{equation}
        |X_n(k_n) - f(X_1(k_1),\dots,X_{n-1}(k_{n-1}))| > cr.
    \end{equation}
    Thus if we set
    \[ S = \{ X_i(k) : 0 \leq i \leq n-1, 1 \leq k \leq M \} \cup \{ X_n(k) : k \not \in I \} \]
    Then we claim that $N(S,r) \in \mathcal{H}(R_1,\dots,R_n)$ for suitably small $r$; to see this, suppose there were distinct $y_1,\dots,y_n \in N(S,r)$ such that $y_1 \in R_1, \dots, y_n \in R_n$, and $y_n = f(y_1,\dots,y_{n-1})$. We may pick $x_1,\dots,x_n \in S$ such that $|x_i - y_i| \leq r$ for each $i$. Since $d(R_i,R_j) = 10s$ for $i \neq j$, if $r \leq s$, then it cannot be true that $x_i = X_j(k)$ for some $j \in \{ 1, \dots, n \}$ and $k \in \{ 1, \dots, M \}$. Since $\psi_i(x) = 1$ on $1.5 R_i$, if $r < 0.5 s$, we have $d(\text{supp}(\psi_0), R_i) \geq 0.5 s$ and so it also cannot be true that $x_i = X_0(k)$ for some $k \in \{ 1, \dots, M \}$. Thus there must be $k_i \in \{ 1, \dots, M \}$ such that $x_i = X_i(k_i)$. But by assumption $k_n \not \in I$, so we have
    \begin{equation} \label{equationODIJDOJWADOIJAWOIJ12312}
        |X_n(k_n) - f(X_1(k_1),\dots,X_{n-1}(k_{n-1}))| > cr.
    \end{equation}
    Thus \eqref{eqiatiojawoij2134141235235231} and \eqref{equationODIJDOJWADOIJAWOIJ12312} imply that
    \begin{equation}
        0 = |y_n - f(y_1,\dots,y_n)| \geq cr > 0,
    \end{equation}
    which gives a contradiction, proving that $N(S,r) \in \mathcal{H}(R_1,\dots,R_n)$. The remainder of the proof focuses on bounding exponential sums associated with $S$, so that we may apply Lemma \ref{LemmaFIOAJFOIWJ} and thus prove the conclusion of the theorem.

    Consider the random exponential sums
    \[ F(\xi) = \sum_{i \in \{ 0, \dots, n-1 \}} \sum_{k = 1}^M A_i e^{2 \pi i \xi \cdot X_i(k)} + \sum_{k \not \in I} A_n e^{2 \pi i \xi \cdot X_n(k)}. \]
    Controlling $|F(\xi)|$ with high probability will justify an application of Lemma \ref{LemmaFIOAJFOIWJ}. To analyze $F$, introduce the auxiliary exponential sums
    \[ G(\xi) = \sum_{i = 0}^n \sum_{k = 1}^M A_i e^{2 \pi i \xi \cdot X_i(k)} \]
    and
    \[ H(\xi) = \sum_{k \in I} A_n e^{2 \pi i \xi \cdot X_n(k)}. \]
    Then $F(\xi) = G(\xi) - H(\xi)$.
    Obtaining a bound on $G(\xi)$ is simple since it is a sum of $(n+1) \cdot M$ independent random variables. For non-zero $\xi \in \ZZ^d$,
    \begin{equation}
    \begin{split}
        \EE(G(\xi)) &= \sum_{i = 0}^n M A_i \int (\psi_i(x) / A_i) e^{2 \pi i \xi \cdot x}\; dx\\
        &= M \sum_{i = 0}^n \int \psi_i(x) e^{2 \pi i \xi \cdot x}\; dx\\
        &= M \int_{\TT^d} e^{2 \pi i \xi \cdot x}\; dx = 0.
    \end{split}
    \end{equation}
    Applying Lemma \ref{LemmaGISCICS1}, we conclude that for any fixed $\kappa > 0$, there is $C > 0$ such that
    \begin{equation} \label{equationCOIACOIAJCPPPPP}
        \PP \left( \sup_{|\xi| \leq N^{1 + \kappa}} |G(\xi)| \geq C M^{1/2} \log(M)^{1/2} \right) \leq 1/10.
    \end{equation}
    Analyzing $H(\xi)$ requires a more subtle concentration bound, which we delegate to a series of lemmas following this proof:
    \begin{itemize}
        \item In Lemma \ref{lemma24901401921209}, we will employ some concentration bounds to show that
        \begin{equation} \label{equationCICCIJCISJCOXOXP12}
            \PP \left( \sup_{|\xi| \leq N^{1 + \kappa}} | H(\xi) - \EE(H(\xi)) | \geq C M^{1/2} \log(M)^{1/2} \right) \leq 1/10.
        \end{equation}
    
        \item In Lemma \ref{lemmaOIJIOCJSOIJSIOJ123} we will show that for any $\delta > 0$, there exists $r_1 > 0$ such that for $r \leq r_1$ and any nonzero $\xi \in \ZZ^d$,
        \begin{equation} \label{equationCIJCIJIJXSO}
            |\EE(H(\xi))| \leq \delta M |\xi|^{-\beta/2} + O(M^{1/2}).
        \end{equation}
    \end{itemize}
    Analogous to equation \eqref{equationGGASDCJWIJSFGGGG} in Lemma \ref{LemmaVIVIJCIJSIJ}, for any indices $k_1,\dots,k_n \in \{ 1, \dots, M \}$, we have
    \begin{equation} \label{equationCCSCSOIJIOJ12312421412s}
        \PP \Big( |X_n(k_n) - f(X_1(k_1),\dots,X_{n-1}(k_{n-1}))| \leq 2 n^{1/2} \cdot (L+1) \cdot r \Big) \lesssim_{n,L} r^d \lesssim M^{-d/\lambda}.
    \end{equation}
    Thus if $M_0$ denotes the number of tuples of indices $(k_1,\dots,k_n)$ such that \eqref{equationDDIVJIVCJIJSDDKODKI23125} holds, then \eqref{equationCCSCSOIJIOJ12312421412s} implies that
    \begin{equation} \label{equatiocaiodjioj1231312412}
        \EE(M_0) \lesssim M^{n-d/\lambda}
    \end{equation}
    Applying Markov's inequality to \eqref{equatiocaiodjioj1231312412}, we conclude that there exists a constant $C > 0$ such that
    \begin{equation} \label{equationCOIJCOISAJCOIJ1243124124}
        \PP(M_0 \geq C M^{d/\lambda - n}) \leq 1/10.
    \end{equation}
    Taking a union bound to \eqref{equationCOIACOIAJCPPPPP}, \eqref{equationCICCIJCISJCOXOXP12}, and \eqref{equationCOIJCOISAJCOIJ1243124124}, and then applying \eqref{equationCIJCIJIJXSO}, we conclude that there exists $C > 0$ and a particular instantiation of the random variables $\{ X_i(k) \}$ such that for any $0 < |\xi| \leq M^{1 + \kappa}$,
    \begin{equation} \label{equatioACOINAWCOWANCOIWAN}
        |G(\xi)| \leq CM^{1/2} \log(M)^{1/2},
    \end{equation}
    and
    \begin{equation} \label{equatioNCOIJCOIJOIJO12312421412}
        |H(\xi)| \leq C M^{1/2} \log(M)^{1/2} + \delta M |\xi|^{-\beta/2}.
    \end{equation}
    And
    \begin{equation} \label{equationCOIACOIAWJOIJo21412412}
        \#(I) \leq C M^{d/\lambda - n}.
    \end{equation}
    Since $\lambda < \beta_0 < d/(n-1)$, the inequality $d/\lambda - n < 1$ holds. Thus \eqref{equationCOIACOIAWJOIJo21412412} implies that for sufficiently large $M > 0$, if $N = \#(S)$, then
    \begin{equation} \label{equationCOIJCOIJVIOJOIJIOJ4234141}
        N \geq M - C M^{d/\lambda - n} \geq (1/2) M \geq (1/2) r^{-\lambda}.
    \end{equation}
    Putting \eqref{equationCIJCIJIJXSO}, \eqref{equatioACOINAWCOWANCOIWAN}, \eqref{equatioNCOIJCOIJOIJO12312421412}, and the fact that $F(\xi) = G(\xi) + H(\xi)$ together, if we set $\tilde{a}(X_i(k)) = A_i$ for each $i$ and $k$, then
    \begin{equation} \label{equationDOIJAOIDJWOIJ}
        \left| \frac{1}{N} \sum_{x \in S} \tilde{a}(x) e^{2 \pi i \xi \cdot x} \right| \lesssim C N^{1/2} \log(N)^{1/2} + \delta |\xi|^{-\beta/2}.
    \end{equation}
    Since $\sum_{x \in S} \tilde{a}(x) \geq N$, if we set
    \[ a(x) = N \cdot \frac{\tilde{a}(x)}{\sum_{x \in S} \tilde{a}(x)}, \]
    then \eqref{equationDOIJAOIDJWOIJ}, \eqref{equationCOIJCOIJVIOJOIJIOJ4234141} and the fact that $N(S,r) \in \mathcal{H}(R_1,\dots,R_n)$ imply that the sum
    \[ \frac{1}{N} \sum_{x \in S} a(x) e^{2 \pi i \xi \cdot x} \]
    satisfies the assumptions of Lemma \ref{LemmaFIOAJFOIWJ} for arbitrarily large $N$. We therefore conclude by that Lemma that $H(R_1,\dots,R_n)$ is dense in $\mathcal{X}_\beta$.
\end{proof}

Our proof of Theorem \ref{theoremJOICVIOJVI122} will be complete once we prove \eqref{equationCICCIJCISJCOXOXP12} and \eqref{equationCIJCIJIJXSO}, i.e. once we prove Lemmas \ref{lemma24901401921209} and \ref{lemmaOIJIOCJSOIJSIOJ123}.

\begin{lemma} \label{lemma24901401921209}
    Let $H(\xi)$ be the random exponential sum described in Lemma \ref{lemmaOIOICJOIJOISJOIJS}. Then
    \[ \PP \left( \sup_{|\xi| \leq M^{1 + \kappa}} | H(\xi) - \EE(H(\xi)) | \geq C M^{1/2} \log(M)^{1/2} \right) \leq 1/10. \]
    for some universal constant $C > 0$.
\end{lemma}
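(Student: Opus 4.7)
The plan is to decompose the deviation $H(\xi) - \EE(H(\xi))$ into two pieces, each handled by a different concentration tool. The obstacle to a direct application of Hoeffding is that the random index set $I$ depends on \emph{all} of $X_1,\dots,X_n$, so $H(\xi)$ is not literally a sum of independent random variables. Writing $\EE_*(\cdot)$ for the conditional expectation given $X_1,\dots,X_{n-1}$, I would split
\[
H(\xi) - \EE(H(\xi)) = \bigl[H(\xi) - \EE_*(H(\xi))\bigr] + \bigl[\EE_*(H(\xi)) - \EE(H(\xi))\bigr],
\]
control the first bracket by conditional Hoeffding, and the second by McDiarmid applied to a function of $X_1,\dots,X_{n-1}$.

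For the first bracket, the point is that once $X_1,\dots,X_{n-1}$ are frozen, the set
\[
Y = \{ f(X_1(k_1),\dots,X_{n-1}(k_{n-1})) : k_1,\dots,k_{n-1} \in \{1,\dots,M\} \}
\]
is deterministic, and $k_n \in I$ iff $X_n(k_n) \in Y_{2n^{1/2}(L+1)r}$. Hence
\[
H(\xi) = A_n \sum_{k=1}^M \mathbf{1}[X_n(k) \in Y_{2n^{1/2}(L+1)r}] \, e^{2\pi i \xi \cdot X_n(k)}
\]
becomes, conditionally, a sum of $M$ independent terms each bounded in modulus by $A_n$. Conditional Hoeffding together with a union bound over the $\lesssim M^{d(1+\kappa)}$ frequencies $|\xi| \leq M^{1+\kappa}$ shows that for $C$ large enough the conditional probability that $\sup_\xi |H(\xi) - \EE_*(H(\xi))|$ exceeds $(C/2) M^{1/2}\log(M)^{1/2}$ is at most $1/20$; integrating recovers the same unconditional bound.

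For the second bracket I would write
\[
\EE_*(H(\xi)) = M \int_{Y_{2n^{1/2}(L+1)r}} \psi_n(x) e^{2\pi i \xi \cdot x}\, dx,
\]
which is a function of the $(n-1)M$ independent variables $\{X_i(k)\}_{1 \leq i \leq n-1}$. The crucial bounded-differences estimate is combinatorial: perturbing a single $X_i(k_i)$ alters at most $\lesssim M^{n-2}$ of the points defining $Y$, so the thickening $Y_{2n^{1/2}(L+1)r}$ changes by Lebesgue measure $\lesssim M^{n-2} r^d$, and hence each coordinate change moves $\EE_*(H(\xi))$ by at most $\lesssim M^{n-1} r^d \asymp M^{n-1-d/\lambda}$ (using $r \asymp M^{-1/\lambda}$). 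Summing squares over all $(n-1)M$ coordinates gives $\sum A_{i,k}^2 \lesssim M^{2n-1-2d/\lambda}$, and the hypothesis $\lambda < \beta$ (combined with $\beta \leq d/(n-3/4)$ when $n \geq 3$, or $\beta \leq d$ when $n=2$) ensures $d/\lambda > n-1$, so $2 - 2n + 2d/\lambda > 0$. McDiarmid, applied to real and imaginary parts separately, then yields
\[
\PP\bigl( |\EE_*(H(\xi)) - \EE(H(\xi))| \geq (C/2) M^{1/2}\log(M)^{1/2} \bigr) \lesssim \exp\bigl( -c M^{2-2n+2d/\lambda} \log M \bigr),
\]
whose super-polynomial decay beats the $M^{d(1+\kappa)}$ union bound over frequencies once $M$ is large. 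Combining the two estimates proves the lemma.

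The main technical step I expect to require care is the bounded-differences bookkeeping: one must pin down precisely how many tuples from $Y$ are affected by perturbing a single coordinate, and check that the variation of $\EE_*(H(\xi))$ against the changing thickening is indeed $O(M^{n-1-d/\lambda})$ uniformly in $\xi$. Everything else reduces to standard concentration inputs together with the condition $\lambda < \beta \leq d/(n-1)$, which is exactly the regime in which the McDiarmid exponent grows faster than any fixed power of $\log M$.
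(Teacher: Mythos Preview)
Your proposal is correct and follows essentially the same two-step decomposition as the paper: condition on the variables $\{X_i(k)\}_{1\le i\le n-1}$, apply conditional Hoeffding to control $H(\xi)-\EE_*(H(\xi))$, then apply McDiarmid to control $\EE_*(H(\xi))-\EE(H(\xi))$ via the observation that perturbing a single $X_i(k)$ affects at most $M^{n-2}$ of the balls forming the thickened set, giving bounded differences of order $M^{n-1}r^d\lesssim 1$. The only cosmetic difference is that the paper stops at the cruder bound $r^dM^{n-1}\lesssim 1$ for each coordinate (yielding $\sum A_{i,k}^2\lesssim M$ and a polynomial tail that still beats the union bound for large $C$), whereas you track the exponent $M^{n-1-d/\lambda}$ more precisely and obtain a super-polynomial tail; both suffice.
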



Before we prove this Lemma, let us describe the idea behind it's proof. The result is a \emph{concentration bound} for the random quantity $H(\xi)$, a standard topic in the theory of high dimensional probability. The basic heuristic of this topic is that for an arbitrary function $F(X_1,\dots,X_N)$ of many independent random inputs, we have $F(X_1,\dots,X_N) \approx \EE(F(X_1,\dots,X_N))$ with high probability, provided that each of the random inputs $\{ X_i \}$ has a small influence on the overall output of $f$. McDiarmid's and Hoeffding's inequalities, described in the notation section of this paper, are two classic results in this theory. A major difference between the two inequalities is that McDiarmid's inequality can be applied to \emph{nonlinear functions} $F$, whereas Hoeffding's inequality can only be applied when $F$ is linear.

Since $H(\xi)$ is a nonlinear function of the independent random quantities $\{ X_i(k) \}$, McDiarmid's inequality presents itself as a useful concentration bound. However, a naive application of McDiarmid's inequality fails here, because changing a single random variable $X_i(k)$ for $1 \leq i \leq n-1$ while fixing all other random variables can change the indices contained in the set $I$ by as much as $O(M)$, and thus change $H(\xi)$ by as much as $O(M)$ as a result (see Figure 1 for an example of this phenomenon). McDiarmid's inequality then gives that $|H(\xi) - \EE(H(\xi))| \lesssim M$ with high probability, which is not tight enough to obtain square root cancellation like what we obtained in \eqref{equationCOIACOIAJCPPPPP}. On the other hand, it seems that a single variable $X_i(k)$ only changes $H(\xi)$ by $O(M)$ when the other random variables $\{ X_1(k) \}$ are configured in a very particular way, which is unlikely to happen. Thus we should expect that adjusting a single random variable $X_i(k)$ does not influence the value of $H(\xi)$ much when \emph{averaged} over the possible choices of $\{ X_n(k) \}$. This leads us to first average over this first set of random variables, and then apply McDiarmid's inequality, which yields the correct concentration result.

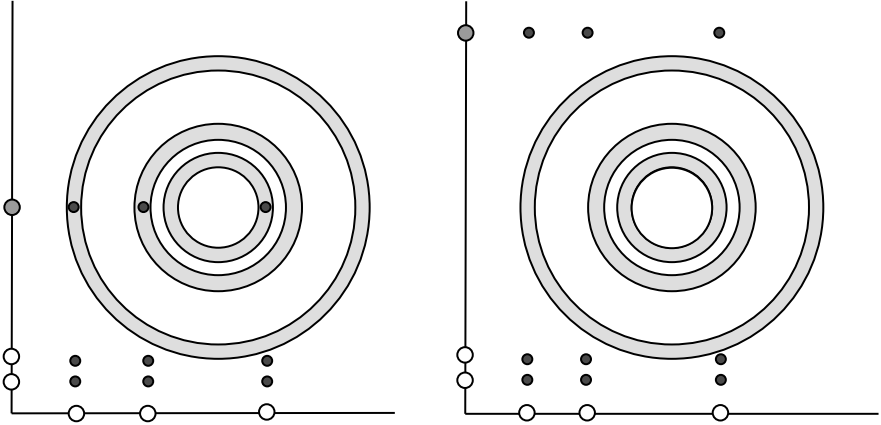
\begin{figure}

\begin{center}

\tikzset{every picture/.style={line width=0.75pt}} 

\begin{tikzpicture}[x=0.75pt,y=0.75pt,yscale=0.8,xscale=0.8]

\draw    (314,19.7) -- (314.6,279.97) ;
\draw    (314,19.7) -- (574.55,19.7) ;
\draw    (28,20) -- (269.6,20.27) ;
\draw    (28,20) -- (28.6,280.27) ;
\draw  [fill={rgb, 255:red, 222; green, 222; blue, 222 }  ,fill opacity=1 ] (348.81,149.87) .. controls (348.81,97.14) and (391.55,54.4) .. (444.27,54.4) .. controls (497,54.4) and (539.74,97.14) .. (539.74,149.87) .. controls (539.74,202.59) and (497,245.34) .. (444.27,245.34) .. controls (391.55,245.34) and (348.81,202.59) .. (348.81,149.87) -- cycle ;
\draw  [fill={rgb, 255:red, 255; green, 255; blue, 255 }  ,fill opacity=1 ] (357.85,149.87) .. controls (357.85,102.14) and (396.55,63.45) .. (444.27,63.45) .. controls (492,63.45) and (530.7,102.14) .. (530.7,149.87) .. controls (530.7,197.6) and (492,236.29) .. (444.27,236.29) .. controls (396.55,236.29) and (357.85,197.6) .. (357.85,149.87) -- cycle ;
\draw  [fill={rgb, 255:red, 222; green, 222; blue, 222 }  ,fill opacity=1 ] (391.46,149.87) .. controls (391.46,120.7) and (415.1,97.05) .. (444.27,97.05) .. controls (473.45,97.05) and (497.09,120.7) .. (497.09,149.87) .. controls (497.09,179.04) and (473.45,202.68) .. (444.27,202.68) .. controls (415.1,202.68) and (391.46,179.04) .. (391.46,149.87) -- cycle ;
\draw  [fill={rgb, 255:red, 255; green, 255; blue, 255 }  ,fill opacity=1 ] (401.58,149.87) .. controls (401.58,126.29) and (420.7,107.17) .. (444.27,107.17) .. controls (467.85,107.17) and (486.97,126.29) .. (486.97,149.87) .. controls (486.97,173.45) and (467.85,192.56) .. (444.27,192.56) .. controls (420.7,192.56) and (401.58,173.45) .. (401.58,149.87) -- cycle ;
\draw  [fill={rgb, 255:red, 222; green, 222; blue, 222 }  ,fill opacity=1 ] (409.8,149.87) .. controls (409.8,130.83) and (425.24,115.39) .. (444.27,115.39) .. controls (463.31,115.39) and (478.75,130.83) .. (478.75,149.87) .. controls (478.75,168.91) and (463.31,184.34) .. (444.27,184.34) .. controls (425.24,184.34) and (409.8,168.91) .. (409.8,149.87) -- cycle ;
\draw  [fill={rgb, 255:red, 255; green, 255; blue, 255 }  ,fill opacity=1 ] (418.9,149.87) .. controls (418.9,135.85) and (430.26,124.49) .. (444.27,124.49) .. controls (458.29,124.49) and (469.65,135.85) .. (469.65,149.87) .. controls (469.65,163.88) and (458.29,175.24) .. (444.27,175.24) .. controls (430.26,175.24) and (418.9,163.88) .. (418.9,149.87) -- cycle ;
\draw  [fill={rgb, 255:red, 222; green, 222; blue, 222 }  ,fill opacity=1 ] (62.81,149.87) .. controls (62.81,97.14) and (105.55,54.4) .. (158.27,54.4) .. controls (211,54.4) and (253.74,97.14) .. (253.74,149.87) .. controls (253.74,202.59) and (211,245.34) .. (158.27,245.34) .. controls (105.55,245.34) and (62.81,202.59) .. (62.81,149.87) -- cycle ;
\draw  [fill={rgb, 255:red, 255; green, 255; blue, 255 }  ,fill opacity=1 ] (71.85,149.87) .. controls (71.85,102.14) and (110.55,63.45) .. (158.27,63.45) .. controls (206,63.45) and (244.7,102.14) .. (244.7,149.87) .. controls (244.7,197.6) and (206,236.29) .. (158.27,236.29) .. controls (110.55,236.29) and (71.85,197.6) .. (71.85,149.87) -- cycle ;
\draw  [fill={rgb, 255:red, 222; green, 222; blue, 222 }  ,fill opacity=1 ] (105.46,149.87) .. controls (105.46,120.7) and (129.1,97.05) .. (158.27,97.05) .. controls (187.45,97.05) and (211.09,120.7) .. (211.09,149.87) .. controls (211.09,179.04) and (187.45,202.68) .. (158.27,202.68) .. controls (129.1,202.68) and (105.46,179.04) .. (105.46,149.87) -- cycle ;
\draw  [fill={rgb, 255:red, 255; green, 255; blue, 255 }  ,fill opacity=1 ] (115.58,149.87) .. controls (115.58,126.29) and (134.7,107.17) .. (158.27,107.17) .. controls (181.85,107.17) and (200.97,126.29) .. (200.97,149.87) .. controls (200.97,173.45) and (181.85,192.56) .. (158.27,192.56) .. controls (134.7,192.56) and (115.58,173.45) .. (115.58,149.87) -- cycle ;
\draw  [fill={rgb, 255:red, 222; green, 222; blue, 222 }  ,fill opacity=1 ] (123.8,149.87) .. controls (123.8,130.83) and (139.24,115.39) .. (158.27,115.39) .. controls (177.31,115.39) and (192.75,130.83) .. (192.75,149.87) .. controls (192.75,168.91) and (177.31,184.34) .. (158.27,184.34) .. controls (139.24,184.34) and (123.8,168.91) .. (123.8,149.87) -- cycle ;
\draw  [fill={rgb, 255:red, 255; green, 255; blue, 255 }  ,fill opacity=1 ] (132.9,149.87) .. controls (132.9,135.85) and (144.26,124.49) .. (158.27,124.49) .. controls (172.29,124.49) and (183.65,135.85) .. (183.65,149.87) .. controls (183.65,163.88) and (172.29,175.24) .. (158.27,175.24) .. controls (144.26,175.24) and (132.9,163.88) .. (132.9,149.87) -- cycle ;
\draw  [fill={rgb, 255:red, 255; green, 255; blue, 255 }  ,fill opacity=1 ] (64,19.88) .. controls (64,17.18) and (66.18,15) .. (68.88,15) .. controls (71.57,15) and (73.75,17.18) .. (73.75,19.88) .. controls (73.75,22.57) and (71.57,24.75) .. (68.88,24.75) .. controls (66.18,24.75) and (64,22.57) .. (64,19.88) -- cycle ;
\draw  [fill={rgb, 255:red, 255; green, 255; blue, 255 }  ,fill opacity=1 ] (418.9,149.57) .. controls (418.9,135.55) and (430.26,124.19) .. (444.27,124.19) .. controls (458.29,124.19) and (469.65,135.55) .. (469.65,149.57) .. controls (469.65,163.58) and (458.29,174.94) .. (444.27,174.94) .. controls (430.26,174.94) and (418.9,163.58) .. (418.9,149.57) -- cycle ;
\draw  [fill={rgb, 255:red, 74; green, 74; blue, 74 }  ,fill opacity=1 ] (64,150.14) .. controls (64,148.41) and (65.41,147) .. (67.14,147) .. controls (68.88,147) and (70.28,148.41) .. (70.28,150.14) .. controls (70.28,151.88) and (68.88,153.28) .. (67.14,153.28) .. controls (65.41,153.28) and (64,151.88) .. (64,150.14) -- cycle ;
\draw  [fill={rgb, 255:red, 74; green, 74; blue, 74 }  ,fill opacity=1 ] (108,150.14) .. controls (108,148.41) and (109.41,147) .. (111.14,147) .. controls (112.88,147) and (114.28,148.41) .. (114.28,150.14) .. controls (114.28,151.88) and (112.88,153.28) .. (111.14,153.28) .. controls (109.41,153.28) and (108,151.88) .. (108,150.14) -- cycle ;
\draw  [fill={rgb, 255:red, 74; green, 74; blue, 74 }  ,fill opacity=1 ] (185,150.14) .. controls (185,148.41) and (186.41,147) .. (188.14,147) .. controls (189.88,147) and (191.28,148.41) .. (191.28,150.14) .. controls (191.28,151.88) and (189.88,153.28) .. (188.14,153.28) .. controls (186.41,153.28) and (185,151.88) .. (185,150.14) -- cycle ;
\draw  [fill={rgb, 255:red, 74; green, 74; blue, 74 }  ,fill opacity=1 ] (65,53.14) .. controls (65,51.41) and (66.41,50) .. (68.14,50) .. controls (69.88,50) and (71.28,51.41) .. (71.28,53.14) .. controls (71.28,54.88) and (69.88,56.28) .. (68.14,56.28) .. controls (66.41,56.28) and (65,54.88) .. (65,53.14) -- cycle ;
\draw  [fill={rgb, 255:red, 74; green, 74; blue, 74 }  ,fill opacity=1 ] (65,40.14) .. controls (65,38.41) and (66.41,37) .. (68.14,37) .. controls (69.88,37) and (71.28,38.41) .. (71.28,40.14) .. controls (71.28,41.88) and (69.88,43.28) .. (68.14,43.28) .. controls (66.41,43.28) and (65,41.88) .. (65,40.14) -- cycle ;
\draw  [fill={rgb, 255:red, 74; green, 74; blue, 74 }  ,fill opacity=1 ] (111,40.14) .. controls (111,38.41) and (112.41,37) .. (114.14,37) .. controls (115.88,37) and (117.28,38.41) .. (117.28,40.14) .. controls (117.28,41.88) and (115.88,43.28) .. (114.14,43.28) .. controls (112.41,43.28) and (111,41.88) .. (111,40.14) -- cycle ;
\draw  [fill={rgb, 255:red, 74; green, 74; blue, 74 }  ,fill opacity=1 ] (111,53.14) .. controls (111,51.41) and (112.41,50) .. (114.14,50) .. controls (115.88,50) and (117.28,51.41) .. (117.28,53.14) .. controls (117.28,54.88) and (115.88,56.28) .. (114.14,56.28) .. controls (112.41,56.28) and (111,54.88) .. (111,53.14) -- cycle ;
\draw  [fill={rgb, 255:red, 74; green, 74; blue, 74 }  ,fill opacity=1 ] (186,53.14) .. controls (186,51.41) and (187.41,50) .. (189.14,50) .. controls (190.88,50) and (192.28,51.41) .. (192.28,53.14) .. controls (192.28,54.88) and (190.88,56.28) .. (189.14,56.28) .. controls (187.41,56.28) and (186,54.88) .. (186,53.14) -- cycle ;
\draw  [fill={rgb, 255:red, 74; green, 74; blue, 74 }  ,fill opacity=1 ] (186,40.14) .. controls (186,38.41) and (187.41,37) .. (189.14,37) .. controls (190.88,37) and (192.28,38.41) .. (192.28,40.14) .. controls (192.28,41.88) and (190.88,43.28) .. (189.14,43.28) .. controls (187.41,43.28) and (186,41.88) .. (186,40.14) -- cycle ;
\draw  [fill={rgb, 255:red, 74; green, 74; blue, 74 }  ,fill opacity=1 ] (350,41.14) .. controls (350,39.41) and (351.41,38) .. (353.14,38) .. controls (354.88,38) and (356.28,39.41) .. (356.28,41.14) .. controls (356.28,42.88) and (354.88,44.28) .. (353.14,44.28) .. controls (351.41,44.28) and (350,42.88) .. (350,41.14) -- cycle ;
\draw  [fill={rgb, 255:red, 74; green, 74; blue, 74 }  ,fill opacity=1 ] (387,41.14) .. controls (387,39.41) and (388.41,38) .. (390.14,38) .. controls (391.88,38) and (393.28,39.41) .. (393.28,41.14) .. controls (393.28,42.88) and (391.88,44.28) .. (390.14,44.28) .. controls (388.41,44.28) and (387,42.88) .. (387,41.14) -- cycle ;
\draw  [fill={rgb, 255:red, 74; green, 74; blue, 74 }  ,fill opacity=1 ] (472,41.14) .. controls (472,39.41) and (473.41,38) .. (475.14,38) .. controls (476.88,38) and (478.28,39.41) .. (478.28,41.14) .. controls (478.28,42.88) and (476.88,44.28) .. (475.14,44.28) .. controls (473.41,44.28) and (472,42.88) .. (472,41.14) -- cycle ;
\draw  [fill={rgb, 255:red, 74; green, 74; blue, 74 }  ,fill opacity=1 ] (350,54.14) .. controls (350,52.41) and (351.41,51) .. (353.14,51) .. controls (354.88,51) and (356.28,52.41) .. (356.28,54.14) .. controls (356.28,55.88) and (354.88,57.28) .. (353.14,57.28) .. controls (351.41,57.28) and (350,55.88) .. (350,54.14) -- cycle ;
\draw  [fill={rgb, 255:red, 74; green, 74; blue, 74 }  ,fill opacity=1 ] (387,54.14) .. controls (387,52.41) and (388.41,51) .. (390.14,51) .. controls (391.88,51) and (393.28,52.41) .. (393.28,54.14) .. controls (393.28,55.88) and (391.88,57.28) .. (390.14,57.28) .. controls (388.41,57.28) and (387,55.88) .. (387,54.14) -- cycle ;
\draw  [fill={rgb, 255:red, 74; green, 74; blue, 74 }  ,fill opacity=1 ] (472,54.14) .. controls (472,52.41) and (473.41,51) .. (475.14,51) .. controls (476.88,51) and (478.28,52.41) .. (478.28,54.14) .. controls (478.28,55.88) and (476.88,57.28) .. (475.14,57.28) .. controls (473.41,57.28) and (472,55.88) .. (472,54.14) -- cycle ;
\draw  [fill={rgb, 255:red, 74; green, 74; blue, 74 }  ,fill opacity=1 ] (351,260.14) .. controls (351,258.41) and (352.41,257) .. (354.14,257) .. controls (355.88,257) and (357.28,258.41) .. (357.28,260.14) .. controls (357.28,261.88) and (355.88,263.28) .. (354.14,263.28) .. controls (352.41,263.28) and (351,261.88) .. (351,260.14) -- cycle ;
\draw  [fill={rgb, 255:red, 74; green, 74; blue, 74 }  ,fill opacity=1 ] (388,260.14) .. controls (388,258.41) and (389.41,257) .. (391.14,257) .. controls (392.88,257) and (394.28,258.41) .. (394.28,260.14) .. controls (394.28,261.88) and (392.88,263.28) .. (391.14,263.28) .. controls (389.41,263.28) and (388,261.88) .. (388,260.14) -- cycle ;
\draw  [fill={rgb, 255:red, 74; green, 74; blue, 74 }  ,fill opacity=1 ] (471,260.14) .. controls (471,258.41) and (472.41,257) .. (474.14,257) .. controls (475.88,257) and (477.28,258.41) .. (477.28,260.14) .. controls (477.28,261.88) and (475.88,263.28) .. (474.14,263.28) .. controls (472.41,263.28) and (471,261.88) .. (471,260.14) -- cycle ;
\draw  [fill={rgb, 255:red, 255; green, 255; blue, 255 }  ,fill opacity=1 ] (109,19.88) .. controls (109,17.18) and (111.18,15) .. (113.88,15) .. controls (116.57,15) and (118.75,17.18) .. (118.75,19.88) .. controls (118.75,22.57) and (116.57,24.75) .. (113.88,24.75) .. controls (111.18,24.75) and (109,22.57) .. (109,19.88) -- cycle ;
\draw  [fill={rgb, 255:red, 255; green, 255; blue, 255 }  ,fill opacity=1 ] (184,20.88) .. controls (184,18.18) and (186.18,16) .. (188.88,16) .. controls (191.57,16) and (193.75,18.18) .. (193.75,20.88) .. controls (193.75,23.57) and (191.57,25.75) .. (188.88,25.75) .. controls (186.18,25.75) and (184,23.57) .. (184,20.88) -- cycle ;
\draw  [fill={rgb, 255:red, 255; green, 255; blue, 255 }  ,fill opacity=1 ] (23,39.88) .. controls (23,37.18) and (25.18,35) .. (27.88,35) .. controls (30.57,35) and (32.75,37.18) .. (32.75,39.88) .. controls (32.75,42.57) and (30.57,44.75) .. (27.88,44.75) .. controls (25.18,44.75) and (23,42.57) .. (23,39.88) -- cycle ;
\draw  [fill={rgb, 255:red, 255; green, 255; blue, 255 }  ,fill opacity=1 ] (23,55.88) .. controls (23,53.18) and (25.18,51) .. (27.88,51) .. controls (30.57,51) and (32.75,53.18) .. (32.75,55.88) .. controls (32.75,58.57) and (30.57,60.75) .. (27.88,60.75) .. controls (25.18,60.75) and (23,58.57) .. (23,55.88) -- cycle ;
\draw  [fill={rgb, 255:red, 155; green, 155; blue, 155 }  ,fill opacity=1 ] (23.43,150.01) .. controls (23.43,147.32) and (25.61,145.13) .. (28.3,145.13) .. controls (30.99,145.13) and (33.18,147.32) .. (33.18,150.01) .. controls (33.18,152.7) and (30.99,154.88) .. (28.3,154.88) .. controls (25.61,154.88) and (23.43,152.7) .. (23.43,150.01) -- cycle ;
\draw  [fill={rgb, 255:red, 255; green, 255; blue, 255 }  ,fill opacity=1 ] (348,20.36) .. controls (348,17.67) and (350.18,15.48) .. (352.88,15.48) .. controls (355.57,15.48) and (357.75,17.67) .. (357.75,20.36) .. controls (357.75,23.05) and (355.57,25.23) .. (352.88,25.23) .. controls (350.18,25.23) and (348,23.05) .. (348,20.36) -- cycle ;
\draw  [fill={rgb, 255:red, 255; green, 255; blue, 255 }  ,fill opacity=1 ] (386,20.36) .. controls (386,17.67) and (388.18,15.48) .. (390.88,15.48) .. controls (393.57,15.48) and (395.75,17.67) .. (395.75,20.36) .. controls (395.75,23.05) and (393.57,25.23) .. (390.88,25.23) .. controls (388.18,25.23) and (386,23.05) .. (386,20.36) -- cycle ;
\draw  [fill={rgb, 255:red, 255; green, 255; blue, 255 }  ,fill opacity=1 ] (470,20.36) .. controls (470,17.67) and (472.18,15.48) .. (474.88,15.48) .. controls (477.57,15.48) and (479.75,17.67) .. (479.75,20.36) .. controls (479.75,23.05) and (477.57,25.23) .. (474.88,25.23) .. controls (472.18,25.23) and (470,23.05) .. (470,20.36) -- cycle ;
\draw  [fill={rgb, 255:red, 255; green, 255; blue, 255 }  ,fill opacity=1 ] (309,40.88) .. controls (309,38.18) and (311.18,36) .. (313.88,36) .. controls (316.57,36) and (318.75,38.18) .. (318.75,40.88) .. controls (318.75,43.57) and (316.57,45.75) .. (313.88,45.75) .. controls (311.18,45.75) and (309,43.57) .. (309,40.88) -- cycle ;
\draw  [fill={rgb, 255:red, 255; green, 255; blue, 255 }  ,fill opacity=1 ] (309,56.88) .. controls (309,54.18) and (311.18,52) .. (313.88,52) .. controls (316.57,52) and (318.75,54.18) .. (318.75,56.88) .. controls (318.75,59.57) and (316.57,61.75) .. (313.88,61.75) .. controls (311.18,61.75) and (309,59.57) .. (309,56.88) -- cycle ;
\draw  [fill={rgb, 255:red, 155; green, 155; blue, 155 }  ,fill opacity=1 ] (309.43,260.01) .. controls (309.43,257.32) and (311.61,255.13) .. (314.3,255.13) .. controls (316.99,255.13) and (319.18,257.32) .. (319.18,260.01) .. controls (319.18,262.7) and (316.99,264.88) .. (314.3,264.88) .. controls (311.61,264.88) and (309.43,262.7) .. (309.43,260.01) -- cycle ;

\end{tikzpicture}

\caption[LoF entry]{The two diagrams displayed indicate two instances of the set $S$ for the function $f(x_1,x_2) = (x_1 - 1/2)^2 + (x_2 - 1/2)^2$. Here $M = 3$, $n = 3$, the values on the $x$-axis represent the values $X_1(1), X_1(2),$ and $X_1(3)$, the values on the $y$-axis represent the values $X_2(1), X_2(2)$, and $X_2(3)$, the dark points represent the family of all pairs $(X_1(k_1),X_2(k_2))$, and the annuli represent the $O(r)$-neighborhoods of $f^{-1}(X_3(1)), f^{-1}(X_3(2))$, and $f^{-1}(X_3(3))$. In this setup, $S$ consists of all of the values $\{ X_2(k) \}$, as well as all values of $X_1(k)$ such that none of the dark points on the vertical line above $X_1(k)$ intersect any of the annuli. The two diagrams only differ as a result of adjusting a single variable $X_2(k_0)$, indicated by the shaded value on the $y$-axis. For the values represented in the left diagram, $I = \emptyset$, whereas for the values represented in the right diagram, $I$ contains every index, and this completely alters the exponential sums associated with $S$.} 
\label{thefigure}
\end{center}
\end{figure}

\begin{proof}[Proof of Lemma \ref{lemma24901401921209}]
    Consider the random set $\Omega$ of values $x_n \in Q_n$ such that there are $k_1,\dots,k_{n-1} \in \{ 1,\dots,M \}$ with
    \begin{equation}
        |x_n - f(X_1(k_1),\dots,X_{n-1}(k_{n-1}))| \leq cr.
    \end{equation}
    Then
    \begin{equation}
        H(\xi) = A_n \sum_{k = 1}^M Z(k,\xi).
    \end{equation}
    where
    \[ Z(k,\xi) = \begin{cases} e^{2 \pi i \xi \cdot X_n(k)} &: X_n(k) \in \Omega, \\ 0 &: X_n(k) \not \in \Omega \end{cases}. \]
    If $\Sigma$ is the $\sigma$-algebra generated by the random variables
    \[ \{ X_i(k) : i \in \{ 1, \dots, n-1 \}, k \in \{ 1, \dots, M \} \}, \]
    then $\Omega$ is measurable with respect to $\Sigma$. Thus the random variables $\{ Z(k,\xi) \}$ are \emph{conditionally independent} given $\Sigma$. Since we have $|Z(k,\xi)| \leq 1$ almost surely, Hoeffding's inequality thus implies that for all $t \geq 0$,
    \begin{equation} \label{equationCOIJCOIJX1232312}
        \PP \left( \left| H(\xi) - \EE(H(\xi)|\Sigma) \right| \geq t \right) \leq 4 \exp \left( \frac{-t^2}{2M} \right).
    \end{equation}
    It is simple to see that
    \begin{equation}
        \EE(H(\xi) | \Sigma) = A_n M \int_\Omega \psi_n(x) e^{2 \pi i \xi \cdot x}\; dx.
    \end{equation}
    Since
    \begin{equation}
        \Omega = \bigcup \left\{ N(f(X_1(k_1),\dots,X_{n-1}(k_{n-1})), cr) : 1 \leq k_1,\dots,k_{n-1} \leq N \right\}.
    \end{equation}
    we therefore see that varying each random variable $X_i(k)$, for $1 \leq i \leq n-1$ while fixing the other random variables adjusts at most $M^{n-2}$ of the balls forming $\Omega$, and thus varying $X_i(k)$ while fixing the other random variables changes $\EE(H(\xi)|\Sigma)$ by at most
    \begin{equation}
        M \cdot 2 \cdot (2cr)^d \cdot M^{n-2} \lesssim_{n,d,L} r^d M^{n-1} \lesssim 1.
    \end{equation}
    %
    Thus McDiarmid's inequality shows that there exists a constant $C$ depending on $d$, $n$, and $L$, such that for any $t \geq 0$,
    \begin{equation} \label{equationCIJCIJIVJIO}
        \PP \left( |\EE(H(\xi)|\Sigma) - \EE(H(\xi))| \geq t \right) \leq 4 \exp \left( \frac{-t^2}{C M} \right).
    \end{equation}
    Combining \eqref{equationCOIJCOIJX1232312} and \eqref{equationCIJCIJIVJIO}, we conclude that there exists a constant $C > 0$ such that for each $\xi \in \ZZ^d$,
    \begin{equation} \label{equationCNCIJIJOJOPPPPOPODAW}
        \PP \left( | H(\xi) - \EE(H(\xi)) | \geq t  \right) \leq 8 \exp \left( \frac{-M t^2}{C} \right).
    \end{equation}
    Applying a union bound to \eqref{equationCNCIJIJOJOPPPPOPODAW} over all $0 < |\xi| \leq M^{1 + \kappa}$ shows that there exists a constant $C > 0$ such that
    \[ \PP \left( \sup_{|\xi| \leq M^{1 + \kappa}} | H(\xi) - \EE(H(\xi)) | \geq C M^{1/2} \log(M)^{1/2} \right) \leq 1/10. \qedhere \]
\end{proof}

The analysis of \eqref{equationCIJCIJIJXSO} requires a different class of probabilistic techniques. For any set $E \subset \TT^{d(n-1)}$, let $A(E)$ denote the event that there exists indices $k_1,\dots, k_{n-1}$ such that
\[ (X_1(k_1), \dots, X_{n-1}(k_{n-1})) \in E. \]
Understanding the quantity $\EE(H(\xi))$ will follow from an analysis of the values $\PP(A(E))$. To see why this is true, note that because the random variables $\{ X_n(k) \}$ are all identically distributed, for a fixed $\xi$, all the quantities $\{ Z(k,\xi) \}$ are identically distributed, and so
\begin{align*}
    \EE(H(\xi)) &= M A_n \cdot \EE(Z(1,\xi))\\
    &= M A_n \cdot \EE( \mathbf{I}(X_n(1) \in \Omega) e^{2 \pi i \xi \cdot X_n(1)} )\\
    &= M A_n \cdot \int \PP(X_n(1) \in \Omega | X_n(1) = x_n) e^{2 \pi i \xi \cdot x_n} d\PP(X_n(1) = x_n)\\
    &= M A_n \cdot \int \psi_n(x_n) \PP(1 \in I | X_n(1) = x_n) e^{2 \pi i \xi \cdot x_n}\; dx_n.
\end{align*}
If we write $E_{x_n} = f^{-1}(B_{cr}(x_n))$, then $\PP(1 \in I | X_n(1) = x_n) = \PP(A(E_{x_n}))$, so that
\begin{equation} \label{ejowiajeoijo1ij2312j}
    \EE(H(\xi)) = MA_n \cdot \int \psi_n(x_n) \PP(A(E_{x_n})) \cdot e^{2 \pi i \xi \cdot x_n}\; dx_n.
\end{equation}
If $n = 2$, then we can explicitly calculate $\PP(A(E)) \approx 1 - (1 - |E|)^M$ for any set $E$, which makes this analysis of $\EE[H(\xi)]$ more tractable. If $n > 2$, the random vectors
\[ \{ (X_1(k_1), \dots, X_{n-1}(k_{n-1})) : 1 \leq k_1, \dots k_{n-1} \leq M \} \]
are \emph{not} independent of one another, which makes an analysis of the quantities $\PP(A(E))$ difficult. An exception to this is when $E = E_1 \times \dots \times E_{n-1}$ is a Cartesian product, in which case
\[ \PP(A(E)) = \prod_{i = 1}^{n-1} \PP(\text{There is $k$ such that}\ X_i(k) \in E_i) \approx \prod_{i = 1}^{n-1} (1 - (1 - |E_i|)^M). \]
Our strategy to understanding the sets $E_{x_n}$ for $n > 2$ is therefore to apply the Whitney decomposition Lemma, writing $E_{x_n} = \bigcup_i Q_i$ for a family of almost disjoint, axis-oriented cubes $Q_i$. Then $A(E_{x_n}) = \bigcup_i A(Q_i)$, so we can approximate $\PP(A(E_{x_n}))$ by applying the inclusion-exclusion principle. This leads to a sufficient approximation to the values $\PP(A(E_{x_n}))$ provided that $M \ll r^{-d/(n-3/4)}$. This is the only part of the proof of Theorem \ref{theoremJOICVIOJVI122} where the dimension bound becomes tight; increasing the dimension bound in Theorem \ref{theoremJOICVIOJVI122} will be immediate if we can improve the following Lemma, i.e. finding a better analysis of $\EE[H(\xi)]$.

\begin{lemma} \label{lemmaOIJIOCJSOIJSIOJ123}
    Let $H(\xi)$ be the random exponential sum described in Lemma \ref{lemmaOIOICJOIJOISJOIJS}. Then there exists $C > 0$ such that for any $\delta > 0$, there exists $M_0 > 0$ such that for $M \geq M_0$,
    \[ |\EE(H(\xi))| \leq \delta M |\xi|^{-\beta/2} + C M^{1/2}. \]
\end{lemma}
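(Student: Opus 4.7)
The plan is to compute $\EE(H(\xi))$ by exploiting symmetry of the $X_n(k)$, applying Bonferroni's inequality, and extracting decay from an oscillatory integral via stationary phase. Set $\rho = 2n^{1/2}(L+1) r$ and, for each tuple $k = (k_1,\dots,k_{n-1}) \in \{1,\dots,M\}^{n-1}$, define $E_k = \{|X_n(1) - f(X_1(k_1),\dots,X_{n-1}(k_{n-1}))| \leq \rho\}$, so that $\{1 \in I\} = \bigcup_k E_k$. By symmetry,
\[
    \EE(H(\xi)) = M A_n \EE\bigl[\mathbf{1}_{\bigcup_k E_k} e^{2\pi i \xi \cdot X_n(1)}\bigr],
\]
and Bonferroni's inequality gives $\bigl|\mathbf{1}_{\bigcup_k E_k} - \sum_k \mathbf{1}_{E_k}\bigr| \leq \sum_{k \neq k'} \mathbf{1}_{E_k \cap E_{k'}}$. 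It therefore suffices to bound the main term $M A_n \sum_k \EE[\mathbf{1}_{E_k} e^{2\pi i \xi \cdot X_n(1)}]$ by $\delta M |\xi|^{-\beta/2}$ and the Bonferroni error $M A_n \sum_{k \neq k'} \PP(E_k \cap E_{k'})$ by $C M^{1/2}$.

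For the main term, tuples with a repeated index total $O(M^{n-2})$, each contributing at most $O(\rho^d)$, so their combined contribution $O(M^{n-1}\rho^d)$ is $o(M^{1/2})$ and is absorbed in $CM^{1/2}$. For distinct tuples, independence yields
\[
    \EE\bigl[\mathbf{1}_{E_k} e^{2\pi i \xi \cdot X_n(1)}\bigr] = \frac{1}{A_1 \cdots A_n} \int_{|x_n - f(y)| \leq \rho} \psi_n(x_n)\, e^{2\pi i \xi \cdot x_n} \prod_{i=1}^{n-1} \psi_i(y_i)\, dx_n\, dy.
\]
Substituting $x_n = f(y) + \rho u$ with $|u| \leq 1$ extracts a factor of $\rho^d$ and leaves an oscillatory integral in $y$ with phase $\xi \cdot f(y)$ and an amplitude smooth in $y$ uniformly in $u$. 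Because each $X_i(k_i) \in Q_i$ and the cubes $Q_1,\dots,Q_{n-1}$ are pairwise disjoint, the hypothesis of Theorem \ref{theoremJOICVIOJVI122} ensures $D_{y_1} f$ is invertible uniformly on the support of the integrand, so repeated integration by parts in $y_1$ produces $O_T(|\xi|^{-T})$ decay for every $T \geq 0$. Taking $T = \beta/2$ and summing over the $\leq M^{n-1}$ distinct tuples bounds the main term by $O(M^n \rho^d |\xi|^{-\beta/2})$; since $M^n \rho^d \lesssim M^{3/4}$ when $\lambda$ approaches $\beta = d/(n-3/4)$, this is $\leq \delta M |\xi|^{-\beta/2}$ once $M$ is large enough in terms of $\delta$.

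For the Bonferroni error, I claim $\PP(E_k \cap E_{k'}) \leq C \rho^{2d}$ uniformly in $k \neq k'$. Conditioning on $X_n(1)$ and on whichever $X_i(k_j)$ are shared between the two tuples leaves $E_k$ and $E_{k'}$ conditionally independent, and each event has conditional probability $O(\rho^d)$ by a Lebesgue-measure estimate applied in some coordinate $i$ where $k_i \neq k'_i$ (using invertibility of $D_{y_i} f$, which holds since the free coordinate still lies in a disjoint cube $Q_i$). Summing over at most $M^{2(n-1)}$ pairs and multiplying by $MA_n$ gives a total error of $O(M^{2n-1} \rho^{2d}) = O(M^{1/2})$, exactly the budget allowed. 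The chief obstacle is ensuring the stationary-phase bound is truly uniform in $\rho$ and $u$; this is handled by observing that $y$-derivatives of $\psi_n(f(y) + \rho u)$ remain bounded uniformly in $\rho \in (0,1)$ and $u \in B_1$, so the constants $C_T$ in integration by parts do not blow up.
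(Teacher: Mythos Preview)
Your approach is correct and pleasantly direct for $n \geq 3$, and in fact cleaner than the paper's: where the paper conditions on $X_n(1)=x_n$, covers the preimage $f^{-1}(B_\rho(x_n))$ by a Whitney decomposition, and runs inclusion--exclusion on those cubes (using the transversality of $\partial f^{-1}(B_\rho)$ to axis planes to control cube overlaps), you apply Bonferroni directly to the events $E_k$ and exploit invertibility of $D_{y_i}f$ to get the uniform bound $\PP(E_k\cap E_{k'})\lesssim\rho^{2d}$. Both arrive at the same oscillatory integral with phase $\xi\cdot f(y)$ and the same error $M^{2n-1}\rho^{2d}\lesssim M^{1/2}$, but your route avoids the geometric covering argument entirely.

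However, your proof has a genuine gap when $n=2$. In that regime $\beta=d$ (not $d/(n-3/4)$), so $\lambda$ can be arbitrarily close to $d$, and then your Bonferroni error
\[
M A_n\sum_{k\neq k'}\PP(E_k\cap E_{k'})\lesssim M^{3}\rho^{2d}\sim M^{3-2d/\lambda}
\]
is only $O(M^{1-\varepsilon})$ for some small $\varepsilon>0$, not $O(M^{1/2})$. Your final claim ``$O(M^{2n-1}\rho^{2d})=O(M^{1/2})$'' is therefore false for $n=2$ with $\lambda\in(4d/5,d)$. The paper handles $n=2$ by a completely separate argument: it observes that $g(x)=\PP(1\in I\mid X_2(1)=x)$ equals $1-(1-\mathbf 1_{B_\rho}*\tilde\psi_1(x))^M$ and is $C^\infty$ with $\|\partial^\alpha g\|_\infty\lesssim_\alpha M^{1-d/\lambda}$, whence $\widehat{\psi_2 g}$ has arbitrary polynomial decay. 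Your framework can be repaired, though: either note that the second-order term $\sum_{k\neq k'}\EE[\mathbf 1_{E_k\cap E_{k'}}e^{2\pi i\xi\cdot X_2(1)}]$ is itself an oscillatory integral with smooth amplitude $h(x)^2$ (where $h=\mathbf 1_{B_\rho}*\tilde\psi_1$) and hence enjoys $|\xi|^{-T}$ decay, or push Bonferroni to order $j$ large enough that $M(M\rho^d)^j\le M^{1/2}$. As written, the $n=2$ case is not covered.
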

\begin{proof}
    We break the analysis of $\EE(H(\xi))$ into two cases, depending on whether $n = 2$ or $n > 2$. Let's start with the case $n = 2$, in which case our assumptions imply that $f$ is a diffeomorphism if the cubes $R_1$ and $R_2$ in which we are choosing random points are chosen small enough. For each $x_2 \in \TT^d$, a change of variables shows that
    \begin{equation}
    \begin{split}
        \PP(A(E_{x_2})) &= 1 - \left( 1 - \int_{f^{-1}(B_{cr}(x_2))} \psi_1(x_1)\; dx_1 \right)^M\\
        &= 1 - \left( 1 - \int_{B_{cr}(x_2)} \frac{(\psi_1 \circ f^{-1})(x_1)}{|\det(Df)(f^{-1}(x_1))|}\; dx_1 \right)^M\\
        &= 1 - \left( 1 - \int_{B_{cr}(x_2)} \tilde{\psi_1}(x_1)\; dx_1 \right)^M,
    \end{split}
    \end{equation}
    where
    \[ \tilde{\psi_1}(x_1) = \frac{(\psi_1 \circ f^{-1})(x_1)}{|\det(Df)(f^{-1}(x_1))|}. \]
    If we define $g(x_2) = \PP(A(E_{x_2}))$, then  $\EE(H(\xi)) = M A_2 \cdot \widehat{\psi_2 g}(\xi)$. We can obtain a bound on $\EE(H(\xi))$ by bounding the partial derivatives of $\psi_2 g$. Bernoulli's inequality implies that
    \begin{equation}
        g(x) = 1 - \left( 1 - \int_{B_{cr}(x)} \tilde{\psi_1}(x_1)\; dx_1 \right)^M \lesssim_L M r^d \lesssim M^{1 - d/\lambda}.
    \end{equation}
    On the other hand, for any multi-index $\alpha$ with $|\alpha| > 0$, $\partial^\alpha g(x)$ is a sum of terms of the form
    \begin{equation} \label{equationDOIJACOIJCIOJ3123123214312}
        (-1)^m \frac{M!}{(M-m)!} \left( 1 - \int_{B_{cr}(x)} \tilde{\psi_1}(x_1)\; dx_1 \right)^{M-m} \left( \prod_{i = 1}^{m} \int_{B_{cr}(x)} \partial_{\alpha_i} \tilde{\psi_1}(x_1)\; dx_1 \right),
    \end{equation}
    where $\alpha_i \neq 0$ for any $i$ and $\alpha = \alpha_1 + \dots + \alpha_m$. This implies $0 < m \leq |\alpha|$ for any terms in the sum. Now the bound $|\partial_{\alpha_i} \tilde{\psi_1}(x_2)| \lesssim_{\alpha_i} 1$ implies that
    \begin{equation} \label{equationDOIAJCOIJAWCIOJAWOIJWAOI}
        \left| \int_{B_{cr}(x)} \partial_{\alpha_i} \tilde{\psi_1}(x_2)\; dx_2 \right| \lesssim_{\alpha_i} r^d.
    \end{equation}
    Applying \eqref{equationDOIAJCOIJAWCIOJAWOIJWAOI} to \eqref{equationDOIJACOIJCIOJ3123123214312} enables us to conclude that
    \begin{equation} \label{equationIOJACIOAJCOIWAJ}
        |\partial_\alpha g(x)| \lesssim_\alpha \max_{0 < m \leq |\alpha|} M^m r^{md} \leq M^{1 - d/\lambda},
    \end{equation}
    Since the fact that $\psi_2 \in C^\infty(\mathbb{T}^d)$ implies that $\| \partial_\alpha \psi_2 \|_{L^\infty(\TT^d)} \lesssim_\alpha 1$ for any multi-index $\alpha$, the product rule applied to \eqref{equationIOJACIOAJCOIWAJ} implies that $\| \partial_\alpha (\psi_2 g) \|_{L^\infty(\TT^d)} \lesssim_\alpha M^{1 - d/\lambda}$ for all $\alpha > 0$, which means that for any $T > 0$ and $\xi \neq 0$,
    \begin{equation} \label{equationCIOCIJIXJIXJI}
        |\EE(H(\xi))| \lesssim_T M^{2 - d/\lambda} |\xi|^{-T}.
    \end{equation}
    Since $\lambda < d$, $2 - d / \lambda < 1$, so setting $T = \beta/2$, fixing $\delta > 0$, and then choosing $M_0$ appropriately, if $M \geq M_0$, \eqref{equationCIOCIJIXJIXJI} shows that
    \begin{equation}
        |\EE(H(\xi))| \leq \delta M |\xi|^{-\beta/2}.
    \end{equation}
    This completes the proof in the case $n = 2$.

    Now we move on to the case where $n \geq 3$, which is made more complicated by the lack of an explicit formula for $\PP(A(E_{x_n}))$. For any cube $Q \in \TT^{d(n-1)}$ and any indices $1 \leq k_1,\dots,k_{n-1} \leq K$, set $k = (k_1,\dots,k_{n-1})$ and let $A(Q;k)$ denote the event that $(X_1(k_1),\dots,X_{n-1}(k_{n-1})) \in Q$. Then
    \begin{equation}
        A(Q) = \bigcup_k A(Q;k).
    \end{equation}
    For any cube $Q$ and index $k$,
    \begin{equation}
        \PP(A(Q;k)) = \int_Q \psi_1(x_1) \dots \psi_{n-1}(x_{n-1})\; dx_1 \dots dx_{n-1},
    \end{equation}
    and so
    \begin{equation} \label{equationOIJCOIJOCIJOAIJOIJAWD}
        \sum_k \PP(A(Q;k)) = M^{n-1} \int_Q \psi_1(x_1) \cdots \psi_{n-1}(x_{n-1})\; dx_1 \dots dx_{n-1}.
    \end{equation}
    An application of inclusion exclusion to \eqref{equationOIJCOIJOCIJOAIJOIJAWD} thus shows that
    \begin{equation} \label{equationIOJVOIVJOVIJPSPOPCOISAPCOIACC}
    \begin{split}
        &\left| \PP(A(Q)) - M^{n-1} \int_Q \psi_1(x_1) \cdots \psi_{n-1}(x_{n-1})\; dx_1 \dots dx_{n-1} \right|\\
        &\quad\quad\quad\leq \sum_{k \neq k'} \PP(A(Q;k) \cap A(Q;k')).
    \end{split}
    \end{equation}
    For each $k,k'$, the quantity $\PP(A(Q;k) \cap A(Q;k'))$ depends on the number of indices $i$ such that $k_i = k_i'$. In particular, if $I \subset \{ 1, \dots, n-1 \}$ is the set of indices where the quantity agrees, then
    \begin{equation}
    \begin{split}
        \PP(A(Q;k) \cap A(Q;k')) = \left( \prod_{i \in I} \int_{Q_i} \psi_i(x)\; dx \right) \cdot \left( \prod_{i \not \in I} \left( \int_{Q_i} \psi_i(x)\; dx \right)^2 \right).
    \end{split}
    \end{equation}
    In particular, if $Q$ has sidelength $l$ and $\#(I) = m$, then $\PP(A(Q;k) \cap A(Q;k')) \lesssim l^{d(2n - m - 2)}$. For each $m$, there are at most $M^{2n - m - 2}$ pairs $k$ and $k'$ with $\#(I) = m$. And so provided $l^d \leq 1/M$,
    \begin{equation} \label{equationCIOJIOJIJCS312412412}
        \sum_{k \neq k'} \PP(A(Q;k) \cap A(Q;k')) \lesssim \sum_{m = 0}^{n-2} (M \cdot l^d)^{2n-m-2} \lesssim M^n l^{dn}.
    \end{equation}
    Thus we conclude from \eqref{equationIOJVOIVJOVIJPSPOPCOISAPCOIACC} and \eqref{equationCIOJIOJIJCS312412412} that
    \begin{equation} \label{equationCIOJOIJXOISJOIj}
        \PP(A(Q)) = M^{n-1} \int_Q \psi_1(x_1) \dots \psi_{n-1}(x_{n-1}) dx_1 \dots dx_{n-1} + O(M^n l^{dn}).
    \end{equation}
    Since $f$ is a submersion, for each $x_n$, $E_{x_n}$ is contained in a $O(r)$-thickening of a $d(n-2)$ dimensional surface in $\TT^{d(n-1)}$. Applying the Whitney covering lemma, we can find a family of almost disjoint dyadic cubes $\{ Q_{ij} : j \geq 0 \}$ such that
    \begin{equation} \label{equationCOIJIOVJVIVIVIII2231}
        E_{x_n} = \bigcup_{i = 0}^\infty \bigcup_{j = 1}^{n_i} Q_{ij}(x_n),
    \end{equation}
    where for each $i \geq 0$, $Q_{ij}$ is a sidelength $r/2^i$ cube, and $n_i \lesssim (r/2^i)^{-d(n-2)}$. It follows from \eqref{equationCOIJIOVJVIVIVIII2231} that
    \begin{equation} \label{equationCOJIAWOIJCAWOIJOI}
        A(E_{x_n}) = \bigcup_{i,j} A(Q_{ij}).
    \end{equation}
    Since $n \geq 3$, we can use \eqref{equationCIOJOIJXOISJOIj} to calculate that
    \begin{equation} \label{equationCOIJCOIJSI}
    \begin{split}
        &\left| \sum_{i,j} \PP(A(Q_{ij})) - M^{n-1} \int_{E_{x_n}} \psi_1(x_1) \dots \psi_{n-1}(x_{n-1})\; dx \right|\\
        &\quad\quad\quad\lesssim \sum_{i = 0}^\infty (r/2^i)^{-d(n-2)} \cdot \left( M^n (r/2^i)^{dn} \right)\\
        &\quad\quad\quad\lesssim r^{2d} M^n \leq M^{-1/2}.
    \end{split}
    \end{equation}
    Thus an inclusion exclusion bound together with \eqref{equationCOJIAWOIJCAWOIJOI} and \eqref{equationCOIJCOIJSI} implies that
    \begin{equation} \label{equationvVIDJDIJ21312ffijsijds}
    \begin{split}
        &\Big| \PP(A(E_{x_n})) - M^{n-1} \int_{E_{x_n}} \psi_1(x_1) \dots \psi_{n-1}(x_{n-1})\; dx \Big|\\
        &\quad\quad\quad \lesssim M^{-1/2} + \sum_{(i_1,j_1) \neq (i_2,j_2)} \PP(A(Q_{i_1j_1}) \cap A(Q_{i_2j_2})).
    \end{split}
    \end{equation}
    The quantity $\PP(A(Q_{i_1j_1}) \cap A(Q_{i_2j_2}))$ depends on the relation between the various sides of $Q_{i_1j_1}$ and $Q_{i_2j_2}$. Without loss of generality, we may assume that $i_1 \geq i_2$. If $I(Q_{i_1j_1},Q_{i_2j_2})$ is the set of indices $1 \leq k \leq n-1$ where $Q_{i_1j_1k} \subset Q_{i_2j_2k}$, and $\#(I(Q_{i_1j_1}, Q_{i_2j_2})) = m$, then
    \begin{equation} \label{equationVOIJVIJISJCISJCIEWJRIJI43234}
    \begin{split}
        \PP(A(Q_{i_1j_1}) \cap A(Q_{i_2j_2})) &\lesssim (M(r/2^{i_1})^d)^m \cdot (M(r/2^{i_1})^d \cdot M(r/2^{i_2})^d)^{n-m-1}\\
        &= 2^{-d[(n-1)i_1 + (n-m-1)i_2]} (Mr^d)^{2n - m-2}.
    \end{split}
    \end{equation}
    %
    %
    %
    %
    %
    The condition that $D_{x_k} f$ is invertible for all $k$ on the domain of $f$ implies that any axis-oriented plane in $\TT^{dn}$ intersects transversally with the level sets of $f$. In particular, this means that the intersection of a $O(r/2^{i_1})$ thickening of a codimension $dm$ axis-oriented hyperplane intersects a $O(r/2^{i_1})$ thickening of $\partial E_{x_n}$ (which has codimension $d$) in a set with volume $O \left( (r/2^{i_1})^d (r/2^{i_1})^{dm} \right)$, and intersects a $O(r/2^{i_2})$ thickening of $\partial E_{x_n}$ in a set with volume $O \left( (r/2^{i_2})^d (r/2^{i_1})^{dm} \right)$. As a particular example of this, for any distinct indices $j_1,\dots,j_m \in \{ 1,\dots, n-1 \}$, and any family of integers $0 \leq n_{11},\dots,n_{md} \leq 2^{i_1}/r$, the set
    \begin{equation}
        \left\{ x \in E_{x_n} : \frac{n_{11}}{2^{i_1}} \leq x_{j_1 1} \leq \frac{(n_{11} + 1)}{2^{i_1}}, \dots, \frac{n_{md}}{2^{i_1}} \leq x_{j_m d} \leq \frac{n_{md} + 1}{2^{i_1}} \right\}
    \end{equation}
    contains at most
    \begin{equation} \label{equationCOIJCOIJSOIJIOJ424141241}
        O \left( (r/2^{i_1})^d (r/2^{i_1})^{dm} (r/2^{i_1})^{-d(n-1)} \right) = O \left( 2^{d(n-m-2)i_1} r^{-d(n-m-2)} \right)
    \end{equation}
    sidelength $r/2^{i_1}$ dyadic cubes in the decomposition of $E_{x_n}$, and at most
    \begin{equation} \label{equationVOIJVOIJVOIJPOPOJP1212312}
        O \left( (r/2^{i_2})^d (r/2^{i_1})^{dm} (r/2^{i_2})^{-d(n-1)} \right) = O \left( 2^{d(n-2) i_2 - (dm) i_1} r^{-d(n-m-2)} \right)
    \end{equation}
    sidelength $r/2^{i_2}$ dyadic cubes in the decomposition of $E_{x_n}$. Letting the integers $\{ n_{kl} \}$ vary over all possible choices we conclude from \eqref{equationCOIJCOIJSOIJIOJ424141241} and \eqref{equationVOIJVOIJVOIJPOPOJP1212312} that for each $i_1$ and $i_2$ there are at most
    \begin{equation} \label{equationCOIJCOIJSOIJOIJ241240912490124091}
    \begin{split}
        &O \left( (2^{i_1}/r)^{dm} \left( 2^{d(n-m-2)i_1} r^{-d(n-m-2)} \right) \left( 2^{d(n-2) i_2 - (dm) i_1} r^{-d(n-m-2)} \right) \right)\\
        &\quad\quad = O \left( 2^{d(n - m - 2)i_1 + d(n-2) i_2} r^{-d(2n - m - 4)} \right)
    \end{split}
    \end{equation}
    pairs $Q_{i_1j_1}$ and $Q_{i_2j_2}$ with $I(Q_{i_1j_1},Q_{i_2j_2}) = m$. Thus we conclude from \eqref{equationVOIJVIJISJCISJCIEWJRIJI43234} and \eqref{equationCOIJCOIJSOIJOIJ241240912490124091} that
    \begin{equation} \label{equationPPOPOKPOPPPPPPDSDSD}
    \begin{split}
        \sum_{(i,j) \neq (i',j')}& \PP(A(Q_{ij}) \cap A(Q_{i'j'}))\\
        &\lesssim \sum_{m = 0}^{n-2} \sum_{i_1 \geq i_2} \left( 2^{d(n-m-2) i_1 + d(n-2) i_2} r^{-d(2n-m-4)} \right)\\
        &\quad\quad\quad\quad\quad\quad\quad\quad\left( 2^{-d((n-1)i_1 + (n-m-1) i_2)} (Mr^d)^{2n - m - 2} \right)\\
        &\lesssim r^{2d} \sum_{m = 0}^{n-2} M^{2n-m-2} \sum_{i_1 \geq i_2} 2^{-d(m+1)i_1 + d(m-1)i_2}\\
        &\lesssim \sum_{m = 0}^{n-2} M^{2n-m-2} r^{2d}\\
        &\lesssim M^{2(n-1)} r^{2d} \lesssim M^{-1/2}.
    \end{split}
    \end{equation}
    Returning to the bound in \eqref{equationvVIDJDIJ21312ffijsijds}, \eqref{equationPPOPOKPOPPPPPPDSDSD} implies that
    \begin{equation} \label{equationFFOGOOBOBOOOTTUUYYUYU7412412}
        \left| \PP(A(E_{x_n})) - M^{n-1} \int_{E_{x_n}} \psi_1(x_1) \dots \psi_{n-1}(x_{n-1})\; dx_1 \dots dx_{n-1} \right| \lesssim M^{-1/2}.
    \end{equation}
    Returning even further back to \eqref{ejowiajeoijo1ij2312j}
    , recalling that $E_{x_n} = f^{-1}(B_r(x_n))$, \eqref{equationFFOGOOBOBOOOTTUUYYUYU7412412} implies
    \begin{equation} \label{equationOIJCIOJSOIJ121231}
        \left| \EE(H(\xi)) - A_n \cdot M^n \int_{\TT^d} \psi_n(x_n) \int_{f^{-1}(B_r(x_n))} \psi_1(x_1) \dots \psi_{n-1}(x_{n-1}) e^{2 \pi i \xi \cdot x_n}\; dx_1\; \dots\; dx_n \right| \lesssim M^{1/2}.
    \end{equation}
    Applying the co-area formula, writing $\psi(x) = \psi_1(x_1) \dots \psi_n(x_n)$, we find
    \begin{equation} \label{equationCOIJCOIJSIOJOI1231}
    \begin{split}
        \int_{\TT^d} & \int_{f^{-1}(B_r(x_n))} \psi(x) e^{2 \pi i \xi \cdot x_n}\; dx_1\; \dots\; dx_n\\
        &= \int_{B_r(0)} \int_{\TT^d} \int_{f^{-1}(x + v)} \psi(x) e^{2 \pi i \xi \cdot x_n}\; dH^{n-2}(x_1,\dots,x_{n-1})\; dx_n\; dv\\
        &= \int_{B_r(0)} \int_{\TT^{d(n-1)}} \psi(x,f(x) - v) \cdot e^{2 \pi i \xi \cdot (f(x) - v)} |Jf(x)|\; dx\; dv\\
        &= \int_{B_r(0)} \int_{\TT^{d(n-1)}} \tilde{\psi}(x,v) \cdot e^{2 \pi i \xi \cdot (f(x) - v)}\; dx\; dv.
    \end{split}
    \end{equation}
    where $\tilde{\psi}(x,v) = \psi(x,f(x) - v) \cdot |Jf(x)|$, and $Jf$ is the rank-$d$ Jacobian of $f$. A consequence of \eqref{equationCOIJCOIJSIOJOI1231} in light of \eqref{equationOIJCIOJSOIJ121231} is that it reduces the study of $\EE(H(\xi))$ to a standard oscillatory integral. If we look at the phase
    \[ \phi(x,v) = \xi \cdot (f(x) - v) \]
    then we see that $\nabla_x \phi(x,v) = Df(x)^T \xi$, which is only equal to zero if $\xi = 0$ since $Df$ is surjective on the domain of $f$ (this is implied by the stronger assumption that $f$ is a diffeomorphism on each variable). Thus the oscillatory integral above has no stationary points in the $x$-variable. Integrating by parts in the $x$-variable thus allows us to conclude that for all $|v| \leq 1$ and $T > 0$,
    \begin{equation} \label{awdoiajdoawijdao41412412312}
        \left|\int_{\TT^{d(n-1)}} \tilde{\psi}(x,v) \cdot e^{2 \pi i \xi \cdot (f(x) - v)}\; dx \right| \lesssim_T |\xi|^{-T}.
    \end{equation}
    Now the bound in \eqref{awdoiajdoawijdao41412412312} can be applied with \eqref{equationCOIJCOIJSIOJOI1231} to conclude that
    \begin{equation} \label{qjweoiqwjeoi3423412321321}
        \left| \int_{\TT^d} \int_{f^{-1}(B_r(x))} \psi(x) e^{2 \pi i \xi \cdot x_n}\; dx_2\; \dots\; dx_n\; dx_1 \right| \lesssim_T r^d |\xi|^{-T}.
    \end{equation}
    In particular, taking $T = \beta/2$ here, combined with \eqref{equationOIJCIOJSOIJ121231}, \eqref{qjweoiqwjeoi3423412321321}, we find that
    \begin{equation}
        |\EE(H(\xi)) | \lesssim M^n r^d |\xi|^{-\beta/2} + M^{1/2} \lesssim M^{3/4} |\xi|^{-\beta/2} + M^{1/2}.
    \end{equation}
    Thus there exists $C > 0$ such that for any $\delta > 0$, there is $r_0 > 0$ such that for $r \leq r_0$, and any nonzero $\xi \in \ZZ^d$,
    \[ |\EE(H(\xi))| \leq \delta M |\xi|^{-\beta/2} + CM^{1/2}. \qedhere \]
\end{proof}

The proof of Lemma \ref{lemmaOIJIOCJSOIJSIOJ123} is the only obstacle preventing us from constructing a Salem set $X$ avoiding the pattern defined by $Z$ with
\[ \fordim(X) = \frac{d}{n-1}. \]
All other aspects of the proof carry through for $d/(n-3/4) \leq \beta \leq d/(n-1)$. The problem with Lemma \ref{lemmaOIJIOCJSOIJSIOJ123} in this scenario is that if we try to repeat the proof when $n \geq 3$ and $M \gg r^{-d/(n-3/4)}$, there is too much `overlap' between the various cubes we use in our covering argument in the various axis; thus the inclusion-exclusion argument found in this proof cannot be used to control $\EE(H(\xi))$ in a significant way. We believe our method can construct Salem sets with Fourier dimension $d/(n-1)$, but new tools are required to improve the estimates on $\EE(H(\xi))$. In the next section, we are able to modify our construction for patterns satisfying a weak translation invariance by a simple trick: we will modify the analogous exponential sums $H(\xi)$ so that $\EE(H(\xi)) = 0$ for all $\xi \neq 0$, so that the analogue of Lemma \ref{lemmaOIJIOCJSOIJSIOJ123} is trivial in this setting, and we thus obtain a construction of Salem sets avoiding patterns with dimension exactly matching those obtained in the Hausdorff dimension setting.

\section{Expectation Bounds for Translational Patterns}

The proof of Theorem \ref{thirdTheorem} uses very similar arguments to Theorem \ref{theoremJOICVIOJVI122}. The concentration bound arguments will be very similar to those applied in the last section. The difference here is that the translation-invariance of the pattern can be used to bypass estimating the expected values like those which caused us the most difficulty in Theorem \ref{thirdTheorem}. We can therefore construct Salem sets avoiding the pattern with dimension exactly matching the Hausdorff dimension of the sets which would be constructed using the method of \cite{OurPaper}. In this section, let
\[ \beta \leq \min \left( \frac{dn - \alpha}{n-1}, d \right). \]
We then show that generic elements of $\mathcal{X}_\beta$ avoid patterns satisfying the assumptions of Theorem \ref{thirdTheorem}.

\begin{lemma}
    Fix $a \in \QQ - \{ 0 \}$, and let $f: V \to \RR$ and $F \subset \RR$ satisfy the assumptions of Theorem \ref{thirdTheorem}. Then for quasi-all $(E,\mu) \in \mathcal{X}_\beta$, and any distinct points $(x_1,\dots,x_n) \in E$,
    \[ x_n - ax_{n-1} - f(x_1,\dots,x_{n-2}) \not \in F. \]
\end{lemma}
\begin{proof}
    Set
    \[ W = \{ (x_1,\dots,x_n) \in \TT^{2d} \times V : x_n - ax_{n-1} - f(x_1,\dots,x_{n-2}) \in F \}. \]
    The assumption that $f$ is a locally Lipschitz map, and thus continuous, implies that for any disjoint, closed cubes $R_1,\dots,R_n \subset \TT^d$ such that $R_1 \times \dots \times R_{n-2} \subset V$, $(R_1 \times \dots \times R_n) \cap W$ will be a closed set. It follows that if we set
    \[ \mathcal{H}(R_1,\dots,R_n) = \{ E \subset \TT^d : (R_1 \times \cdots \times R_n) \cap W \cap E^n = \emptyset \} \]
    and
    \[ H(R_1,\dots,R_n) = \{ (E,\mu) \in \mathcal{X}_\beta: E \in \mathcal{H}(R_1,\dots,R_n) \}, \]
    then $H(R_1,\dots,R_n)$ is an open subset of $\mathcal{X}_\beta$, and $\mathcal{H}(R_1,\dots,R_n)$ is a downward closed family of sets. The proof will be complete will be proved that for each positive integer $m$, and any choice of cubes $R_1,\dots,R_n$ with common sidelength $1/2am$, with $d(R_i,R_j) \geq 10/am$ for $i \neq j$, and with $Q_1 \times \dots \times Q_{n-2}$ a closed subset of $V$, where $Q_i = 2R_i$, then the set $H(R_1,\dots,R_n)$ is dense in $\mathcal{X}_\beta$. To prove $H(R_1,\dots,R_n)$ is dense, we may assume without loss of generality that the set $F$ is $1/m$ periodic, i.e. $F + k / m = F$ for any $k \in \ZZ^d$, by replacing $F$ with a finite union of it's translates. The set $H(R_1,\dots,R_n)$ is downward closed, so we can apply Lemma \ref{LemmaFIOAJFOIWJ}. 
    %
    %
    %
    %

    Since $f$ is a locally Lipschitz map, we may fix $L > 0$ such that for $x_1,x_2 \in R_1 \times \dots \times R_{n-2}$,
    \begin{equation} \label{equationDIOJAWOICJAOIVNAEN2313}
        |f(x_1) - f(x_2)| \leq L |x_1 - x_2|.
    \end{equation}
    Fix a large integer $M > 0$, $\lambda \in [0,\beta)$, $\gamma \in [0,\alpha)$ and pick $r > 0$ such that $r^{-\lambda} \leq M \leq r^{-\lambda} + 1$. If $c = 2(1 + |a| + L n^{1/2})$, and $r$ is suitably small, then
    \begin{equation} \label{equationAIOCJAOICJAWOICNIJNIJNQORI}
        |N(F,cr)| \leq r^{dn - \gamma}
    \end{equation}
    For $1 \leq i \leq n$, consider a family of independent random variables $\{ X_i(k): 1 \leq k \leq M \}$, such that $X_i(k)$ is uniformly distributed on $Q_i$ for each $i$, as well as another independent family of random variables $\{ X_0(k): 1 \leq k \leq M \}$ uniformly distributed in $\TT^d - (Q_1 \cup \dots \cup Q_n)$. Let $I$ be the set of indices $k_n \in \{ 1, \dots, N \}$ such that there are indices $k_1,\dots,k_{n-1} \in \{ 1, \dots, N \}$ with the property that
    \begin{equation}
        d_{\mathbb{H}} \Big( X_n(k_n) - a X_{n-1}(k_{n-1}) - f \big( X_1(k_1), \dots, X_{n-2}(k_{n-2}) \big), F \Big) \leq cr.
    \end{equation}
    %
    If
    \[ S = \{ X_i(k) : 0 \leq i \leq n-1, 1 \leq k \leq N \} \cup \{ X_n(k): k \not \in I \}. \]
    then \eqref{equationDIOJAWOICJAOIVNAEN2313} implies that $N(S,r) \in \mathcal{H}(W;R_1,\dots,R_n)$.

    We claim that for each $x \in Q_n$, the quantity
    \begin{align*}
        P(x) &= \PP ( 1 \in I | X_n(1) = x)
    \end{align*}
    is \emph{independent} of $x$. To see this, we note that because $F$ is $1/m$ periodic, the quantity
    \[ d_{\mathbb{H}} \Big( x - a x_{n-1} - f \big( x_1,\dots,x_{n-2} \big), F \Big) \]
    depends only on $x_1,\dots,x_{n-2}$, and the value of $x - a x_{n-1}$ in $\TT^d / (\ZZ^d / m)$.  Because $a X_{n-1}(k_{n-1})$ is uniformly distributed in $a Q_n$, which is an axis-oriented cube with sidelength $1/m$, it follows that the distribution of the random variable
    \[ x - a X_{n-1}(k_{n-1}) \]
    modulo $\TT^d / (\ZZ^d / m)$, is independent of $x$, which yields the claim.

    Let $P$ denote the common quantity of the values $P(x)$, and let
    \[ G(x_1,\dots,x_{n-1}) = d_{\mathbb{H}} \Big( - a x_{n-1} - f \big( x_1, \dots, x_{n-2}) \big), F \Big). \]
    A union bound, together with \eqref{equationAIOCJAOICJAWOICNIJNIJNQORI}, implies
    \begin{equation} \label{equatojaciojwaofij231o41412414}
    \begin{split}
        P &= \PP \left( \bigcup_{k_1,\dots,k_{n-1}} \Big\{ G(X_1(k_1),\dots, X_{n-1}(k_{n-1})) \leq cr \Big\} \right)\\
        &\leq \sum_{k_1,\dots,k_{n-1}} \PP \left( G(X_1(k_1), \dots, X_{n-1}(k_{n-1})) \leq cr \right) \\
        &\lesssim M^{n-1} |N(F,cr)| \leq M^{n-1} r^{dn-\gamma} \lesssim M^{n - 1 - (dn - \gamma)/\lambda}
    \end{split}
    \end{equation}
    Because $(n-1) - (dn - \gamma)/\lambda < 0$, \eqref{equatojaciojwaofij231o41412414} implies that for suitably large integers $M$ depending on $n$, $d$, $\lambda$, and $\gamma$,
    \begin{equation}
        P \leq 1/2.
    \end{equation}
    Set $A_0 = (1 - P) |\TT^d - Q_1 - \dots - Q_n|$, set $A_i = (1 - P) |Q_i|$ if $i \in \{ 1, \dots, n - 1 \}$, and let $A_n = |Q_n|$. Define
    \[ F(\xi) = \sum_{i = 0}^{n-1} \sum_{k = 1}^M A_i e^{2 \pi i \xi \cdot X_i(k)} + \sum_{k \not \in I} A_n e^{2 \pi i \xi \cdot X_n(k)}. \]
    The choice of coefficients is made so that $\sum A_i \gtrsim 1$, and for any $\xi \neq 0$, $\EE(F(\xi)) = 0$. Indeed, we have
    \begin{align*}
        \EE(F(\xi)) &= \frac{M A_0}{|\TT^d - Q_1 - \dots - Q_n|} \int_{\TT^d - Q_1 - \dots - Q_n} e^{2 \pi i \xi \cdot x}\; dx\\
        &\quad\quad + \sum_{i = 1}^{n-1} \frac{M A_i}{|Q_i|} \int_{Q_i} e^{2 \pi i \xi \cdot x}\; dx\\
        &\quad\quad + \frac{M A_n}{|Q_n|} \int_{R_n} [1 - P(x)] e^{2 \pi i \xi \cdot x}; dx\\
        &= M (1 - P) \int_{\TT^d} e^{2 \pi i \xi \cdot x} = 0.
    \end{align*}
    Split up $F$ into the sum of two exponential sums
    \[ G(\xi) = \sum_{i = 0}^n \sum_{k = 1}^M A_i e^{2 \pi i \xi \cdot X_i(k)} \]
    and
    \[ H(\xi) = \sum_{k \in I} A_n e^{2 \pi i \xi \cdot X_n(k)}. \]
    Applying Lemma \ref{LemmaGISCICS1}, we conclude that for any fixed $\kappa > 0$, there is $C > 0$ such that
    \begin{equation} \label{adwadaf213213421ihwuifhdwquih}
        \PP \left( \sup_{|\xi| \leq N^{1 + \kappa}} |G(\xi) - \EE(G(\xi))| \geq C M^{1/2} \log(M)^{1/2} \right) \leq 1/10.
    \end{equation}
    Lemma \ref{lastconcentrationbound}, which follows from a very similar argument to Lemma \ref{lemma24901401921209} in the last section, implies that
    \begin{equation} \label{equfhwqfhwuqh213421u4hifhaf}
    \begin{split}
        \PP & \left( \sup_{|\xi| \leq M^{1 + \kappa}} | H(\xi) - \EE(H(\xi)) | \geq C M^{1/2} \log(M)^{1/2} \right) \leq 1/10.
    \end{split}
    \end{equation}
    A union bound applied to \eqref{adwadaf213213421ihwuifhdwquih} and \eqref{equfhwqfhwuqh213421u4hifhaf} implies that, since $\EE(F(\xi)) = 0$,
    \begin{equation}
        \PP \left( \sup_{|\xi| \leq M^{1 + \kappa}} | F(\xi) | \geq C M^{1/2} \log(M)^{1/2} \right) \leq 1/5
    \end{equation}
    Set $N = \#(S)$. Then
    \begin{equation} \label{equationDIOWAJDOIWAJIO}
        N \geq M \gtrsim (1/2) r^{-\lambda}.
    \end{equation}
    Now \eqref{adwadaf213213421ihwuifhdwquih}, \eqref{equfhwqfhwuqh213421u4hifhaf}, and \eqref{equationDIOWAJDOIWAJIO} imply that there exists a constant $C > 0$ and an instantiation of the random variables $\{ X_i(k) \}$ such that if $\tilde{a}(X_i(k)) = A_i$, then
    \begin{equation} \label{equationdIOJAWOidjawodij1243123}
        \left| \frac{1}{N} \sum_{x \in S} \tilde{a}(x) e^{2 \pi i \xi \cdot x} \right| \lesssim C N^{1/2} \log(N)^{1/2}.
    \end{equation}
    Since $\sum_{x \in S} \tilde{a}(x) \gtrsim N$, if we set
    \[ a(x) = N \cdot \frac{\tilde{a}(x)}{\sum_{x' \in S} a(x')}, \]
    then the sum
    \[ \frac{1}{N} \sum_{x \in S} a(x) e^{2 \pi i \xi \cdot x} \]
    satisfies the assumptions of Lemma \ref{LemmaFIOAJFOIWJ} for arbitrarily large $N$. We therefore conclude by that Lemma that $H(R_1,\dots,R_n)$ is dense in $\mathcal{X}_\beta$.
\end{proof}

All that remains to prove Theorem \ref{thirdTheorem} is to prove Lemma \ref{lastconcentrationbound}.

\begin{lemma} \label{lastconcentrationbound}
    For any $\kappa > 0$, there exists $C > 0$ such that
    \[ \PP \left( \sup_{|\xi| \leq M^{1 + \kappa}} | H(\xi) - \EE(H(\xi)) | \geq C M^{1/2} \log(M)^{1/2} \right) \leq 1/10 \]
\end{lemma}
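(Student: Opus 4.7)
The plan is to follow the template of Lemma \ref{lemma24901401921209}: condition on the ``other'' randomness, apply Hoeffding's inequality, and then use McDiarmid's inequality to control the conditional expectation. Let $\Sigma$ denote the $\sigma$-algebra generated by $\{ X_i(k) : 0 \leq i \leq n-1,\ 1 \leq k \leq M \}$, and set
\[ \Omega = \bigcup_{k_1,\dots,k_{n-1}} \bigl( a X_{n-1}(k_{n-1}) + T(X_1(k_1),\dots,X_{n-2}(k_{n-2})) \bigr)_{2 n^{1/2}(L+1) r}. \]
Then $\Omega$ is $\Sigma$-measurable, and $H(\xi) = A_n \sum_{k=1}^M Z(k)$ where $Z(k) = \mathbb{I}(X_n(k) \in \Omega) e^{2 \pi i \xi \cdot X_n(k)}$. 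Because the variables $X_n(k)$ are independent of $\Sigma$ and of each other, the $Z(k)$ are conditionally independent given $\Sigma$ with $|Z(k)| \leq 1$. Hoeffding's inequality combined with a union bound over the $\lesssim M^{d(1+\kappa)}$ frequencies $|\xi| \leq M^{1+\kappa}$ will then yield, with probability at least $1 - 1/20$,
\[ \sup_{|\xi| \leq M^{1+\kappa}} \bigl| H(\xi) - \EE(H(\xi) \mid \Sigma) \bigr| \lesssim M^{1/2} \log(M)^{1/2}. \]

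Next I would apply McDiarmid's inequality to
\[ \EE(H(\xi) \mid \Sigma) = \frac{M A_n}{|Q_n|} \int_{\Omega \cap Q_n} e^{2 \pi i \xi \cdot x}\, dx, \]
regarded as a function of the $(n-1)M$ random variables $\{X_i(k) : 1 \leq i \leq n-1,\ 1 \leq k \leq M\}$; the variables $X_0(k)$ do not enter $\Omega$, and so contribute zero bounded-difference constants. Varying a single $X_i(k_0)$ for $i \in \{1,\dots,n-1\}$ affects only those $(n-1)$-tuples $(k_1,\dots,k_{n-1})$ with $k_i = k_0$, of which there are $M^{n-2}$, and by the uniform Minkowski-dimension hypothesis on $T$ each thickened translate involved has measure $\lesssim r^{dn-\gamma}$. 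Hence the symmetric difference of $\Omega$ under such a coordinate change has measure $O(M^{n-2} r^{dn-\gamma})$, so $\EE(H(\xi)\mid\Sigma)$ changes by at most $O(M^{n-1} r^{dn-\gamma})$ in absolute value, and McDiarmid gives
\[ \PP\bigl( |\EE(H(\xi) \mid \Sigma) - \EE(H(\xi))| \geq t \bigr) \leq 4 \exp\bigl( -c t^2 / (M^{2n-1} r^{2(dn-\gamma)}) \bigr). \]

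Since $\lambda < \beta \leq (dn-\alpha)/(n-1)$, I may choose $\gamma > \alpha$ close enough to $\alpha$ that $\lambda < (dn-\gamma)/(n-1)$; with $M \sim r^{-\lambda}$ this makes the ratio $M^{2n-1} r^{2(dn-\gamma)} / M = r^{2(dn-\gamma) - 2(n-1)\lambda}$ decay to zero polynomially in $r$. Thus for $t = C M^{1/2} \log(M)^{1/2}$ with $C$ sufficiently large the McDiarmid exponent dominates $(d(1+\kappa) + 1) \log M$, so a union bound over the $\lesssim M^{d(1+\kappa)}$ frequencies contributes a probability at most $1/20$, and combining with the Hoeffding bound by the triangle inequality completes the proof. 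The main obstacle is the bounded-differences estimate: extracting the factor $M^{n-2} r^{dn-\gamma}$ crucially uses the combinatorial fact that only $M^{n-2}$ tuples contain any given index together with the uniform Minkowski-content bound $|T(x)_r| \lesssim r^{dn-\gamma}$ from the hypothesis of Theorem \ref{thirdTheorem}. This refinement over the trivial ``$\#(I)$'' bound used in Theorem \ref{maintheorem} is what lets the argument reach $\lambda$ arbitrarily close to $(dn-\alpha)/(n-1)$, matching the Hausdorff-dimension bound of \cite{OurPaper}.
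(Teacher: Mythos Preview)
Your proposal is correct and follows essentially the same two-step conditioning argument as the paper: Hoeffding's inequality for $H(\xi) - \EE(H(\xi)\mid\Sigma)$ followed by McDiarmid's inequality for $\EE(H(\xi)\mid\Sigma) - \EE(H(\xi))$, with a union bound over frequencies. Your bounded-differences estimate $O(M^{n-1}r^{dn-\gamma})$ is exactly the paper's, though the paper immediately simplifies this to $O(1)$ using $(n-1)\lambda < dn-\gamma$, whereas you carry the explicit expression through and verify the same inequality at the end; the content is identical.
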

\begin{proof}
    Consider the random set $\Omega$ of values $x_n \in Q_n$ such that there are $k_1,\dots,k_{n-1} \in \{ 1,\dots,M \}$ with
    \begin{equation}
        x_n - f(X_1(k_1), \dots, X_{n-2}(k_{n-2})) \in N(F,cr).
    \end{equation}
    Then
    \begin{equation}
        H(\xi) = \frac{A_n}{M} \sum_{k = 1}^M Z(k,\xi).
    \end{equation}
    where
    \[ Z(k,\xi) = \begin{cases} e^{2 \pi i \xi \cdot X_n(k)} &: X_n(k) \not \in \Omega, \\ 0 &: X_n(k) \in \Omega \end{cases}. \]
    If $\Sigma$ is the $\sigma$ algebra generated by the random variables
    \[ \{ X_i(k) : i \in \{ 1, \dots, n-1 \}, k \in \{ 1, \dots, M \} \}, \]
    then $\Omega$ is measurable with respect to $\Sigma$. Thus the random variables $\{ Z(k,\xi) \}$ are \emph{conditionally independent} given $\Sigma$. Since we have $|Z(k,\xi)| \leq 1$ almost surely, Hoeffding's inequality thus implies that for all $t \geq 0$,
    \begin{equation} \label{equationCOIJCOIJX1232312ssss}
        \PP \left( \left| H(\xi) - \EE(H(\xi)|\Sigma) \right| \geq t \right) \leq 4 \exp \left( \frac{-t^2}{2M} \right).
    \end{equation}
    It is simple to see that
    \begin{equation}
        \EE(H(\xi) | \Sigma) = A_n M \int_\Omega \psi_n(x) e^{2 \pi i \xi \cdot x}\; dx.
    \end{equation}
    Since
    \begin{equation}
        \Omega = \bigcup \left\{ f(X_1(k_1), \dots, X_{n-2}(k_{n-2})) + N(F, cr) : 1 \leq k_1,\dots,k_{n-1} \leq K \right\}.
    \end{equation}
    we see that varying each random variable $X_i(k)$, for $1 \leq i \leq n-1$ while fixing the other random variables adjusts at most $M^{n-2}$ of the sets forming $\Omega$, each of which having volume $O_{d,n,L}(r^{dn - \alpha})$, and thus varying $X_i(k)$ while fixing the other random variables changes $\EE(H(\xi)|\Sigma)$ by at most
    \begin{equation}
        M \cdot O_{d,n,L}(r^{dn - \alpha}) \cdot M^{n-1} \lesssim 1.
    \end{equation}
    Thus McDiarmid's inequality shows that there exists $C > 0$ such that for any $t \geq 0$,
    \begin{equation} \label{equationCIJCIJIVJIOsssssdadwadwad}
        \PP \left( |\EE(H(\xi)|\Sigma) - \EE(H(\xi))| \geq t \right) \leq 4 \exp \left( \frac{-t^2}{CM} \right).
    \end{equation}
    Combining \eqref{equationCOIJCOIJX1232312ssss} and \eqref{equationCIJCIJIVJIOsssssdadwadwad}, we conclude that there exists $C > 0$ such that for each $\xi \in \ZZ^d$,
    \begin{equation} \label{equationCNCIJIJOJOPPPPOPODAWssssssssss}
        \PP \left( | H(\xi) - \EE(H(\xi)) | \geq t  \right) \leq 8 \exp \left( \frac{-t^2}{CM} \right).
    \end{equation}
    Applying a union bound to \eqref{equationCNCIJIJOJOPPPPOPODAWssssssssss} over all $0 < |\xi| \leq M^{1 + \kappa}$ shows that there exists a constant $C > 0$ such that
    \[ \PP \left( \sup_{|\xi| \leq M^{1 + \kappa}} | H(\xi) - \EE(H(\xi)) | \geq C M^{1/2} \log(M)^{1/2} \right) \leq 1/10. \qedhere \]
\end{proof}

\section{Appendix: Justifying Discretization}

The main goal of this appendix is a proof of Lemma \ref{LemmaFIOAJFOIWJ}. Throughout this section, we will apply mollification. So we fix a smooth, non-negative function $\phi \in C^\infty(\TT^d)$ such that $\phi(x) = 0$ for $|x| \geq 2/5$ and $\int_{\TT^d} \phi(x)\; dx = 1$.
%
%
For each $r \in (0,1)$, we can then define $\phi_r \in C^\infty(\TT^d)$ by writing
\[ \phi_r(x) = \begin{cases} r^{-d} \phi(x/r) &: |x| < r, \\ 0 &: \text{otherwise}. \end{cases} \]
The following standard properties hold:
\begin{enumerate}
    \item[(1)] For each $r \in (0,1)$, $\phi_r$ is a non-negative smooth function with
    \begin{equation}
        \int_{\TT^d} \phi_r(x)\; dx = 1,
    \end{equation}
    and $\phi_r(x) = 0$ for $|x| \geq r$.

    \item[(2)] For any $r \in (0,1)$,
    \begin{equation} \label{equationDIOJAOIJVIV23242}
        \big\| \widehat{\phi}_r \big\|_{L^\infty(\ZZ^d)} = 1.
    \end{equation}

    %

    \item[(3)] For each $\xi \in \ZZ^d$,
    \begin{equation} \label{approximationtoidentitypointwiseconvergence}
        \lim_{r \to 0} \widehat{\phi}_r(\xi) = 1.
    \end{equation}

    \item[(4)] For each $T > 0$, for all $r > 0$, and for any non-zero $\xi \in \ZZ^d$,
    \begin{equation} \label{molificationdecaybound}
        |\widehat{\phi}_r(\xi)| \lesssim_T r^{-T} |\xi|^{-T}.
    \end{equation}
\end{enumerate}
We will prove Lemma \ref{LemmaFIOAJFOIWJ} after a series of more elementary lemmas which give results about the metric space $\mathcal{X}_\beta$.

\begin{lemma} \label{smoothdensitylemma}
    The set of all $(E,\mu) \in \mathcal{X}_\beta$ with $\mu \in C^\infty(\TT^d)$ and $\text{supp}(\mu) = E$ is dense in $\mathcal{X}_\beta$.
\end{lemma}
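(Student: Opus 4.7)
The plan is to approximate $(E, \mu) \in \mathcal{X}_\beta$ by a sequence $(E^{(r)}, \mu^{(r)})$ obtained by convolving $\mu$ with $\phi_r$ and adding a small perturbation that forces the support of $\mu^{(r)}$ to cover all of $E$ up to an $O(r)$ error.

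The first step is to verify that the mollification $\mu * \phi_r$ converges to $\mu$ in $M(\beta)$. Writing $\widehat{\mu * \phi_r - \mu}(\xi) = \widehat{\mu}(\xi)(\widehat{\phi_r}(\xi) - 1)$, I would split the supremum defining $\|\cdot\|_{M(\lambda)}$ into $|\xi| \leq M$ and $|\xi| > M$: on the low-frequency side there are only finitely many $\xi \in \ZZ^d$, and pointwise convergence \eqref{approximationtoidentitypointwiseconvergence} gives uniform smallness as $r \to 0$; on the high-frequency side the bound $|\widehat{\phi_r}(\xi) - 1| \leq 2$ combined with the decay $|\widehat{\mu}(\xi)||\xi|^{\lambda/2} \to 0$ from \eqref{equationGFSCSC4} gives smallness uniformly in $r$. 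However, $\text{supp}(\mu * \phi_r)$ can only approximate $\text{supp}(\mu)$, which may be strictly smaller than $E$, so this step alone does not produce a set Hausdorff-close to $E$.

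To repair the support, I pick a finite set $\{x_1, \dots, x_N\} \subset E$ that is $(\rho r)$-dense in $E$, where $\rho > 0$ is chosen so that $\phi > 0$ on $B_\rho(0)$ (which we may arrange by choosing $\phi$ suitably, since the author fixes $\phi$ himself). Set $\nu_r = N^{-1} \sum_{k=1}^N \phi_r(\cdot - x_k)$; this is a smooth probability density with $E \subset \text{supp}(\nu_r) \subset E_r$. Define the convex combination
\[ \mu^{(r)} = (1 - \varepsilon_r)(\mu * \phi_r) + \varepsilon_r \nu_r, \qquad E^{(r)} = \text{supp}(\mu^{(r)}), \]
with $\varepsilon_r = r^{\beta/2}$. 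Then $\mu^{(r)}$ is a smooth probability density and $E \subset E^{(r)} \subset E_r$, so $d_{\mathbb{H}}(E^{(r)}, E) \to 0$.

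The main technical step that remains is checking that the perturbation by $\varepsilon_r \nu_r$ does not spoil the $M(\beta)$ convergence. Using $|\widehat{\nu_r}(\xi)| \leq |\widehat{\phi_r}(\xi)|$, the trivial bound \eqref{equationDIOJAOIJVIV23242} for $|\xi| \leq 1/r$ and the rapid decay \eqref{molificationdecaybound} for $|\xi| > 1/r$ combine to give $\|\nu_r\|_{M(\lambda)} \lesssim_\lambda r^{-\lambda/2}$, whence $\varepsilon_r \|\nu_r\|_{M(\lambda)} \lesssim r^{(\beta - \lambda)/2} \to 0$ for every $\lambda < \beta$. Together with $\varepsilon_r \|\mu\|_{M(\lambda)} \to 0$ and the mollification estimate from the first step, this yields $\mu^{(r)} \to \mu$ in $M(\beta)$. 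The main obstacle is this balancing act: one needs $\varepsilon_r$ small enough that the $r^{-\lambda/2}$ growth in $\|\nu_r\|_{M(\lambda)}$ is absorbed for every $\lambda < \beta$ simultaneously, yet positive so that $E \subset E^{(r)}$.
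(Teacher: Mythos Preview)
Your argument is correct and takes a genuinely different route from the paper. The paper proceeds in two separate stages: first it shows that mollification $(E_r,\mu_0*\phi_r)$ approximates any $(E_0,\mu_0)\in\tilde{\mathcal{X}_\beta}=\{(E,\mu):\text{supp}(\mu)=E\}$, and then it runs a Baire category argument to show $\tilde{\mathcal{X}_\beta}$ is dense (in fact comeager) in $\mathcal{X}_\beta$, by writing $\tilde{\mathcal{X}_\beta}$ as a countable intersection of the open sets $A(Q)=\{(E,\mu):E\cap Q=\emptyset\text{ or }\mu(Q)>0\}$ over rational cubes $Q$, and perturbing by a small smooth bump inside $Q$ to prove each $A(Q)$ dense. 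Your approach instead handles the support repair and the smoothing in a single explicit construction, with the quantitative choice $\varepsilon_r=r^{\beta/2}$ doing the work of balancing the $r^{-\lambda/2}$ blow-up of $\|\nu_r\|_{M(\lambda)}$ against the need for $\varepsilon_r>0$. Your route is more elementary and constructive; the paper's route has the side benefit of establishing that $\tilde{\mathcal{X}_\beta}$ is \emph{comeager} (not merely dense) in $\mathcal{X}_\beta$, a fact the author singles out in the remark following the lemma and which fits the paper's broader Baire-category philosophy. One minor point: your argument needs $\phi>0$ on some ball about the origin, which is not literally among the properties the paper lists for $\phi$, but as you note this is a harmless extra hypothesis to impose when fixing $\phi$.
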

\begin{proof}
    Let 
    \begin{equation}
        \tilde{\mathcal{X}_\beta} = \{ (E,\mu) \in \mathcal{X}_\beta : \text{supp}(\mu) = E \}.
    \end{equation}
    We begin by proving that the set of all $(E,\mu) \in \tilde{\mathcal{X}_\beta}$ such that $\mu \in C^\infty(\TT^d)$ is dense in $\tilde{\mathcal{X}_\beta}$. Fix $(E_0,\mu_0) \in \tilde{\mathcal{X}_\beta}$. For each $r \in (0,1)$, consider the convolved measure $\mu_r = \mu_0 * \phi_r$. Then $\mu_r \in C^\infty(\TT^d)$ and $\text{supp}(\mu_r) = E_r$. We claim that $\lim_{r \to 0} (E_r,\mu_r) = (E_0,\mu_0)$, which would complete the proof. Since $d_{\mathbb{H}}(E_0,E_r) \leq r$, we find that $\lim_{r \to 0} E_r = E_0$ holds with respect to the Hausdorff metric. Now fix $\lambda \in (0,\beta]$ and $\delta > 0$. For each $\xi \in \ZZ^d$, $|\widehat{\mu}_r(\xi)| = |\widehat{\phi}_r(\xi)| |\widehat{\mu}_0(\xi)|$, so
    \begin{equation} \label{equationFFSCI}
        |\xi|^{\lambda/2} |\widehat{\mu}_r(\xi) - \widehat{\mu}_0(\xi)| = |\xi|^{\lambda/2} |\widehat{\phi}_r(\xi) - 1| |\widehat{\mu}_0(\xi)|.
    \end{equation}
    We control \eqref{equationFFSCI} using the fact that $|\widehat{\mu}_0(\xi)|$ is small when $\xi$ is large, and $|\widehat{\phi}_r(\xi) - 1|$ is small when $\xi$ is small. Since $(E_0,\mu_0) \in \mathcal{X}_\beta$, we can apply \eqref{equationGFSCSC4} to find $R > 0$ such that for $|\xi| \geq R$,
    \begin{equation} \label{equationDIICSIC}
        |\xi|^{\lambda/2} |\widehat{\mu}_0(\xi)| \leq \delta/2.
    \end{equation}
    Combining \eqref{equationFFSCI}, \eqref{equationDIICSIC}, and \eqref{equationDIOJAOIJVIV23242}, for $|\xi| \geq R$ we find that
    \begin{equation} \label{equationDSCISIIXX}
        |\xi|^{\lambda/2} |\widehat{\mu}_r(\xi) - \widehat{\mu}_0(\xi)| \leq \delta.
    \end{equation}
    On the other hand, \eqref{approximationtoidentitypointwiseconvergence} shows that there exists $r_0 > 0$ such that for $r \leq r_0$ and $|\xi| \leq R$,
    \begin{equation} \label{equationDISCIIS}
        |\xi|^{\lambda/2} |\widehat{\phi}_r(\xi) - 1| \leq \delta.
    \end{equation}
    The $(L^1,L^\infty)$ bound for the Fourier transform implies that $|\widehat{\mu}_0(\xi)| \leq \mu_0(\TT^d) = 1$, which combined with \eqref{equationDISCIIS} gives that for $r \leq r_0$ and $|\xi| \leq R$,
    \begin{equation} \label{equatioNFISISCISI}
        |\xi|^{\lambda/2} |\widehat{\mu}_r(\xi) - \widehat{\mu_0}(\xi)| \leq \delta.
    \end{equation}
    Putting together \eqref{equationDSCISIIXX} and \eqref{equatioNFISISCISI} shows that for $r \leq r_0$, $\| \mu_r - \mu_0 \|_{M(\lambda)} \leq \delta$. Since $\delta$ and $\lambda$ were arbitrary, we conclude that $\lim_{r \to 0} \mu_r = \mu_0$. Thus the set of all pairs $(E,\mu) \in \tilde{\mathcal{X}_\beta}$ with $\mu \in C^\infty(\TT^d)$ is dense in $\tilde{\mathcal{X}_\beta}$.

    Our proof will therefore be complete if we can show that $\tilde{\mathcal{X}_\beta}$ is dense in $\mathcal{X}_\beta$. We prove this using a Baire category argument. For each closed cube $Q \subset \TT^d$, let
    \[ A(Q) = \{ (E,\mu) \in \TT^d: (E \cap Q) = \emptyset\ \text{or}\ \mu(Q) > 0 \}. \]
    Then $A(Q)$ is an open set. If $\{ Q_k \}$ is a countable sequence enumerating all cubes with rational corners in $\TT^d$, then
    \begin{equation}
        \bigcap_{k = 1}^\infty A(Q_k) = \tilde{\mathcal{X}_\beta}.
    \end{equation}
    Thus it suffices to show that $A(Q)$ is dense in $\mathcal{X}_\beta$ for each closed cube $Q$. To do this, we fix $(E_0,\mu_0) \in \mathcal{X}_\beta - A(Q)$, $\lambda \in [0,\beta)$, and $\varepsilon > 0$, and try and find $(E,\mu) \in A(Q)$ with $d_\mathbb{H}(E,E_0) \leq \varepsilon$ and $\| \mu_0 - \mu \|_{M(\lambda)} \leq \varepsilon$.

    Because $(E_0,\mu_0) \in \mathcal{X}_\beta - A(Q)$, we know $E_0 \cap Q \neq \emptyset$ and $\mu_0(Q) = 0$. Find a smooth probability measure $\nu$ supported on $N(E_0,\varepsilon) \cap Q$ and, for $t \in (0,1)$, define $\mu_t = (1 - t) \mu_0 + t \nu$. Then $\text{supp}(\mu_t) \subset N(E_0,\varepsilon)$, so if we let $E = \text{supp}(\nu) \cup \text{supp}(\mu)$, then $d_\mathbb{H}(E,E_0) \leq \varepsilon$. Clearly $(E,\mu_t) \in A(Q)$ for $t > 0$. And
    \begin{equation}
        \| \mu_t - \mu_0 \|_{M(\lambda)} \leq t \left( \| \mu_0 \|_{M(\lambda)} + \| \nu \|_{M(\lambda)} \right),
    \end{equation}
    so if we choose $t \leq \varepsilon \cdot (\| \mu \|_{M(\lambda)} + \| \nu \|_{M(\lambda)})^{-1}$ we find $\| \mu_t - \mu \|_{M(\lambda)} \leq \varepsilon$. Since $\varepsilon$ was arbitrary, we conclude $A(Q)$ is dense in $\mathcal{X}_\beta$.
\end{proof}

\begin{remark}
    The reason we must work with the metric space $\mathcal{X}_\beta$ rather than the smaller space $\tilde{\mathcal{X}_\beta} \subset \mathcal{X}_\beta$ is that $\tilde{\mathcal{X}_\beta}$ is not a closed subset of $\mathcal{X}_\beta$, and so is not a complete metric space, preventing the use of the Baire category theorem. However, the latter arguments in the proof of Lemma \ref{smoothdensitylemma} shows that quasi-all elements of $\mathcal{X}_\beta$ belong to $\tilde{\mathcal{X}_\beta}$, so that one can think of $\mathcal{X}_\beta$ and $\tilde{\mathcal{X}_\beta}$ as being equal `generically'.
\end{remark}

The density argument of Lemma \ref{LemmaFIOAJFOIWJ} requires constructing approximations to an arbitrary element of $(E_0,\mu_0) \in \mathcal{X}_\beta$ by $(E,\mu) \in \mathcal{X}_\beta$ such that $E \in \mathcal{A}$. We do this by multiplying $\mu_0$ by a smooth function $f \in C^\infty(\TT^d)$ which cuts off parts of $\mu_0$ which cause the support of $\mu_0$ to fail to be in $\mathcal{A}$. As long as $\mu_0$ is appropriately smooth, and the Fourier transform of $f$ decays appropriately quickly, the next lemma shows that $f \mu_0 \approx \mu_0$.

\begin{lemma} \label{LemmaTTSICICS}
    Consider a finite measure $\mu_0$ on $\TT^d$, as well as a smooth probability density function $f \in C^\infty(\TT^d)$. If we define $\mu = f \mu_0$, then for any $\lambda \in [0,d)$,
    \[ \| \mu - \mu_0 \|_{M(\lambda)} \lesssim_d \| \mu_0 \|_{M(d+1)} \| f \|_{M(\lambda)}. \]
\end{lemma}
\begin{proof}
    Since $\widehat{\mu} = \widehat{f} * \widehat{\mu_0}$, and $\widehat{f}(0) = 1$, for each $\xi \in \ZZ^d$ we have
    \begin{equation} \label{equationPPYTUECUUCS}
    \begin{split}
        |\xi|^{\lambda/2} |\widehat{\mu}(\xi) - \widehat{\mu}_0(\xi)| &= |\xi|^{\lambda/2} \left| \sum_{\eta \neq \xi} \widehat{f}(\xi - \eta) \widehat{\mu}_0(\eta) \right|.
    \end{split}
    \end{equation}
    If $|\eta| \leq |\xi|/2$, then $|\xi|/2 \leq |\xi - \eta| \leq 2 |\xi|$, so
    \begin{equation} \label{equationPPDOSO}
        |\xi|^{\lambda/2} |\widehat{f}(\xi - \eta)| \leq \| f \|_{M(\lambda)} |\xi|^{\lambda/2} |\xi-\eta|^{-\lambda/2} \leq 2^{\lambda/2} \| f \|_{M(\lambda)} \lesssim_d \| f \|_{M(\lambda)}.
    \end{equation}
    Thus the bound \eqref{equationPPDOSO} implies
    \begin{equation} \label{equationGGPSOVVCSI}
    \begin{split}
        |\xi|^{\lambda/2} \left| \sum_{0 \leq |\eta| \leq |\xi|/2} \widehat{f}(\xi - \eta) \widehat{\mu}_0(\eta) \right| &\lesssim_{\mu_0,d} \| \mu_0 \|_{M(d+1)} \| f \|_{M(\lambda)} \left( 1 + \sum_{0 < |\eta| \leq |\xi|/2} \frac{1}{|\eta|^{d+1}} \right)\\
        &\lesssim_d \| \mu_0 \|_{M(d+1)} \| f \|_{M(\lambda)} \leq \| \mu_0 \|_{M(d+1)} \| f \|_{M(\lambda)}.
    \end{split}
    \end{equation}
    %
    On the other hand, for all $\eta \neq \xi$,
    \begin{equation} \label{equationGGDPSOX}
    \begin{split}
        |\widehat{f}(\xi - \eta)| \leq  \| f \|_{M(\lambda)} |\xi - \eta|^{-\lambda} \leq \| f \|_{M(\lambda)}.
    \end{split}
    \end{equation}
    Thus we calculate that
    \begin{equation} \label{equationGGHOODPPS}
    \begin{split}
        |\xi|^{\lambda/2} \left| \sum_{\substack{|\eta| > |\xi|/2\\ \eta \neq \xi}} \widehat{f}(\xi - \eta) \widehat{\mu}_0(\eta) \right| &\lesssim_{d,\mu_0} \| \mu_0 \|_{M(d+1)} \| f \|_{M(\lambda)} \cdot |\xi|^{\lambda/2} \sum_{|\eta| > |\xi|/2} \frac{1}{|\eta|^{d+1}}\\
        &\lesssim_d \| \mu_0 \|_{M(d+1)} \| f \|_{M(\lambda)}.
    \end{split}
    \end{equation}
    Combining \eqref{equationPPYTUECUUCS}, \eqref{equationGGPSOVVCSI} and \eqref{equationGGHOODPPS} completes the proof.
\end{proof}

The bound in Lemma \ref{LemmaTTSICICS}, if $\| f \|_{M(\lambda)}$ is taken appropriately small, also implies that the Hausdorff distance between the supports of $\mu$ and $\mu_0$ are small.

\begin{lemma} \label{LemmaTAOIAWOIDJ12301}
    Fix a probability measure $\mu_0 \in C^\infty(\TT^d)$ and $\lambda \in [0,d)$. For any $\varepsilon > 0$, there exists $\delta > 0$ such that if $\mu \in C^\infty(\TT^d)$, $\text{supp}(\mu) \subset \text{supp}(\mu_0)$, and $\| \mu_0 - \mu \|_{M(\lambda)} \leq \delta$, then $d_\mathbb{H}(\text{supp}(\mu),\text{supp}(\mu_0)) \leq \varepsilon$.
\end{lemma}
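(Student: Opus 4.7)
The plan is to argue by contradiction using a test-function/Fourier pairing. The inclusion $\text{supp}(\mu) \subset \text{supp}(\mu_0)$ already forces one half of the Hausdorff distance to vanish, namely $\sup_{x \in \text{supp}(\mu)} d(x,\text{supp}(\mu_0)) = 0$. So the content of the lemma is to bound $\sup_{y \in \text{supp}(\mu_0)} d(y,\text{supp}(\mu)) \leq \varepsilon$.

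First I would cover the compact set $\text{supp}(\mu_0)$ by finitely many balls $B_{\varepsilon/2}(x_1),\dots,B_{\varepsilon/2}(x_N)$ with each $x_j \in \text{supp}(\mu_0)$. Since $\mu_0$ is a smooth density and each $x_j$ lies in its support, the quantity $c_j = \int_{B_{\varepsilon/4}(x_j)} \mu_0(x)\, dx$ is strictly positive, and by taking $c = \min_j c_j > 0$ I obtain a uniform lower bound. For each $j$ I choose a smooth bump $f_j \in C^\infty(\TT^d)$ with $0 \leq f_j \leq 1$, $f_j \equiv 1$ on $B_{\varepsilon/4}(x_j)$, and $f_j \equiv 0$ outside $B_{\varepsilon/2}(x_j)$. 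Now suppose for contradiction that $d_{\mathbb{H}}(\text{supp}(\mu),\text{supp}(\mu_0)) > \varepsilon$: then some $y \in \text{supp}(\mu_0)$ satisfies $d(y,\text{supp}(\mu)) > \varepsilon$, hence some $x_j$ containing $y$ in its $\varepsilon/2$ ball lies at distance greater than $\varepsilon/2$ from $\text{supp}(\mu)$, giving $B_{\varepsilon/2}(x_j) \cap \text{supp}(\mu) = \emptyset$. For that $j$ we get $\int f_j\, d\mu = 0$ while $\int f_j\, d\mu_0 \geq c$.

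The Fourier bridge is Parseval: since $\mu,\mu_0$ are probability measures, $\widehat{\mu}(0) = \widehat{\mu_0}(0) = 1$, and for each smooth $f$
\begin{equation}
    \int f\, d\mu - \int f\, d\mu_0 = \sum_{\xi \neq 0} \widehat{f}(-\xi)\bigl(\widehat{\mu}(\xi) - \widehat{\mu_0}(\xi)\bigr).
\end{equation}
Using the $M(\lambda)$-control $|\widehat{\mu}(\xi) - \widehat{\mu_0}(\xi)| \leq \delta |\xi|^{-\lambda/2}$ for $\xi \neq 0$ gives
\begin{equation}
    \left| \int f_j\, d\mu - \int f_j\, d\mu_0 \right| \leq \delta \sum_{\xi \neq 0} |\widehat{f_j}(\xi)|\, |\xi|^{-\lambda/2},
\end{equation}
and the tail sum is finite because $f_j \in C^\infty(\TT^d)$ forces $|\widehat{f_j}(\xi)|$ to decay faster than any polynomial. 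Taking $C = \max_j \sum_{\xi \neq 0} |\widehat{f_j}(\xi)|\,|\xi|^{-\lambda/2}$ (a finite constant depending only on $\varepsilon$ and the finitely many $f_j$), the contradiction $c \leq \delta C$ is avoided by choosing $\delta < c/C$ at the outset.

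The only mildly delicate point is ensuring the constants $c$ and $C$ are uniform over the possible ``bad'' balls; the finite covering step handles this, since both $c$ and $C$ depend only on $\varepsilon$ and $\mu_0$ and not on the hypothetical $\mu$ producing the contradiction. There is no real obstruction beyond bookkeeping, and I do not anticipate needing any extra oscillatory estimates.
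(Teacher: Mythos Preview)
Your proposal is correct and follows essentially the same approach as the paper's proof: cover $\text{supp}(\mu_0)$ by finitely many small balls, place a smooth bump function in each, use Parseval to compare $\int f_j\,d\mu$ with $\int f_j\,d\mu_0$ via the $M(\lambda)$ norm, and choose $\delta$ so that each integral against $\mu$ stays positive. The only cosmetic differences are that the paper argues directly (showing $\int f_i\,d\mu > 0$ for every $i$) rather than by contradiction, uses balls of radius $\varepsilon/3$, and bounds $|\widehat{\mu}(\xi)-\widehat{\mu_0}(\xi)|$ crudely by $\|\mu-\mu_0\|_{M(\lambda)}$ without retaining the $|\xi|^{-\lambda/2}$ factor you keep; none of this affects the substance.
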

\begin{proof}
    Consider any cover of $\text{supp}(\mu_0)$ by a family of radius $\varepsilon/3$ balls $\{ B_1,\dots,B_N \}$, and for each $i \in \{ 1, \dots, N \}$, consider a smooth function $f_i \in C_c^\infty(B_i)$ such that there is $s > 0$ with
    \begin{equation} \label{equationCIJCIJCIJ}
        \int f_i(x) d\mu_0(x) \geq s
    \end{equation}
    for each $i \in \{ 1, \dots, N \}$. Fix $A > 0$ with
    \begin{equation} \label{equationvVVIJIJX}
        \sum_{\xi \neq 0} |\widehat{f}_i(\xi)| \leq A
    \end{equation}
    for all $i \in \{ 1, \dots, N \}$ as well. Set $\delta = s/2A$. If $\| \mu_0 - \mu \|_{M(\lambda)} \leq \delta$, we apply Plancherel's theorem together with \eqref{equationCIJCIJCIJ} and \eqref{equationvVVIJIJX} to conclude that
    \begin{equation} \label{equationIVIJVIVJIVJ}
    \begin{split}
        \left| \int f_i(x) d\mu(x)\; dx - \int f_i(x) d\mu_0(x) \right| &= \left| \sum_{\xi \in \ZZ^d} \widehat{f}_i(\xi) \left( \widehat{\mu}(\xi) - \widehat{\mu}_0(\xi) \right) \right|\\
        &\leq A \| \mu_0 - \mu \|_{M(\lambda)}\\
        &\leq s/2.
    \end{split}
    \end{equation}
    Thus we conclude from \eqref{equationCIJCIJCIJ} and \eqref{equationIVIJVIVJIVJ} that
    \begin{equation} \label{equationVIOJVIOSJCICJXXXXX}
        \int f_i(x) d\mu(x)\; dx \geq \int f_i(x) d\mu_0(x) - s/2 \geq s/2 > 0.
    \end{equation}
    Since equation \eqref{equationVIOJVIOSJCICJXXXXX} holds for each $i \in \{ 1,\dots, N \}$, the support of $\mu$ intersects every ball in $\{ B_1, \dots, B_N \}$. Combined with the assumption that $\text{supp}(\mu) \subset \text{supp}(\mu_0)$, this implies that $d_\mathbb{H}(\mu_0,\mu) \leq \varepsilon$.
\end{proof}

Now we have the technology to prove Lemma \ref{LemmaFIOAJFOIWJ}.

\begin{proof}[Proof of Lemma \ref{LemmaFIOAJFOIWJ}]
    Fix $(E_0,\mu_0) \in \mathcal{X}_\beta$. By Lemma \ref{smoothdensitylemma}, without loss of generality, we may assume that $\mu_0 \in C^\infty(\TT^d)$ and that $\text{supp}(\mu_0) = E_0$. Our goal, for any $\lambda \in [0,\beta)$ and $\delta_0 > 0$, is to find $(E,\mu) \in \mathcal{X}_\beta$ such that $E \in \mathcal{A}$, $d_{\mathbb{H}}(E,E_0) \leq \delta_0$, and $\| \mu - \mu_0 \|_{M(\gamma)} \leq \delta_0$.

    Fix $\delta > 0$, $\varepsilon > 0$, and $\lambda \in (\gamma,\beta)$, and consider a set $S = \{ x_1, \dots, x_N \}$ and $\{ a_1, \dots, a_N \}$ satisfying the assumptions of the Lemma. If we set
    \[ \eta = \frac{1}{N} \sum_{k = 1}^N a_k \delta_{x_k}, \]
    then $\eta$ is a probability measure, and Property (2) implies that for $|\xi| \leq (1/r)^{1 + \kappa}$,
    \begin{equation} \label{equationBLCHACHWODWORH}
        |\widehat{\eta}(\xi)| \leq C N^{-1/2} \log(N) + \delta |\xi|^{-\lambda/2}.
    \end{equation}
    Consider the function $f = \eta * \phi_r$, where $\phi_r$ is the mollifier defined in the notation section. For each $\xi \in \ZZ^d$,
    \begin{equation} \label{equation6666GGCIS}
        \widehat{f}(\xi) = \widehat{\eta}(\xi) \widehat{\phi}_r(\xi).
    \end{equation}
    %
    For $|\xi| \leq 1/r$, \eqref{equationBLCHACHWODWORH} and \eqref{equationDIOJAOIJVIV23242} together with \eqref{equation6666GGCIS} imply that there is $\kappa_1 > 0$ depending on $\beta,\lambda$, and $\gamma$ such that
    \begin{equation}
        |\widehat{f}(\xi)| \leq C N^{-1/2} \log(N) + \delta |\xi|^{-\lambda/2} \leq (C N^{-\kappa_1} + \delta) |\xi|^{-\gamma/2}.
    \end{equation}
    Thus if $N$ is suitably large, we conclude that for $|\xi| \leq 1/r$,
    \begin{equation} \label{equationOIJVIOJOIJ21314}
        |\widehat{f}(\xi)| \leq 2 \delta |\xi|^{-\gamma/2}.
    \end{equation}
    If $(1/r) \leq |\xi| \leq (1/r)^{1+\kappa}$, \eqref{molificationdecaybound} implies $|\widehat{\phi}_r(\xi)| \lesssim_\beta r^{-\beta/2} |\xi|^{-\beta/2}$, which together with \eqref{equationDIOJAOIJVIV23242} and \eqref{equationBLCHACHWODWORH} applied to \eqref{equation6666GGCIS} allows us to conclude that there is $\kappa_2 > 0$ depending on $\beta$, $\lambda$, and $\gamma$, such that
    \begin{equation} \label{equationGGOOSC66341}
    \begin{split}
        |\widehat{f}(\xi)| &= \delta |\xi|^{-\lambda/2} + O_\beta \left( N^{-1/2} \log(N) \cdot r^{-\beta/2} |\xi|^{-\beta/2} \right)\\
        &\leq \left( \delta + O_{\beta,\kappa} \left( N^{-1/2} \log(N) \cdot r^{-\beta/2} |\xi|^{-(\beta - \lambda)/2} \right) \right) |\xi|^{- \lambda/2}\\
        &\leq \left( \delta + O_{\beta,\kappa} \left( N^{-1/2} \log(N) r^{-\lambda/2} \right) \right) |\xi|^{-\lambda/2}\\
        &\leq ( \delta + O_{\beta,\kappa} (N^{-\kappa_2(\beta,\varepsilon)}) ) |\xi|^{-\gamma/2}.
    \end{split}
    \end{equation}
    Thus if $N$ is sufficiently large, then for $(1/r) \leq |\xi| \leq (1/r)^{1+\kappa}$,
    \begin{equation} \label{equationaJJDIWJDIWJDIWJIDJW44141}
        |\widehat{f}(\xi)| \leq 2 \delta |\xi|^{-\gamma/2}.
    \end{equation}
    Finally, if $|\xi| \geq (1/r)^{1 + \kappa}$, we apply \eqref{molificationdecaybound} for $T \geq \beta/2$ together with the bound $\| \widehat{\eta} \|_{L^\infty(\ZZ^d)} = 1$, which follows because $\eta$ is a probability measure, to conclude that
    \begin{equation} \label{equationGGUSCCCYVSSXX998723}
    \begin{split}
        |\widehat{f}(\xi)| &\lesssim_T r^{-T} |\xi|^{-T}\\
        &= r^{-T} |\xi|^{\beta/2 - T} \cdot |\xi|^{-\beta/2}\\
        &\leq r^{-T} (1/r)^{(\beta/2 - T)(1 + \kappa)} \cdot |\xi|^{-\beta/2}\\
        &= r^{\kappa T - (\beta/2)(1 + \kappa)} \cdot |\xi|^{-\beta/2}.
    \end{split}
    \end{equation}
    If we choose $T > (\beta/2)(1 + 1/\kappa)$, then as $r \to 0$, $r^{\kappa T - (\beta/2)(1 + \kappa)} \to 0$. Thus we conclude from \eqref{equationGGUSCCCYVSSXX998723} that if $N$ is sufficiently large, then for $|\xi| \geq (1/r)^{1 + \kappa}$
    \begin{equation} \label{equationBBCDSGDCC77}
        |\widehat{f}(\xi)| \leq 2 \delta |\xi|^{-\beta/2} \leq 2 \delta |\xi|^{- \gamma/2}.
    \end{equation}
    Combining \eqref{equationOIJVIOJOIJ21314}, \eqref{equationaJJDIWJDIWJDIWJIDJW44141} and \eqref{equationBBCDSGDCC77} shows that if $N$ is sufficiently large,
    \begin{equation} \label{equationCIOJCIOJAOCIJAWDI41412421}
        \| f \|_{M(\gamma)} \leq 2 \delta.
    \end{equation}
    Intuitively, if $\delta \ll 1$, then the Fourier transform of $f$ approximately looks like the Dirac delta function at the origin in $\TT^d$, so we should expect $f \approx 1$ on $\TT^d$. In particular, we should expect that $f \mu_0 \approx \mu_0$. Since $\text{supp}(f \mu_0) \subset \text{supp}(f) \subset N(S,r)$, we know that $\text{supp}(f \mu_0) \in \mathcal{A}$. Carrying out all these details numerically will complete the proof of density.

    We start by applying Lemma \ref{LemmaTTSICICS} using \eqref{equationCIOJCIOJAOCIJAWDI41412421}, which implies that if $\rho = f \mu_0$, then
    \begin{equation}
        \| \rho - \mu_0 \|_{M(\gamma)} \lesssim_{d,\mu_0} \| f \|_{M(\gamma)} \leq 2 \delta.
    \end{equation}
    Using \eqref{equationCIOJCIOJAOCIJAWDI41412421} and the fact that $\widehat{\mu}_0 \in L^1(\ZZ^d)$ because $\mu_0 \in C^\infty(\TT^d)$, we find that
    \begin{equation} \label{equationFIOJCIOWJCOIJFIO}
    \begin{split}
        \rho(\TT^d) &= (\widehat{f} * \widehat{\mu}_0)(0) \geq 1 - \sum_{|\xi| \neq 0} |\widehat{f}(\xi)| |\widehat{\mu}_0(-\xi)| \geq 1 - O_{\mu_0}(2 \delta).
    \end{split}
    \end{equation}
    Thus if we define $\mu = \rho / \rho(\TT^d)$, then for $\delta \leq 1$,
    \begin{equation} \label{equationCIOJCOISJCOIJDIOJEO13123}
    \begin{split}
        \| \mu - \mu_0 \|_{M(\gamma)} &\leq \| \mu - \rho \|_{M(\gamma)} + \| \rho - \mu_0 \|_{M(\gamma)}\\
        &= (1 / \rho(\TT^d) - 1) \cdot \| \rho \|_{M(\gamma)} + \| \rho - \mu_0 \|_{M(\gamma)}\\
        &\lesssim_{\mu_0} \delta \| \rho \|_{M(\gamma)} + \| \rho - \mu_0 \|_{M(\gamma)}\\
        &\lesssim \delta \| \mu_0 \|_{M(\gamma)} + \| \rho - \mu_0 \|_{M(\gamma)} \lesssim \delta.
    \end{split}
    \end{equation}
    If we take $\delta$ suitably small, \eqref{equationCIOJCOISJCOIJDIOJEO13123} implies that $\| \mu - \mu_0 \|_{M(\gamma)} \leq \delta_0$. Since $\text{supp}(\mu) \subset \text{supp}(\mu_0)$, Lemma \ref{LemmaTAOIAWOIDJ12301} implies that if $\delta$ is taken even smaller, then $d_{\mathbb{H}}(E, E_0) \leq \delta_0$. Thus if we set $E = \text{supp}(\mu)$, then $E \in \mathcal{A}$ since $E \subset N(S,r)$ and $\mathcal{A}$ is downward closed, and since $\delta_0$ and $\gamma$ were arbitrary, this completes the proof of density.
\end{proof}

\bibliographystyle{amsplain}
\bibliography{FourierDimensionNonlinearPatterns}





\end{document}